\newcommand{\blankforrootedtree}{\rule{0pt}{0pt}}
\NewDocumentCommand\rootedtree{o}{\begin{forest}
    for tree={grow'=90, thick, edge=thick, l sep=0.5ex, l=0pt, s sep=0.5ex},
    delay={
      where content={}{
        for children={no edge, before drawing tree={for tree={y-=5pt}}}
      }
      {
        where content={o}{content={\blankforrootedtree}, whitenode}{
          where content={.}{content={\blankforrootedtree}, blacknode}{}
        }
      }
    }
    [#1]
\end{forest}}
\DeclareMathOperator*{\argmin}{argmin}
\DeclareMathOperator*{\sgn}{sgn}
\newcommand{\ds}{\displaystyle}
\newcommand{\nexto}{\kern -0.54em}
\newcommand{\dR}{{\rm {I\ \nexto R}}}
\newcommand{\dZ}{{\cal Z \kern -0.7em Z}}
\newcommand{\dC}{{\rm\hbox{C \kern-0.8em\raise0.2ex\hbox{\vrule
height5.4pt width0.7pt}}}}
\newcommand{\dQ}{{\rm\hbox{Q \kern-0.85em\raise0.25ex\hbox{\vrule
height5.4pt width0.7pt}}}}
\newcommand{\proofbox}{\hspace{\fill}{$\Box$}}
\newtheorem{lemma}{Lemma}
\newtheorem{theorem}{Theorem}
\newtheorem{proposition}{Proposition}
\newtheorem{remark}{Remark}
\renewenvironment{proof}{Proof.}{$\natural$}
\begin{document}


\author*[1]{\fnm{Robert M.} \sur{Corless}}\email{rcorless@uwo.ca}

\author[2]{\fnm{C. Yal{\c c}{\i}n} \sur{Kaya}}\email{ yalcin.kaya@adelaide.edu.au}
\equalcont{These authors contributed equally to this work.}
\affil*[1]{\orgdiv{Department of Computer Science}, \orgname{The University of Western Ontario}, \orgaddress{\city{London}, \postcode{N6A 5B9}, \state{Ontario}, \country{Canada}}}

\affil[2]{\orgdiv{ School of Mathematical Sciences}, \orgname{ Adelaide University}, \orgaddress{ \city{ Mawson Lakes}, \postcode{ 5095}, \state{ South Australia}, \country{ Australia}}}

\title{Minimizing Residuals in ODE Integration Using
Optimal Control}

\date{\today}

\maketitle

\thispagestyle{empty}

\begin{abstract}

Given the set of discrete solution points or nodes, called the skeleton, generated by an ODE solver, we study the  problem of fitting a curve passing through the nodes in the skeleton minimizing a norm of the residual vector of the ODE.  We reformulate this interpolation problem as a multi-stage optimal control problem and, for the minimization of two different norms, we apply the associated maximum principle to obtain the necessary conditions of optimality.  We solve the problem analytically for the Dahlquist test problem and a variant of the leaky bucket problem, in terms of the given skeleton.  We also consider the Van
der Pol equation, for which we obtain interpolating curves with minimal residual norms by numerically solving a direct discretization of the problem through optimization software.  With the skeletons obtained by various ODE solvers of {\sc Matlab}, we make comparisons between the residuals obtained by our approach and those obtained by the {\sc Matlab} function deval.
\end{abstract}

\keywords{ODE integration, ODE residual, ODE defect, Optimal control, Dahlquist test problem}



\section{Introduction}
\subsection{Background}
The numerical solution of differential equations, both initial-value problems and boundary-value problems, is by now a very old subject.  In particular, the subject underwent significant development in the 1960's and 1970's as computers gained in speed and power.
Much of that development has been captured in the classic texts by Hairer, N{\o}rsett and Wanner~\cite{HaiNorWan1993}, Hairer and Wanner~\cite{HaiWan1996}, Butcher~\cite{Butcher1987}, and for boundary-value problems, Ascher, Mattheij and Russell~\cite{AscMatRus1995}; not forgetting the earlier work by Stetter~\cite{Stetter1973}; or the myriads of other texts.

Yet some pieces of that development were not captured in those texts, or not wholly, or if they were, they may lie undisturbed as the texts (such as Stetter's) fall out of use.  An example of that is the relationship of {\em defect} (what we will call the {\em residual} in this paper, following Corless and Fillion~\cite{CorFil2013}) to the ``local error per unit step," which is mentioned in Stetter's book but nowhere else (outside of, say, Wayne Enright's lecture notes~\cite{Enright}) that we know of.

Another item of interest is that some design decisions for the methods we use today were taken in a time when computer memory was much more constrained than it is now, and moreover were taken at a time when the relationship between forward error (called ``global error" in the numerical ODE community) and backward error (e.g., the residual) was not as widely understood as perhaps it ought to have been.

Another factor is that modern codes use heuristics for variable stepsize and variable order, and the heuristics and controls for stepsize introduce nonlinearities and feedback into the solution.  There is a tension involved: the error control uses theoretical results that are asymptotically correct as the stepsize~$h$ goes to zero, but for efficiency the code is designed to use stepsizes~$h$ that are as large as possible, consistent with the tolerances.  It is possible, especially for loose tolerances, that the solution is not as accurate as the theoretical model underlying the code predicts that it will be.  Even linear ODE can have numerical solutions whose behaviour is difficult to predict, or even to explain afterwards.  One important motive for this paper is to provide another set of tools for ``retrospective diagnostics'' of what the code actually solved.

A final motive for this paper and perhaps the most salient one is a question of validation, reproducibility, of what we call ``computational epistemology.''  Just because a numerical method and its implementation has been tested on problems from test sets\footnote{Larry Shampine argues in~\cite{Shampine1986} that a ``thoughtful consideration of truncation error coefficients provides more detail and more reliable conclusions than does the study of a battery of numerical tests$\ldots$'' and, while we agree, we point out that most users of modern software simply do not have the expertise to do this consideration themselves. We wish to provide simpler tools, for general use.}, say, A, B, C and D, it does not necessarily follow that the methods will work on problems from classes E, F, or G, or, say, Z.  Yet people rely on the results of widely---and freely---available solvers, sometimes blithely, and sometimes blindly.  We are aware of published papers containing incorrect results obtained with such na{\"\i}ve reliance---some examples are discussed in \cite{ShaRei1997}.  Given the ubiquity of ODE solvers nowadays, what can a skeptical user do to address the validity of the computed numerical solution?
\subsection{A motivating example: The Lorenz system}
In the seminal and magisterial 1963 paper ``Deterministic Nonperiodic Flow'' E.~N.~Lorenz showed that numerical methods applied to the system
\begin{align}
    \frac{dx}{dt} &= \sigma(y-x) \nonumber\\
    \frac{dy}{dt} &= x(\rho-z) - y \nonumber\\
    \frac{dz}{dt} &= xy - \beta z \label{eq:Lorenz}
\end{align}
with an initial condition such as $x(0)=z(0)=0$ and $y(0)=1$ demonstrated bounded nonperiodic solutions for certain parameter values~\cite{Lorenz1963}.  His numerical work was careful and rigorous, using a Lyapunov function approach to demonstrate that his chosen numerical method gave bounded solutions for all small enough stepsizes.

The method he chose is known by various names: he called it the double-approximation procedure, but it is also known as Heun's method, the explicit trapezoidal rule, the improved Euler method, or the modified Euler method.  It is one of a class of 2nd order Runge--Kutta methods, and more than a hundred years old at this time of writing; when Lorenz used it, it was more than fifty years old.

But as Lorenz effectively established and is now very well-known, the Lorenz equations have \textsl{chaotic} solutions~\cite{Sparrow:1982,tucker2002rigorous}. Standard theorems which rely on Lipschitz constants to guarantee that the output of numerical methods for ODE will converge to the correct solution on a given interval $[0,T]$ as the timestep goes to zero, fail in the chaotic context because the solutions are exponentially sensitive to changes in the initial conditions and therefore also sensitive to truncation errors (as pointed out by~\cite{Young1966} for the Lorenz system) and even to the much smaller rounding errors induced by computation in floating-point arithmetic.  Errors grow like $\exp(L t)$ where $L$ is the Lipschitz constant for the problem.

Nevertheless, Lorenz was right, and his numerical simulations were entirely reliable.  One explanation for that reliability was given in~\cite{Corless1994} and more recently in~\cite{corless2025numerical}, and is that a good numerical method gives you the exact solution of a nearby problem. One then has to worry about the effects of changes to the problem, but then one \textsl{always} has to worry about that, and tools for assessing the \textsl{sensitivity} of solutions of ODE are well-known. In this case the \textsl{trajectories} are very sensitive, but the attracting set itself is not. In fact, the attracting set is remarkably stable to persistent disturbances of the Lorenz system.

If we write the Lorenz system in vector form as $\dot y = F_f(y)$ then the trapezoidal rule gives the solution of
\begin{equation}
    \dot{z} = F_f(z) + r(t)
\end{equation}
for some \textsl{residual} $r(t)$; a more careful analysis shows that it gives the solution of the \textsl{modified equation}
\begin{equation}
     \dot{z} = F_{f}\mathopen{}\left( \rootedtree[.] \right)\mathclose{} + \frac{1}{3} h^{2} F_{f}\mathopen{}\left( \rootedtree[.[.[.]]] \right)\mathclose{} + \frac{1}{12} h^{2} F_{f}\mathopen{}\left( \rootedtree[.[.][.]] \right)\mathclose{} + \delta(t) \label{eq:modifiedLorenz}
\end{equation}
for some other residual $\delta(t)$ which will, asymptotically as the time-step $h \to 0+$, be $O(h^3)$ in size.

The (perhaps unfamiliar) notation using rooted trees indicates certain partial derivatives of the vector function $F_f$, which we do not need to understand in detail for the purposes of this paper, although we will use it again later. For an explanation of this notation, see e.g.~\cite{butcher2021b}, or~\cite[pp.~144--150]{HaiNorWan1993}.

This present paper focuses on the residuals $r(t)$ and $\delta(t)$ in an effort to obtain guarantees about their smallness.  The idea is that the smaller the residual is, the more confident one is in the quality of the numerical solution, and the more one is able to rely on it for analysis.
\subsection{Setting}
We believe that one tool that can be helpful is explicit independent computation of the residual (also known as the \textsl{defect}, especially in the older literature).  If our differential equation is $\dot y = f(t,y)$ where $y: t \mapsto \mathbb{R}^N$ and $f: (t,y) \mapsto \mathbb{R}^N$,
 then a putative solution to the differential equation (call it $z(t)$) would have a \textsl{residual} $r(t)$ defined at all but exceptional points by
\begin{equation}
    r(t) := \dot z(t) - f(t, z(t)) \label{eq:residual}
\end{equation}
which trivially rearranges to show that $z(t)$ is the \textsl{exact} solution of the perturbed equation
\begin{equation}
    \dot z(t) = f(t,z(t)) + r(t)\>. \label{eq:perturbed}
\end{equation}

This can then often be {\em interpreted} as a meaningful perturbation of the model equations.  If this is small compared to real effects that have been neglected, then the computed solution can be regarded as valid.  The effects of neglecting small perturbations---real, or numerical---of course have to be understood, perhaps by using variational analysis, the theory of conditioning of ODE, or even the Gr{\"o}bner--Alexeev nonlinear variation of constants formula~\cite[p.~98]{HaiNorWan1993}, but that has to be done {\em anyway}.  For reference, the Gr{\"o}bner--Alexeev nonlinear variation of constants formula is, assuming that the initial condition $y(t_0)=y_0$ is not perturbed,
\begin{equation}
    y(t)-z(t) = \int_{\tau=t_0}^t G(t,\tau)r(\tau)\,d\tau \>. \label{eq:GroebnerAlexeev}
\end{equation}
The function $G(t,\tau)$, whose existence the theorem guarantees, is essentially $\partial y /\partial y_0$, the dependence of the solution on the initial conditions.  This acts as a kind of ``condition number'' for the differential equation.  If $G$ is large, then even a small residual can have a dramatic effect on the solution of the differential equation.  This formula also shows that if the residual is $O(h^p)$ in size as $h \to 0$, then so is the forward (or ``global'') error $O(h^p)$, with a constant that is changed by the size of $G$ (hence the interpretation of $G$ as a kind of ``condition number'').

An alternative (and equivalent) method that is frequently used (see, e.g.~\cite{calver2017numerical} and the references therein) is to simultaneously solve the so-called \textsl{adjoint equations} $\dot{\xi} + J_f^T(y) \xi = 0$ and use the growth of the vector $\xi$ to estimate how sensitive the solutions of the ODE are to changes in its initial conditions or to persistent disturbances.

Notice that the Gr\"obner--Alexeev formula and the adjoint equations also allows to predict the effect of physical disturbances, not just the effect of approximate computation.
By putting the approximations of the numerical method on the same footing as that of meaningful modelling choices, we will have succeeded in validating the numerical computation. This is in contrast with most standard textbook explanations of why numerical methods work, which use Lipschitz constants or one-sided Lipschitz constants to prove that the solutions will converge to the reference solution on $[0,T]$ as the tolerance (or stepsize) goes to zero.  As we saw with the Lorenz equations, such an explanation can fail in the chaotic case because the numerical solutions would not converge until the stepsize was exponentially small, which would make the solution unaffordable.  And yet Lorenz' numerical solution using $h=0.01$ was perfectly valid.  One would like to have a ready explanation for this success.

This paper advocates for an independent method to give such an explanation.  Given a putative skeleton of a numerical solution to an ODE, we ask for the {\em minimal perturbation} of the ODE that would fit the numerical skeleton.  If this minimal solution is ``too large" in size, we reject the solution.  If it is ``small enough," we accept it.  These terms are loose and depend on the problem context and its structure, and so we advocate giving tools to the users to allow them to decide for themselves.  In this paper, we formulate several such perturbations and show how to use techniques from optimal control theory to find minimal ones. Knowing the minimal possible residual gives access to \textsl{sufficient} conditions for accepting the numerical solution as valid.

It turns out that the problem formulation, and the problem scaling, have a great impact on the interpretation of the residual.  We presume in this paper that a proper scaling has been done, and moreover that the ODE is reasonable to pose as a system of first order equations.  Most codes require higher-order systems to be rewritten as a first-order system\footnote{A referee reminded us that we had forgotten the important case of second-order systems, where special methods such as Runge--Kutta--Nystr\"om methods are available~\cite[pp.~260--275]{HaiNorWan1993}.}, typically using the standard chain of new variables $y_1 = y$, $y_2 = y_1'$, $y_3 = y_2'$, and so on; in these equations, if they are artificial and introduced only to make an existing code applicable, a residual or defect can be quite hard to interpret, because the standard chain equations do not really permit perturbation.  For the purposes of this paper, we ignore this concern. In a future paper we examine optimal residuals for higher-order equation systems.
Interestingly, the issue of the numerical difference between solving an ODE rewritten as a first order system versus solving it in its original higher-order formulation is discussed only, so far as we know, in Section 11.1 of the classic text~\cite{AscMatRus1995} and nowhere else in the literature; but we leave it for now.

The problem of our interest can be viewed as a problem of interpolation, that is, the problem of finding (in some sense) the ``best solution curve'' that passes through the nodes in the numerical skeleton computed by some ODE solver.  Most modern codes choose to use \textsl{polynomial} approximations valid between the nodes of the skeleton, which are designed to be ``asymptotically correct,'' in the limit as the stepsize $h$ goes to zero.  Typically we would want the approximation to an $O(h^p)$ accurate numerical solution to \textsl{also} be $O(h^p)$ accurate, for ``small enough'' $h$.

But for efficiency, one wants $h$ to be ``as large as possible'' consistent with the error tolerances, and we therefore cannot be sure that the code is operating ``in the asymptotic regime'' and indeed there are documented cases where it was not.  There is also the phenomenon of ``order reduction'' (see \cite{HaiWan1996}) to consider, in which the observed order of convergence as $h$ goes to zero is smaller than the expected $p$th order.  The reasons for that reduction in any given problem can be quite obscure, and even specialists can be confused about it.  We would like to provide tools for retrospective diagnostics that can reassure the user that the code, even if not operating in the asymptotic regime, and even if suffering from order reduction, still produced a good solution. To do this we will relax the idea of using piecewise polynomial approximation.

We will, however, tighten the notion of ``approximation'' to be that of ``interpolation.'' It turns out that not all polynomial approximations supplied by modern codes actually interpolate the skeleton.  Some do, for example the Matlab code \texttt{ode45}, which is supplied with a piecewise polynomial interpolant that is actually continuously differentiable (at least in the absence of rounding error).  Others are not: for instance, the Matlab code \texttt{ode113} comes with a piecewise polynomial approximation that is continuously connected to the skeleton only at one node of two (the right endpoint). The lack of continuity at the left is not a difficulty if all one wants from the approximation is dense output for a graph, and indeed the piecewise polynomial is ``almost'' continuous there, having a jump discontinuity of size similar to the tolerance supplied to the integration.  But for other uses, this lack of continuity is, as Shampine says in~\cite{Shampine1986}, a serious drawback.

\subsection{Contributions}
We will insist, in this paper, on a piecewise interpolant that is continuous at every interior node of the skeleton.  It will turn out that we cannot insist on the interpolant being continuously differentiable\footnote{Continuously differentiable interpolants are desirable for several purposes, and sometimes essential, and this is emphasized in~\cite{shampine1985interpolation} and again in~\cite{EnrJacNorTho1986}. However, in this paper we give up $C^1$ continuity to gain minimality, which allows us to give sufficient conditions for accuracy.  There are yet other criteria for ``good'' interpolation, such as preserving monotonicity or convexity to ensure a pleasing visual appearance, and indeed a ``not misleading'' visual appearance, but those considerations are also less important for the purposes of this paper.}, but the $C^0$ continuity will turn out to be crucial.

The reason that we insist our piecewise interpolant be globally $C^0$ is that if we permit discontinuous approximate solutions, then approximation by ``local exact solutions'' will have residual $r(t)$ that is identically zero.  That is, if we approximate $y(t)$ by the piecewise-defined function $y(t) = y_k(t)$ on $t_k \le t \le t_{k+1}$ with $\dot y_k(t)=f(t,y_k(t))$ and either $y(t_k) = y_k$ or, perhaps, $y(t_{k+1}) = y_{k+1}$ at the other end, then $r(t)$ will be zero in $(t_k,t_{k+1})$ but a delta function at one end.  Putting that $r(t)$ into equation~\eqref{eq:GroebnerAlexeev} produces a sequence of jumps at the node; this is a classical way of relating the ``local error'' of the method with the ``global error'' in the computed skeleton.

Instead we insist on a globally continuous interpolant to the skeleton, and ask for the interpolant that gives the ``minimal residual'' in various senses.

The interpolant in our case is required to minimize a measure of the residual; in particular, we choose to minimize the $L^2$-norm or some ``convenient'' variant of the $L^\infty$-norm of the residual, resulting in two variational interpolation problems.  The particular variant mentioned here is simply the sum of the $L^\infty$-norms taken over
each {\em stage} (defined as the interval between any two consecutive nodes\footnote{This is a natural usage for the word ``stage'' in the optimal control literature.  Later in this paper we also use the word ``stage'' in a different sense, one more common in the Runge--Kutta community.  This collision of meanings, this polysemy, is unfortunate, and may cause confusion.}).  This norm variant helps in getting analytical solutions for some relatively simple example ODEs that we consider in this paper.  We call this variant the {\em stage $L^\infty$-norm}.

It is well known that  {\color{black} interpolation} problems can be reformulated as an optimal control problem, and a specialized maximum principle (for multi-stage or multi-process optimal control) can be employed with the aim of finding a solution; see, for example, \cite{KayNoa2013, Kaya2019} and the references therein. We subsequently establish in Lemmas~\ref{L2_lambda0} and \ref{Linf_lambda0} that the respective optimal control problems we derive in the current paper are normal, and in Theorems~\ref{theo:L2} and \ref{theo:Linf} we present the resulting necessary conditions of optimality.

The Dahlquist test problem is conceivably the simplest possible and yet rich enough ODE that is widely used to analyze certain properties of ODE solvers, such as stability; see, for example, \cite{CorKayMoi2019} and the references therein.  We obtain closed-form expressions for the optimal residuals of the Dahlquist test problem in Propositions~\ref{simp_fact_L2} and \ref{simp_fact_Linf}.  These expressions also allow us to make comparisons between the two different norms of the residual in Proposition~\ref{prop:relation}.

We consider another relatively simple but sufficiently interesting ODE, which is a variant of the celebrated ``leaky bucket problem'' (see \cite{CorJan2016}), and obtain analytical expressions for the optimal residuals in Propositions~\ref{prop:sqrt_L2} and \ref{prop:sqrt_Linfty}.

The third example we study is the Van der Pol system, which is not tractable analytically; however, we discretize the ensuing optimal control problem directly~\cite{Betts2020} and numerically solve the resulting large-scale optimization problem using the mathematical programming modelling language AMPL~\cite{AMPL} paired with the popular optimization software Knitro~\cite{Knitro}.  We gain some insight from the numerical solutions for the optimal residual.  In each of the three examples mentioned above, we use a suite of {\sc Matlab}'s ODE solvers~\cite{Matlab} to construct the numerical skeleton.

The paper is organized as follows.  In Section~\ref{sec:motivation} we amplify our motivational remarks above by a detailed consideration of a single example.  In Section~\ref{sec:formulation}, we formulate the problem of finding interpolants minimizing the $L^2$- and stage $L^\infty$-norms and transform these problems into optimal control problems.  In Section~\ref{sec:nec_cond}, we apply a maximum principle to obtain the necessary conditions of optimality, after showing that both optimal control problems are normal.  We study the three example ODEs in Section~\ref{sec:examples}, also presenting numerical experiments. We briefly discuss the experiments in Section~\ref{sec:discussion}. Finally, in Section~\ref{sec:conclusion}, we provide some concluding remarks.

\section{Motivation}\label{sec:motivation}
\begin{quote}
  ``$\ldots$ in theory, there is no difference between theory and practice,
    while in practice, there is.''\\
     ---Benjamin Brewster, \textsl{The Yale Literary Magazine}
    October 1881/June 1882 issue
\end{quote}
One motivation for this paper is that the polynomial approximations extending the skeleton to ``continuous'' functions used in current codes can behave suboptimally. We demonstrate this with an example, the numerical solution of the simple harmonic oscillator $\ddot y + y = 0$ on a representative time interval with representative initial conditions.

Consider the following piece of {\sc Matlab} code, which solves the simple harmonic oscillator $\ddot y + y = 0$ on $0 \le t \le 30$ with $y(0)=3.5$ and $\dot y(0) = 0$ using many different tolerances, and compares the maximum residual error (which we define in Section~\ref{sec:formulation}, but note that this is \textsl{not} the forward or global error, which would require us to know the reference solution to the equation, which we do not use here) to the \textsl{mean} stepsize that the code takes.  The mean stepsize tells us something directly about the amount of work that the code does, because the work is proportional to the number of steps taken. Note that in the papers~\cite{corless2000elementary,corless2002new,ilie2008adaptivity} we find an explanation of the experimentally observed fact that modern stepsize heuristics, which approximately equidistribute the local error, can usually be expected to produce solutions which are accurate to $O(h^p)$ where $h$ is the arithmetic mean of all the stepsizes taken and $p$ is the order of the method.  Most texts say that it is the \textsl{maximum} stepsize which must be used, but this is not true: for the codes to be asymptotically correct, only the mean stepsize needs to go to zero, given the assumption of equidistribution.  Therefore, this script will produce a graph, analogous to a ``work-precision diagram,'' which will tell us something about the behaviour of the error in solving the simple harmonic oscillator with \texttt{ode45}, and in particular will tell us how the polynomial interpolant behaves.

The results are plotted in Figure~\ref{fig:shoode45}.  While a na{\"\i}ve user might have expected that the observed error would decrease as the mean stepsize decreases, over the whole range of tolerances, we see hints that for more complicated problems this might not really be so for large tolerances, only for ``small enough'' tolerances.  We also see that the order of accuracy is not $O(h^5)$ (the red curve) but rather seems to be reduced to $O(h^4)$ (the blue curve), once the error starts to decrease.  Neither of these behaviours is likely to be expected by a na{\"\i}ve user.

The behaviour of modern codes is not always simple, and therefore we believe we should provide more tools for retrospective diagnostics.

In fairness to \texttt{ode45}, we note that it was not designed to achieve small \textsl{residual} error, but instead to have small \textsl{local} error for modest tolerances.  The two notions are related, but not identical. We remark that residual error is easier to explain to students and users of the code, and is useful in other contexts as well, while the concept of ``local error'' is useful only in the context of historical codes for the solution of ODEs.
\begin{figure}
    \centering
    \includegraphics[width=0.6\linewidth]{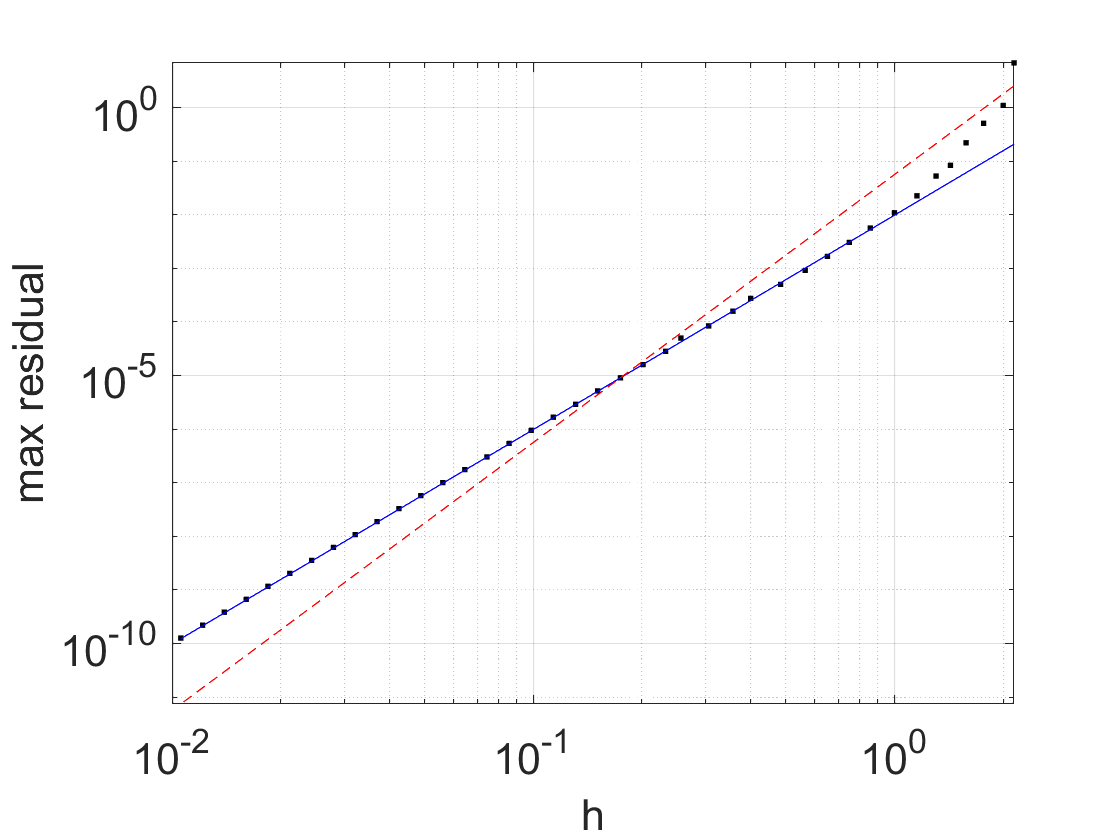}
    \caption{A kind of work-precision diagram for the solution of the simple harmonic oscillator $\ddot y + y = 0$ on $0 \le t \le 30$ with reasonable initial conditions, given to \texttt{ode45} in the form of a first-order system as usual.  On the horizontal axis we have the mean stepsize $h$. The code \texttt{ode45} is a variable-stepsize method, and the stepsize at the tightest tolerance varied systematically between $9\times 10^{-3}$ and $1.2\times 10^{-2}$.  Smaller mean~$h$ implies that the code had to work harder to get to $t=30$.  On the vertical axis, we have the maximum residual norm observed over the solution on $0 \le t \le 30$.  We see that the expected order of the error, namely $O(h^5)$, plotted in red, is not attained; instead, the observed order of the residual error is very clearly $O(h^4)$ (plotted in blue).}
    \label{fig:shoode45}
\end{figure}

\begin{lstlisting}
%
% estimate the order of the residual error by Matlab's
% built-in interpolant from ode45
%
nsamp = 40; % 2^(-40) is about 1.0e-12
err = zeros(1,nsamp);
h = zeros(1,nsamp);
for k=1:nsamp
    disp(k)
    opts = odeset('RelTol',2.^(-k),'AbsTol',2.^(-k-1));
    sol = ode45( @sho, [0, 30], [3.5;0], opts);
    h(k) = mean(diff(sol.x));
    t = RefineMesh(sol.x, 8);
    [y,dy] = deval(sol,t);
    [dx,nt] = size(t);
    res = zeros(size(y));
    for j=1:nt
      res(:,j) = dy(:,j)-sho(t(j),y(:,j));
    end
    err(k) = max(abs(res(:)));
end
k2 = floor(nsamp/2);
const4 = err(k2)/h(k2)^4;
const5 = err(k2)/h(k2)^5;
figure(1), loglog( h, err, 'k.', ...
    h, const4*h.^4, 'b-', ...
    h, const5*h.^5, 'r--')
xlabel('h'), ylabel('max residual')
\end{lstlisting}
\par\noindent
The function \lstinline{sho} is
\begin{lstlisting}
function dydt = sho(t,y)
  dydt = [y(2); -y(1)];
end
\end{lstlisting}
and for the convenience of the reader, the \lstinline{RefineMesh} routine from~\cite{CorFil2013} is available at
\url{http://nfillion.com/coderepository/Graduate_Introduction_to_Numerical_Methods/RefineMesh.m}.
The question arises, then, what should we do in order to understand what the numerical solver has done?  We advocate in this paper that we should not rely solely on the interpolants provided by the code (which are not independent of the method used) but instead do a separate assessment of the ``\textsl{minimal possible}'' maximum residual.
\section{Interpolants Minimizing a Measure of the Residual}
\label{sec:formulation}

The problem is to find an interpolating curve $x:[t_0,t_f]\to\dR^n$ passing through a set of points $z_i$ in $\dR^n$, called the {\em skeleton}, generated at $t=t_i$, $i = 0,\ldots,N$, where $t_0 < t_1 < \cdots < t_N := t_f$, using an ODE solver for the initial value problem $\dot z(t) = f(z(t),t)$,\ \ $z(t_0) = z_0$.  We assume that $z_i \neq z_{i-1}$, $i = 1,\ldots,N$.  The interpolating curve is required to minimize some measure, for example, the $L^2$- or the $L^\infty$-norm, of the {\em residual} $(\dot{z}(t) - f(z(t),t))$.

\subsection{Reformulations of the minimum residual problems}

The problem of {\em $L^2$-minimization}, or minimization of the square $L^2$-norm, of the residual by a curve $x(t)$ interpolating the skeleton $\{z_0,z_1,\ldots,z_N\}$ can be written as
\[
\mbox{(PL2) }\left\{\begin{array}{rl}
\ds\min & \ \ \ds\frac{1}{2}\,\|\dot x(t) - f(x(t),t)\|_{L^2}^2 = \frac{1}{2}\,\sum_{i=1}^{N} \int_{t_{i-1}}^{t_i} \|\dot x(t) - f(x(t),t)\|^2\,dt \\[6mm]
\mbox{subject to} & \ \ x(t_i) = z_i\,,\ \ i = 0,\ldots,N\,,
\end{array} \right.
\]
where $\|\cdot\|$ is the Euclidean norm in $\dR^n$.  Define the {\em control variable} $u:[t_0,t_f]\to\dR^n$ such that $u(t):=\dot{x}(t)-f(x(t),t)$.  Now Problem~(PL2) can  equivalently be re-written as an optimal control problem as follows.
\[
\mbox{(PL2a) }\left\{\begin{array}{rl}
\ds\min & \ \ \ds\frac{1}{2}\,\sum_{i=1}^{N} \int_{t_{i-1}}^{t_i} \|u(t)\|^2\,dt \\[5mm]
\mbox{subject to} & \ \ \dot{x}(t) = f(x(t),t) + u(t)\,,\mbox{ a.e. } t\in[t_0,t_f]\,, \\[2mm]
& \ \ x(t_i) = z_i\,,\ \ i = 0,\ldots,N\,,
\end{array} \right.
\]
where $x(t)$ is referred to as the {\em state variable}. We note that an optimal control~$u$ can be indeterminate at isolated points in $[t_0,t_f]$; therefore, the ODE in Problem~(PL2a) is stated for a.e. $t\in[t_0,t_f]$.  
We call the {\em time interval $[t_{i-1}, t_i]$}, $i = 1,\ldots, N$, the {\em $i$th stage}.

As can be seen in Problem~(PL2), the square $L^2$-norm can be written as the sum of the square $L^2$-norms in each stage.  Subsequently, we define the problem of {\em stage $L^\infty$-minimization}, or minimization of the {\em stage} $L^\infty$-norm, of the residual by defining the objective functional (similarly) as the sum of the $L^\infty$-norms in each stage, since this will eventually allow us to obtain closed-form expressions for the optimal residuals of some of the example problems we investigate.  We point out that this sum of the $L^\infty$-norms in stages $[t_{i-1}, t_i]$ is obviously different from the (classical) $L^\infty$-norm over the whole interval $[t_0, t_f]$, with which it is not possible to obtain any analytical solution for the optimal residual, and we leave this outside the scope of the present paper.  So, we alter Problem~(PL2a) slightly to get the stage $L^\infty$-minimization problem as follows.
\[
\mbox{(PLinf) }\left\{\begin{array}{rl}
\ds\min & \ \ \ds\sum_{i=1}^{N}\,\max_{t_{i-1}\le t\le t_i} \|u(t)\|_\infty \\[5mm]
\mbox{subject to} & \ \ \dot{x}(t) = f(x(t),t) + u(t)\,,\mbox{ a.e. } t\in[t_0,t_f]\,, \\[2mm]
& \ \ x(t_i) = z_i,\ \ i = 0,\ldots,N\,,
\end{array} \right.
\]
where $\|\cdot\|_\infty$ is the $\ell_\infty$-norm in $\dR^n$.  Now, using a standard reformulation in mathematical programming (see, for example, \cite[p.~307]{NocWri2006} and \cite{KayNoa2013}), Problem~(PLinf) can be re-cast as
\[
\mbox{(PLinfa) }\left\{\begin{array}{rl}
\ds\min & \ \ \ds\sum_{i=1}^{N} \alpha^{[i]} \\[5mm]
\mbox{subject to} & \ \ \dot{x}(t) = f(x(t),t) + u(t)\,,\mbox{ a.e. } t\in[t_0,t_f]\,,\\[2mm]
& \ \ x(t_i) = z_i,\ \ i = 0,\ldots,N\,, \\[2mm]
& \ \ \|u(t)\|_\infty \le\alpha^{[i]}\,,\mbox{ a.e. } t\in[t_{i-1},t_i)\,,\ \ i = 1,\ldots,N\,,
\end{array} \right.
\]
where $\alpha^{[i]} \ge 0$ are new parameters that are to be determined for each stage $i$.  Note that, for a.e.\ $t\in[t_{i-1},t_i]$, $\|u(t)\|_\infty \le\alpha$ if, and only if, $|u_j(t)| \le\alpha^{[i]}$, $i = 1,\ldots,N$, $j = 1,\ldots,n$.  Let $u := \alpha^{[i]}\,v$, for $t\in[t_{i-1},t_i)$, $i = 1,\ldots,N$, where $v:[t_0,t_f]\to\dR^n$ is a new control variable.  Then Problem~(PLinfa) can be rewritten as the following differentiable, or smooth, parametric optimal control problem.
\[
\mbox{(PLinfb) }\left\{\begin{array}{rl}
\ds\min & \ \ \ds\sum_{i=1}^{N} \alpha^{[i]} \\[5mm]
\mbox{subject to} & \ \ \dot{x}(t) = f(x(t),t) + \alpha^{[i]}\,v(t)\,,\mbox{ a.e. } t\in[t_{i-1},t_i)\,,\ i = 1,\ldots,N\,, \\[2mm]
& \ \ x(t_i) = z_i\,,\ \ i = 0,\ldots,N\,, \\[2mm]
& \ \ |v_j(t)| \le 1\,,\mbox{ a.e. } t\in[t_0,t_f]\,,\ \ j = 1,\ldots,n\,.
\end{array} \right.
\]
Problem~(PLinfb) can further be transformed into a more standard or classical form by defining a new state variable, $y(t) := \alpha^{[i]}$, for all $t\in[t_{i-1},t_i)$, $i = 1,\ldots,N$, as follows.
\[
\mbox{(PLinfc) }\left\{\begin{array}{rl}
\ds\min & \ \ds\int_{t_0}^{t_f} y(t)\,dt \\[5mm]
\mbox{subject to} & \ \ \dot{x}(t) = f(x(t),t) + y(t)\,v(t)\,,\mbox{ a.e. } t\in[t_{i-1},t_i)\,,\ i = 1,\ldots,N\,, \\[2mm]
& \ \ x(t_i) = z_i\,,\ \ i = 0,\ldots,N\,, \\[2mm]
& \ \ \dot{y}(t) = 0\,, \\[2mm]
& \ \ |v_j(t)| \le 1\,,\mbox{ a.e. } t\in[t_0,t_f]\,,\ \ j = 1,\ldots,n\,.
\end{array} \right.
\]

\section{Necessary conditions of optimality}
\label{sec:nec_cond}
Both Problems~(PL2a) and (PLinfc) are optimal control problems with {\em point-state constraints}, $x(t_i) = z_i$, $i = 1,\ldots,N-1$, given in the interior of $[t_0,t_f]$.  They can be further transformed into multi-process, or multi-stage, optimal control problems, in a fashion similar to that done for the interpolation problems in~\cite{KayNoa2013,Kaya2019}{\color{black}, as well as for density estimation in~\cite{HegKay2025}}.  The necessary conditions of optimality for general multi-process or multi-stage optimal control problems can be found in~\cite{AugMau2000, ClaVin1989, DmiKag2011}.

We first define a new independent variable $s$ in terms of $t$ to map the ``duration" $\tau_i$ of each {\em stage} $i$ to unity:
\[
t = t_{i-1} + s\,\tau_i\,,\quad s\in[0,1]\,,\quad \tau_i := t_i - t_{i-1}\,,\quad i = 1,\ldots,N\,.
\]
With this definition, the {\em time horizon} of each stage $i$ is rescaled as $[0,1]$ in the new independent (time) variable $s$.  Note that $\tau_i > 0$, $i=1,\ldots,N$.  Let
\[
x^{[i]}(s) := x(t)\,,\ y^{[i]}(s) := y(t)\,\ \mbox{and}\ u^{[i]}(s) := u(t)\,,\ \mbox{for } s\in[0,1],\ t\in[t_{i-1},t_{i}]\,,
\]
for $i = 1,\ldots,N$.  Here $x^{[i]}(s) := (x^{[i]}_1(s),\ldots,x^{[i]}_n(s))\in\dR^n$ denotes the value of the state variable vector $x(t)$ in stage $i$, and other stage variables are to be interpreted similarly. With the usage of stages, it is necessary to ensure continuity of the state variables $x(t)$ at the junction of any two consecutive stages, but this is readily ascertained since $x^{[i]}(1) = x^{i+1}(0) = z_i$, $i = 1,\ldots,N-1$.

\subsection{Necessary conditions of optimality for \boldmath{$L^2$}-minimization}
\label{sec:L2-min}

Using the definitions above, a multi-stage optimal control problem associated with~(PL2a) can now be written as
\[
\mbox{(PL2b)}\left\{\begin{array}{rl}
\min &\ \ \ds\frac{1}{2}\,\sum_{i=1}^{N}\tau_i\int_{0}^{1} \|u^{[i]}(s)\|^2\,ds \\[6mm]
\mbox{s.t.} &\ \ \dot{x}^{[i]}(s) = \tau_i \left(f(x^{[i]}(s),s) + u^{[i]}(s)\right),\mbox{ a.e. } s\in[0,1]\,, \\[2mm]
 &\ \  x^{[i]}(0) = z_{i-1}\,,\ x^{[i]}(1) = z_i\,, \ \ \ i = 1,\ldots,N\,.
\end{array}\right.
\]

In what follows, we will state a maximum principle, i.e., necessary conditions of optimality, for Problem~(PL2b), using \cite[Theorem~3.1 and Corollary~3.1]{ClaVin1989} and \cite[Section~4]{AugMau2000}.  A relevant maximum principle can also be found in~\cite{DmiKag2011}.  First, define the Hamiltonian function for the $i$th stage of Problem~(PL2b) as
\[
H^{[i]}(x^{[i]},\lambda_0,\lambda^{[i]},u^{[i]},s) := \tau_i \left(\frac{1}{2}\,\lambda_0\,\|u^{[i]}\|^2 + (\lambda^{[i]})^T \left(f(x^{[i]},s) + u^{[i]}\right)\right),
\]
where $\lambda_0$ is a scalar (multiplier) parameter, and $\lambda^{[i]}:[0,1]\rightarrow\dR^n$, such that $\lambda^{[i]} := (\lambda^{[i]}_1,\ldots,\lambda^{[i]}_n)$, is the adjoint variable vector (or the vector of multiplier functions) in the $i$th stage.  For neatness of the above expression, the time dependence of the variables has been suppressed.   For further neatness, let
\[
H^{[i]}[s] := H^{[i]}(x^{[i]}(s),\lambda_0,\lambda^{[i]}(s),u^{[i]}(s),s)\,.
\]
First, we recall formally that $L^\infty(0,t_f;\dR^n)$ denotes the space of essentially bounded measurable functions equipped with the essential supremum norm. Furthermore, $W^{1,\infty}(0,t_f;\dR^n)$ is the Sobolev space consisting of functions $x : [0,t_f] \to \dR^n$ whose first derivatives lie in $L^\infty$.  Suppose that $x^{[i]}\in W^{1,\infty}(0,1;\dR^n)$ and $u^{[i]}\in L^\infty(0,1;\dR^n)$, $i = 1,\ldots,N$, solve Problem~(PL2b).  Then there exist a number $\lambda_0\ge0$ and the vector function $\lambda^{[i]}\in W^{1,\infty}(0,1;\dR^n)$ such that\linebreak $\overline\lambda^{[i]}(s):= (\lambda_0,\lambda^{[i]}(s)) \neq \bf0$, $i = 1,\ldots,N$, for every $s\in[0,1]$, and, in addition to the state differential equations and the other constraints given in Problem~(PL2b), the following conditions hold:
\begin{eqnarray}
&& \dot{\lambda}_j^{[i]}(s) = -H^{[i]}_{x^{[i]}_j}[s]\,,\mbox{ a.e. } s\in[0,1],\ \ i = 1,\ldots,N,\ j = 1,\ldots,n\,, \label{L2_lambdax}  \\[1mm]
&& \lambda_j^{i+1}(0) =  \lambda_j^{[i]}(1) + \delta_j^{[i]}\,,\ \ i = 1,\ldots,N-1,\ j = 1,\ldots,n\,, \label{L2_lambda_jump}   \\[1mm]
&& u^{[i]}(s)\in\argmin_{w\in\dR} H^{[i]}(x^{[i]}(s),\lambda_0,\lambda^{[i]}(s),w,s)\,,\ \mbox{a.e. } s\in[0,1]\,,  \label{L2_controli}
\end{eqnarray}
where $H^{[i]}_{x^{[i]}_j} := \partial{H^{[i]}}/\partial{x^{[i]}_j}$, and $\delta_j^{[i]}$, $i = 1,\ldots,N-1$, $j = 1,\ldots,n$, are real constants.

We define the ``overall" adjoint variable vector $\lambda_j(t)$, $j=1,\ldots,n$, to be formed by concatenating the stage adjoint variable vectors as follows.
\begin{equation}  \label{overall}
\lambda(t) := \lambda^{[i]}(s)\,,\quad  t = t_{i-1} + s\,\tau_i,\quad
s\in[0,1]\,,\quad i = 1,\ldots,N\,.
\end{equation}
Condition~\eqref{L2_lambda_jump} states that the adjoint variables $\lambda_j(t)$, $j = 1,\ldots,n$, may have jumps as they go from one stage to the other.

The optimality conditions \eqref{L2_lambdax}--\eqref{L2_controli} can now be re-written rather more explicitly, along with the state equations, as follows.
\begin{eqnarray}
\dot{x}(t) &=& f(x(t),t) + u(t)\,,\quad x(0) = z_0\,,\ x(t_1) = z_1\,,\ldots,\ x(t_N) = z_N\,,\mbox{ a.e. } t\in[0,t_N]\,, \label{L2_x_eqn} \\[1mm]
\dot{\lambda}(t) &=& -f_x(x(t),t)^T\,\lambda(t)\,,\mbox{ for all } t\in[t_{i-1},t_i)\,,\ i = 1,\ldots,N\,,  \label{L2_adjoint_DE} \\[1mm]
\lambda_j(t_i^+) &=&  \lambda_j(t_i^-) + \delta_j^{[i]}\,,\ \ i = 1,\ldots,N-1,\ j = 1,\ldots,n\,, \label{L2_lambda_jump2}   \\[1mm]
\lambda_0\,u(t) &=& -\lambda(t)\,,\mbox{ for all } t\in[t_{i-1},t_i)\,,\ i = 1,\ldots,N\,, \label{L2_optcontrol}
\end{eqnarray}
where $f_x(x(t),t)$ is the Jacobian of $f(x(t),t)$ with respect to $x$.  In the jump condition~\eqref{L2_lambda_jump2}, $\lambda_3(t_i^+) := \lim_{t\to t_i^+}\lambda_3(t)$ and $\lambda_3(t_i^-) := \lim_{t\to t_i^-}\lambda_3(t)$.

The problems which result in $\lambda_0 = 0$ are called {\em abnormal} in the optimal control theory literature, for which the necessary conditions in \eqref{L2_x_eqn}--\eqref{L2_optcontrol} are independent of the objective
functional and therefore not fully informative. The problems that result in $\lambda_0 > 0$ are referred to as {\em normal}, which is the case of Problem~(PL2b) as asserted by Lemma~\ref{L2_lambda0} below.

\begin{lemma}[Normality]  \label{L2_lambda0}
Problem~{\em(PL2b)} is normal.  Subsequently, one can set $\lambda_0 = 1$ for Problem~{\em(PL2b)} and express the optimal control as $u(t) = -\lambda(t)$, for a.e.\ $t\in[t_0,t_f)$.
\end{lemma}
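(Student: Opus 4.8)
The plan is to argue by contradiction, the key observation being that the control $u$ in Problem~(PL2b) is \emph{unconstrained}. Suppose $\lambda_0 = 0$. Then in the $i$th stage the Hamiltonian collapses to the affine-in-control expression $H^{[i]} = \tau_i\,(\lambda^{[i]})^T(f(x^{[i]},s) + u^{[i]})$, and the minimization condition~\eqref{L2_controli} would require $u^{[i]}(s)$ to minimize $w \mapsto \tau_i\,(\lambda^{[i]}(s))^T w$ over all $w\in\dR^n$. Since $\tau_i>0$, a minimizer can exist only when the linear coefficient vanishes, i.e.\ $\lambda^{[i]}(s) = 0$ for a.e.\ $s\in[0,1]$; otherwise the infimum is $-\infty$. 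As $\lambda^{[i]}\in W^{1,\infty}(0,1;\dR^n)$ is continuous, I would conclude $\lambda^{[i]}\equiv 0$ on $[0,1]$ for every $i$, so that $\overline\lambda^{[i]}(s) = (\lambda_0,\lambda^{[i]}(s)) = \mathbf{0}$, contradicting the nontriviality clause $\overline\lambda^{[i]}(s)\neq\mathbf{0}$. Hence $\lambda_0>0$.

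Next I would invoke the positive homogeneity of the necessary conditions in $(\lambda_0,\lambda^{[i]},\delta_j^{[i]})$: scaling this triple by $1/\lambda_0$ leaves the adjoint equation~\eqref{L2_lambdax}, the jump condition~\eqref{L2_lambda_jump} and the minimization condition~\eqref{L2_controli} intact, so one may normalize $\lambda_0 = 1$. With $\lambda_0 = 1$ the stage Hamiltonian is $\tau_i\big(\tfrac12\|u^{[i]}\|^2 + (\lambda^{[i]})^T(f(x^{[i]},s)+u^{[i]})\big)$, which is strictly convex in $u^{[i]}$ with Hessian $\tau_i I$; its unique minimizer is given by the first-order condition $\tau_i(u^{[i]}(s)+\lambda^{[i]}(s)) = 0$, i.e.\ $u^{[i]}(s) = -\lambda^{[i]}(s)$ a.e. Passing back through the concatenation~\eqref{overall} and the change of variable $t = t_{i-1}+s\,\tau_i$ then gives $u(t) = -\lambda(t)$ for a.e.\ $t\in[t_0,t_f)$, which is precisely~\eqref{L2_optcontrol} with $\lambda_0 = 1$.

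I do not expect a serious obstacle. The one step that deserves care — and is really the whole point — is the first: because the residual control is genuinely unconstrained, an affine-in-control Hamiltonian cannot attain a minimum unless its linear coefficient is identically zero, which immediately rules out the abnormal case. This is in contrast with control-constrained problems, where abnormality is a genuine possibility and normality must instead be established by a separate, often more delicate, controllability-type argument.
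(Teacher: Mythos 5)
Your proposal is correct and follows essentially the same route as the paper: assume $\lambda_0=0$, use the minimization of the (then affine-in-$u$, unconstrained) Hamiltonian to force $\lambda\equiv 0$, contradict the nontriviality condition $(\lambda_0,\lambda^{[i]}(s))\neq\mathbf{0}$, and then normalize $\lambda_0=1$ so the first-order condition yields $u=-\lambda$. The paper simply invokes the already-derived stationarity condition~\eqref{L2_optcontrol} at $\lambda_0=0$, which is the same observation you make one step earlier at the level of~\eqref{L2_controli}.
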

\begin{proof}
Suppose that $\lambda_0 = 0$.  Then Condition~\eqref{L2_optcontrol} implies that $\lambda(t) = 0$ for all $t\in[0,t_f]$ and thus $(\lambda_0, \lambda(t)) = 0$, which is not allowed by the maximum principle cited above.  Therefore, $\lambda_0 >0$, and so the problem is normal and that, without loss of generality, one can take $\lambda_0 = 1$.  Then the substitution of $\lambda_0 = 1$ in~\eqref{L2_optcontrol} completes the proof.
\end{proof}

\begin{remark}[Optimal control] \label{rem:L2_opt_eqns}\rm
We re-iterate that, by Lemma~\ref{L2_lambda0}, $u(t) = -\lambda(t)$, i.e., the residual is the negative of the adjoint variable.  Furthermore, by \eqref{L2_lambda_jump2}, the adjoint variable $\lambda$ will typically be discontinuous at $t_i$, $i = 1,\ldots,N-1$; so will be the residual $u$.
\proofbox
\end{remark}

The necessary conditions of optimality for Problem~(PL2), or equivalently those for Problem~(PL2b), which are listed in \eqref{L2_x_eqn}--\eqref{L2_optcontrol} can now be simply written, also using Lemma~\ref{L2_lambda0}, as a two-point boundary-value problem (TPBVP) in each of the $N$ stages, in the following theorem.
\begin{theorem}[$L^2$-residual]  \label{theo:L2}
Suppose that $x(t)$ and $u(t)$ solve Problem~{\rm(PL2)}.  Then they solve
\begin{eqnarray}
&&\dot{x}(t) = f(x(t),t) - \lambda(t)\,,\ \ x(t_{i-1}) = z_{i-1},\ \  x(t_i) = z_i\,, \label{L2_state_eqn} \\[2mm]
&&\dot{\lambda}(t) = -f_x(x(t),t)^T\,\lambda(t)\,, \label{L2_costate_eqn}
\end{eqnarray}
for all $t\in[t_{i-1},t_i]$, $i = 1,\ldots,N$, with possible jumps in the values of the adjoint variable vector $\lambda(t)$ at $t_i$, $i = 1,\ldots,N-1$.
\end{theorem}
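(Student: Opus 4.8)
The plan is to obtain the stated two‑point boundary‑value problem as an immediate specialization of the multi‑stage maximum principle already recorded above for Problem~(PL2b), combined with the normality conclusion of Lemma~\ref{L2_lambda0}. Since (PL2) is equivalent to (PL2a) under the substitution $u(t) := \dot x(t) - f(x(t),t)$, and (PL2a) is in turn equivalent to the time‑rescaled multi‑stage problem (PL2b), any pair $x,u$ solving (PL2) induces, stage by stage via $t = t_{i-1} + s\,\tau_i$, a pair $x^{[i]},u^{[i]}$ solving (PL2b); the maximum principle of \cite{ClaVin1989,AugMau2000} then supplies multipliers $\lambda_0 \ge 0$ and $\lambda^{[i]}$ satisfying~\eqref{L2_lambdax}--\eqref{L2_controli}, equivalently the rewritten conditions~\eqref{L2_x_eqn}--\eqref{L2_optcontrol}.

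First I would invoke Lemma~\ref{L2_lambda0} to take $\lambda_0 = 1$; then~\eqref{L2_controli} asks for the minimizer of the strictly convex quadratic $w \mapsto H^{[i]}(x^{[i]}(s),1,\lambda^{[i]}(s),w,s) = \tau_i\big(\tfrac12\|w\|^2 + (\lambda^{[i]}(s))^T(f(x^{[i]}(s),s)+w)\big)$, whose unique stationary point is $w = -\lambda^{[i]}(s)$, so $u^{[i]}(s) = -\lambda^{[i]}(s)$, i.e.\ $u(t) = -\lambda(t)$ after concatenation by~\eqref{overall} (this is precisely~\eqref{L2_optcontrol} with $\lambda_0 = 1$). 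Substituting $u = -\lambda$ into the stage state equation $\dot x^{[i]}(s) = \tau_i\big(f(x^{[i]}(s),s) + u^{[i]}(s)\big)$ and undoing the rescaling, where $d/ds = \tau_i\,d/dt$ so that the factors $\tau_i$ cancel, yields $\dot x(t) = f(x(t),t) - \lambda(t)$ on each $[t_{i-1},t_i]$, which is~\eqref{L2_state_eqn}; the stage endpoint data $x^{[i]}(0) = z_{i-1}$, $x^{[i]}(1) = z_i$ become the boundary conditions $x(t_{i-1}) = z_{i-1}$, $x(t_i) = z_i$.

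Next I would treat the adjoint equation. Since the quadratic term and the control term in $H^{[i]}$ do not depend on $x^{[i]}$, we have $H^{[i]}_{x^{[i]}_j}[s] = \tau_i\,\big(f_x(x^{[i]}(s),s)^T\lambda^{[i]}(s)\big)_j$; inserting this into~\eqref{L2_lambdax} and again undoing the rescaling so that the $\tau_i$ cancel gives $\dot\lambda(t) = -f_x(x(t),t)^T\lambda(t)$ on each $[t_{i-1},t_i)$, which is~\eqref{L2_costate_eqn}. Finally, the possible jumps of $\lambda$ at the interior nodes $t_i$, $i = 1,\ldots,N-1$, are exactly the interstage condition~\eqref{L2_lambda_jump2} (equivalently~\eqref{L2_lambda_jump}), so the full list in the statement is recovered.

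As for the main obstacle: there is essentially none of substance, since the maximum principle and the normality of Problem~(PL2b) have already been established; the only care required is routine bookkeeping — confirming that the $\argmin$ in~\eqref{L2_controli} is attained at its unique stationary point (which relies on $\lambda_0 = 1 > 0$, so that the Hessian of $H^{[i]}$ in $u$ is positive definite), and tracking the rescaling factors $\tau_i$ through both the state and adjoint equations so that they cancel cleanly. It is also worth keeping in mind that the statement asserts only a set of \emph{necessary} conditions, so no sufficiency or converse argument enters the proof.
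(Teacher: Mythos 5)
Your proposal is correct and follows essentially the same route as the paper: the paper derives conditions \eqref{L2_x_eqn}--\eqref{L2_optcontrol} by applying the cited multi-stage maximum principle to (PL2b), invokes Lemma~\ref{L2_lambda0} to set $\lambda_0=1$ and get $u=-\lambda$, and then states Theorem~\ref{theo:L2} as the resulting TPBVP. You merely spell out the routine details (the strict convexity of the Hamiltonian in $w$, the cancellation of the $\tau_i$ factors under the rescaling) that the paper leaves implicit.
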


\begin{remark} \label{rem:L2soln} \rm
Theorem~\ref{theo:L2} asserts that the TPBVP in \eqref{L2_state_eqn}--\eqref{L2_costate_eqn} should be solved in $[t_{i-1},t_i]$ (in each of the $N$ stages) for unknown functions $x$ and $\lambda$.
\proofbox
\end{remark}

\subsection{Necessary conditions of optimality for stage \boldmath{$L^\infty$}-minimization}

Using the rescaling and related definitions made at the beginning of Section~\ref{sec:nec_cond}, a multi-stage optimal control problem for~(PLinfc) can be written as
\[
\mbox{(PLinfd)}\left\{\begin{array}{rl}
\min &\ \ \ds\sum_{i=1}^N \tau_i \int_{0}^{1} y^{[i]}(s)\,ds \\[6mm]
\mbox{s.t.} &\ \ \dot{x}^{[i]}(s) = \tau_i \left(f(x^{[i]}(s),s) + y^{[i]}(s)\,v^{[i]}(s)\right),\mbox{ a.e. } s\in[0,1]\,, \\[2mm]
 &\ \  x^{[i]}(0) = z_{i-1}\,,\ x^{[i]}(1) = z_i\,, \ \ \ i = 1,\ldots,N\,,\\[2mm]
 & \ \ \dot{y}^{[i]}(s) = 0\,,\mbox{ all } s\in[0,1]\,, \\[2mm]
 & \ \ |v^{[i]}_j(s)| \le 1\,,\mbox{ a.e. } s\in[0,1]\,, \ \ \ i = 1, \ldots, N,\ \ j = 1,\ldots,n\,.
\end{array}\right.
\]

The Hamiltonian function for the $i$th stage of Problem~(PLinfd) is
\[
H^{[i]}(x^{[i]},y^{[i]},\lambda_0,\lambda^{[i]},\mu^{[i]},v^{[i]}) := \tau_i\left(\lambda_0\,y^{[i]} + (\lambda^{[i]})^T \left(f(x^{[i]},s) + y^{[i]}\,v^{[i]}\right) + \mu^{[i]}\cdot 0\right),
\]
where $\mu^{[i]}:[0,1]\rightarrow\dR$ are the additional adjoint variables (compared to the $L^2$-minimization formulation in Section~\ref{sec:L2-min}) in the $i$th stage.  Let
\[
H^{[i]}[s] := H^{[i]}(x^{[i]}(s),y^{[i]}(s),\lambda_0,\lambda^{[i]}(s),\mu^{[i]}(s),v^{[i]}(s),s)\,.
\]
We invoke the maximum principle as in \cite[Theorem~3.1 and Corollary~3.1]{ClaVin1989} and \cite[Section~4]{AugMau2000} again, this time for Problem~(PLinfd): suppose that $x^{[i]}\in W^{1,\infty}(0,1;\dR^n)$, $y^{[i]}\in W^{1,\infty}(0,1;\dR)$ and $v^{[i]}\in L^\infty(0,1;\dR^n)$ solve Problem~(PLinfd).  Then there exist a number $\lambda_0\ge0$, the vector function $\lambda^{[i]}\in W^{1,\infty}(0,1;\dR^n)$, and $\mu^{[i]}\in W^{1,\infty}(0,1;\dR)$, such that $\overline\lambda^{[i]}(s):= (\lambda_0,\lambda^{[i]}(s),\mu^{[i]}(s)) \neq \bf0$, for every $s\in[0,1]$, $i = 1,\ldots,N$, and, in addition to the state differential equations and the other constraints given in Problem~(PLinfd), the following conditions hold:
\begin{eqnarray}
&& \dot{\lambda}_j^{[i]}(s) = -H^{[i]}_{x_j}[s]\,,\mbox{ a.e. } s\in[0,1],\ \ i = 1,\ldots,N,\ j = 1,\ldots,n\,, \label{Linf_lambdax}  \\[1mm]
&& \lambda_j^{i+1}(0) =  \lambda_j^{[i]}(1) + \delta_j^{[i]}\,,\ \ i = 1,\ldots,N-1,\ j = 1,\ldots,n\,, \label{Linf_lambda_jump}   \\[1mm]
&& \dot{\mu}^{[i]}(s) = -H^{[i]}_{y^{[i]}}[s]\,,\mbox{ a.e. } s\in[0,1],\ \ i = 1,\ldots,N\,,\label{Linf_mu}  \\[1mm]
&& \mu^{[i]}(0) = 0 =  \mu^{[i]}(1)\,,\ \ i = 1,\ldots,N,\ j = 1,\ldots,n\,, \label{Linf_mu_transversality}  \\[1mm]
&& v^{[i]}(s)\in\argmin_{|w|\le 1} H^{[i]}(x^{[i]}(s),y^{[i]}(s),\lambda_0,\lambda^{[i]}(s),\mu^{[i]}(s),w,s)\,,\ \mbox{a.e. } s\in[0,1]\,,\label{Linf_controli}
\end{eqnarray}
where $\delta_j^{[i]}$ are real constants.  As in the case of optimality conditions for $L^2$-minimization, here one also has a jump condition on the adjoint variable $\lambda$ at the junction/skeleton points, as given in~\eqref{Linf_lambda_jump}.  The new adjoint variables $\mu^{[i]}$, on the other hand, are continuous in each stage, and this is reflected by~\eqref{Linf_mu} and \eqref{Linf_mu_transversality}, the latter providing the transversality conditions in each stage.  We define the ``overall" adjoint variable $\mu(t)$, in the same way as we had defined $\lambda(t)$ in~\eqref{overall}:
\[
\mu(t) := \mu^{[i]}(s)\,,\quad  t = t_{i-1} + s\,\tau_i,\quad s\in[0,1]\,,\quad \tau_i := t_i - t_{i-1}\,,\quad i = 1,\ldots,N\,.
\]
The adjoint variable $\mu(t)$ is continuous in view of~\eqref{Linf_mu_transversality}.  The optimality conditions \eqref{Linf_lambdax}--\eqref{Linf_controli} can now be re-written, along with the state equations, as follows.
\begin{eqnarray}
\dot{x}(t) &=& f(x(t),t) + \alpha\,v(t)\,,\mbox{ a.e. } t\in[t_0,t_f],\ x(0) = z_0,\ x(t_1) = z_1,\ldots,\ x(t_N) = z_N,
\label{Linf_x_eqn} \\[1mm]
\dot{\lambda}(t) &=& -f_x(x(t),t)^T\,\lambda(t)\,,\mbox{ a.e. } t\in[t_{i-1},t_i)\,,\ i = 1,\ldots,N\,, \label{Linf_adjoint_DE} \\[1mm]
\lambda_j(t_i^+) &=&  \lambda_j(t_i^-) + \delta_j^{[i]}\,,\ \ i = 1,\ldots,N-1,\ j = 1,\ldots,n\,, \label{Linf_lambda_jump2}   \\[1mm]
\dot{\mu}(t) &=& -\lambda_0 - \lambda(t)^T v(t)\,,\mbox{ a.e. } t\in[t_0,t_f],\ \ \mu(t_{i-1}) = 0\,,\ \mu(t_i) = 0\,,   \label{Linf_mu2} \\[1mm]
v_j(t) &=& \left\{\begin{array}{ll}
\ \ 1\,, & \mbox{if}\  \lambda_j(t) < 0\,, \\[2mm]
-1\,, & \mbox{if}\ \lambda_j(t) > 0\,, \\[2mm]
\mbox{\small undetermined}\,, & \mbox{if}\ \lambda_j(t) = 0\,,
\end{array}\right. \mbox{\ \ a.e. } t\in[t_{i-1},t_i),\ i = 1,\ldots,N,\ j = 1,\ldots,n, \label{Linf_opt_control}
\end{eqnarray}
where, in obtaining~\eqref{Linf_opt_control} from~\eqref{Linf_controli}, we have used $y(t) = \alpha^{[i]} > 0$, for $t\in[t_{i-1},t_i)$.

The following lemma can be considered as the companion of Lemma~\ref{L2_lambda0}, for the stage $L^\infty$-minimization of the residual.

\begin{lemma}[Normality]  \label{Linf_lambda0}
Problem~{\em(PLinfd)} is normal.  Subsequently, one can set $\lambda_0 = 1$ for Problem~{\em(PLinfd)}.
\end{lemma}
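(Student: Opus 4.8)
The plan is to follow the template of the proof of Lemma~\ref{L2_lambda0}: assume for contradiction that $\lambda_0 = 0$ and deduce that the full multiplier triple $\overline\lambda^{[i]}(s) = (\lambda_0,\lambda^{[i]}(s),\mu^{[i]}(s))$ vanishes, which is forbidden by the nontriviality clause of the maximum principle invoked for Problem~(PLinfd). The new ingredient, compared with the $L^2$ case, is that the control optimality condition no longer determines $\lambda$ directly (in the $L^2$ case one had $\lambda_0\,u = -\lambda$), so I would route the argument through the auxiliary adjoint variable $\mu$ and the stagewise transversality conditions~\eqref{Linf_mu_transversality}.

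First I would evaluate the optimal-control term. Because $y(t) = \alpha^{[i]} > 0$ on each stage, the sign rule~\eqref{Linf_opt_control} coming from~\eqref{Linf_controli} gives $v_j(t) = -\sgn(\lambda_j(t))$ whenever $\lambda_j(t)\neq 0$, and therefore $\lambda_j(t)\,v_j(t) = -|\lambda_j(t)|$ for every $j$ (the components with $\lambda_j(t) = 0$, on which $v_j$ is undetermined, contribute $0$ either way). Hence $\lambda(t)^T v(t) = -\sum_{j=1}^n |\lambda_j(t)|$ for a.e.\ $t$. Substituting this together with $\lambda_0 = 0$ into the $\mu$-equation in~\eqref{Linf_mu2} gives $\dot\mu(t) = \sum_{j=1}^n |\lambda_j(t)| \ge 0$ for a.e.\ $t\in[t_{i-1},t_i)$, $i = 1,\ldots,N$.

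Next I would invoke the transversality conditions stagewise. On each stage $[t_{i-1},t_i]$ the function $\mu$ is absolutely continuous (since $\mu^{[i]}\in W^{1,\infty}(0,1;\dR)$), nondecreasing by the inequality just obtained, and satisfies $\mu(t_{i-1}) = \mu(t_i) = 0$ by~\eqref{Linf_mu_transversality}; a nondecreasing absolutely continuous function vanishing at both endpoints of an interval is identically zero there, so $\mu\equiv 0$ on $[t_{i-1},t_i]$ and $\dot\mu = 0$ a.e.\ there. Then $\sum_{j=1}^n |\lambda_j(t)| = 0$ a.e., so $\lambda(t) = 0$ a.e., and since $\lambda^{[i]}\in W^{1,\infty}(0,1;\dR^n)$ is continuous on each stage, $\lambda(t) = 0$ for every $t$. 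Consequently $\overline\lambda^{[i]}(s) = (0,0,0)$ for all $s\in[0,1]$ and all $i$, which contradicts the nontriviality clause. Therefore $\lambda_0 > 0$, and after rescaling the multipliers one may take $\lambda_0 = 1$, exactly as claimed.

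The only delicate point I anticipate is the first step: being careful that the $\argmin$ characterization of $v$ really yields $\lambda(t)^T v(t) = -\sum_{j=1}^n |\lambda_j(t)|$ on the whole stage (including the null set of times at which some $\lambda_j$ vanishes and $v_j$ is not pinned down), so that $\dot\mu$ acquires a definite sign. Once that sign is established, the remainder is a one-line monotonicity argument applied stage by stage, and the normality conclusion follows just as in Lemma~\ref{L2_lambda0}.
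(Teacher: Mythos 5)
Your proposal is correct and follows essentially the same route as the paper's own proof: assume $\lambda_0=0$, use the sign structure of the optimal control in~\eqref{Linf_opt_control} to get $\dot{\mu}(t)=|\lambda(t)^T v(t)|\ge 0$, and then use the stagewise boundary conditions $\mu(t_{i-1})=\mu(t_i)=0$ to force $\lambda\equiv 0$ and $\mu\equiv 0$, contradicting the nontriviality of $(\lambda_0,\lambda,\mu)$. Your write-up is just slightly more explicit than the paper (spelling out $\lambda(t)^T v(t)=-\sum_j|\lambda_j(t)|$ and the monotone-absolutely-continuous argument on each stage), but the underlying argument is identical.
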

\begin{proof}
From \eqref{Linf_opt_control}, one has $\lambda_j(t)\,v_j(t) \le 0$,  $i=1,\ldots,N$, and so $\lambda(t)^T v(t) \le 0$, for a.e.\ $t\in[t_0,t_N]$.  Therefore, $\dot{\mu}(t) = -\lambda_0 + |\lambda(t)^T v(t)|$.  Suppose that $\lambda_0 = 0$.  Then $\dot{\mu}(t) = |\langle\lambda(t),v(t)\rangle| \ge 0$, and so the boundary conditions in \eqref{Linf_mu} are satisfied only if $|\langle\lambda(t),v(t)\rangle| = 0$, or, also using \eqref{Linf_opt_control}, only if $\lambda(t) = 0$, for all $t\in[t_0,t_N]$.  This results in $\mu(t) = 0$ and thus the adjoint variable vector $(\lambda_0,\lambda(t),\mu(t)) = 0$, for a.e.\ $t\in[t_0,t_N]$, which is not allowed by the maximum principle.  Therefore, $\lambda_0 \neq 0$, and so one can set $\lambda_0 = 1$.
\end{proof}

The optimality conditions \eqref{Linf_x_eqn}--\eqref{Linf_opt_control} can now be written as a two-point boundary-value problem in the following theorem, which can be considered as the companion of Theorem~\ref{theo:L2}, for the stage $L^\infty$-minimization of the residual.
\begin{theorem}[Stage $L^\infty$-residual]  \label{theo:Linf}
Suppose that $x(t)$ and $u(t) := \alpha^{[i]}\,v(t)$, for $t\in[t_{i-1},t_i]$, $i = 1,\ldots,N$, solve Problem~{\rm(Linf)}. Then $x(t)$, $v(t)$ and $\alpha^{[i]}$ solve
\begin{eqnarray}
\dot{x}(t) &=& f(x(t),t) + \alpha^{[i]}\,v(t)\,,\ \ \ x(t_{i-1}) = z_{i-1}\,,\ \ x(t_i) = z_i\,,  \label{Linf_x_eqn2} \\[1mm]
\dot{\lambda}(t) &=& -f_x(x(t),t)^T\,\lambda(t)\,, \label{Linf_adjoint_DE2} \\[1mm]
\dot{\mu}(t) &=& -1 - \lambda(t)^T v(t)\,,\ \ \mu(t_{i-1}) = 0\,,\ \mu(t_i) = 0\,,  \label{Linf_mu2a}
\end{eqnarray}
where
\begin{equation}  \label{Linf_opt_control_u}
v_j(t) = \left\{\begin{array}{ll}
\ \ 1\,, & \mbox{if}\ \lambda_j(t) < 0\,, \\[2mm]
-1\,, & \mbox{if}\ \lambda_j(t) > 0\,, \\[2mm]
\mbox{\small undetermined}\,, & \mbox{if}\ \lambda_j(t) = 0\,,
\end{array}\right.
\end{equation}
for a.e.\ $t\in[t_{i-1},t_i]$, $i = 1,\ldots,N$, and $j = 1,\ldots,n$, with possible jumps in the values of the adjoint variable vector $\lambda(t)$ at $t_i$, $i = 1,\ldots,N-1$.
\end{theorem}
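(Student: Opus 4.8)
The plan is to read the asserted two-point boundary-value problem directly off the multi-stage maximum principle of \cite{ClaVin1989,AugMau2000,DmiKag2011}, applied to the reformulation (PLinfd), together with the normality statement of Lemma~\ref{Linf_lambda0}; indeed the displayed system \eqref{Linf_x_eqn2}--\eqref{Linf_opt_control_u} is nothing but the rescaled necessary conditions \eqref{Linf_x_eqn}--\eqref{Linf_opt_control} with $\lambda_0$ set to $1$. First I would record that the chain of reformulations from (PLinf) through (PLinfd), built in Section~\ref{sec:formulation} and at the start of Section~\ref{sec:nec_cond}, carries a solution $x(t)$, $u(t)=\alpha^{[i]}v(t)$ of the stage $L^\infty$ problem to a solution $(x^{[i]},y^{[i]}\equiv\alpha^{[i]},v^{[i]})$ of (PLinfd), the auxiliary state $y$ with $\dot y=0$ merely encoding the stagewise bound, and that continuity of $x$ at the interior nodes is automatic from $x^{[i]}(1)=x^{[i+1]}(0)=z_i$. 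It then suffices to write out the maximum-principle conditions for (PLinfd) and undo the rescaling $t=t_{i-1}+s\tau_i$, using \eqref{overall} and the analogous concatenation for $\mu$.

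Second I would carry out the short Hamiltonian computations. Since $H^{[i]}$ depends on $x^{[i]}$ only through $f(x^{[i]},s)$, condition \eqref{Linf_lambdax} gives $\dot\lambda^{[i]}(s)=-\tau_i f_x(x^{[i]},s)^T\lambda^{[i]}(s)$, and because $d/dt=\tau_i^{-1}\,d/ds$ on stage $i$ this is exactly \eqref{Linf_adjoint_DE2}, the permitted jumps of $\lambda$ at the interior nodes being supplied by \eqref{Linf_lambda_jump}. Differentiating $H^{[i]}$ in $y^{[i]}$ yields $H^{[i]}_{y^{[i]}}=\tau_i(\lambda_0+(\lambda^{[i]})^T v^{[i]})$, so \eqref{Linf_mu} rescales to $\dot\mu(t)=-\lambda_0-\lambda(t)^T v(t)$; the two-sided transversality \eqref{Linf_mu_transversality}, which holds because $y$ (equivalently $\alpha^{[i]}$) is unconstrained at both endpoints of each stage, becomes $\mu(t_{i-1})=0=\mu(t_i)$ and furnishes the extra scalar equation per stage that pins down $\alpha^{[i]}$. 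Finally, the only $v$-dependent term of $H^{[i]}$ is $\tau_i\,y^{[i]}(\lambda^{[i]})^T v^{[i]}$, so the minimization over $|v^{[i]}_j|\le 1$ in \eqref{Linf_controli} gives, coordinatewise, $v_j=-\sgn(\lambda_j)$ wherever $\lambda_j\neq 0$ and $v_j$ undetermined on $\{\lambda_j=0\}$, i.e.\ \eqref{Linf_opt_control_u}.

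The step needing care — and the place I expect the only friction — is this last minimization: it produces the bang-bang law \eqref{Linf_opt_control_u} only when $\alpha^{[i]}=y^{[i]}>0$, since if $\alpha^{[i]}=0$ on a stage the factor $y^{[i]}$ kills the dependence on $v$ there. But $\alpha^{[i]}=0$ forces $u\equiv 0$ a.e.\ on $[t_{i-1},t_i]$, i.e.\ the interpolant is an exact trajectory of $\dot x=f(x,t)$ that hits both $z_{i-1}$ and $z_i$; by uniqueness for the initial-value problem this cannot occur unless $z_{i-1}$ and $z_i$ happen to lie on a common exact solution, which is excluded for any genuinely approximate skeleton and is consistent with the standing assumption $z_i\neq z_{i-1}$. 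Hence $\alpha^{[i]}>0$ on every stage, as was used in passing from \eqref{Linf_controli} to \eqref{Linf_opt_control} (and should a stage nonetheless have $\alpha^{[i]}=0$, the necessary conditions there degenerate and the assertion holds vacuously on it). One also checks that the smoothness assumed of $f$ makes $H^{[i]}$ of class $C^1$ in $(x,y)$ while the control set $\{|v_j|\le 1\}$ is compact, so the hypotheses of \cite{ClaVin1989,AugMau2000} are met. Invoking Lemma~\ref{Linf_lambda0} to set $\lambda_0=1$ then turns the $\mu$-equation into \eqref{Linf_mu2a} and leaves \eqref{Linf_x_eqn2}, \eqref{Linf_adjoint_DE2} and \eqref{Linf_opt_control_u} as displayed; assembling these on each $[t_{i-1},t_i]$ together with the interface jumps of $\lambda$ gives precisely the claimed two-point boundary-value problem.
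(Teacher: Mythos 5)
Your proposal is correct and takes essentially the same route as the paper: the paper also derives Theorem~\ref{theo:Linf} by applying the cited multiprocess maximum principle to (PLinfd), computing the Hamiltonian partial derivatives to get \eqref{Linf_lambdax}--\eqref{Linf_controli}, undoing the rescaling $t=t_{i-1}+s\,\tau_i$, and invoking Lemma~\ref{Linf_lambda0} to set $\lambda_0=1$, with the passage from \eqref{Linf_controli} to \eqref{Linf_opt_control} justified by $y(t)=\alpha^{[i]}>0$. Your extra paragraph explaining why $\alpha^{[i]}>0$ (and noting the degenerate case where a stage lies on an exact trajectory, as indeed happens for \texttt{ode113} in Example~2) is more explicit than the paper, which simply asserts this positivity.
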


\begin{remark} \label{rem:Linfsoln} \rm
Theorem~\ref{theo:Linf} asserts that the TPBVP in \eqref{Linf_x_eqn2}--\eqref{Linf_mu2a} can be independently solved in $[t_{i-1},t_i]$ (in each of the $N$ stages) for unknown functions $x$, $\lambda$ and $\mu$, and the parameter $\alpha^{[i]}$.
\proofbox
\end{remark}

\noindent
{\bf Bang--Bang and Singular Control.}\ \
If $\lambda_j(t) = 0$ only at isolated points in $[s_1,s_2]\subset[t_0,t_f]$, then $v_j(t)\in\{-1,1\}$ for a.e.\ $t\in[s_1,s_2]$ by~\eqref{Linf_opt_control_u}.  In this case, $v_j(t)$ is called {\em bang--bang control} over $[s_1,s_2]$, since either the value of $v_j(t)$ changes between $1$ and $-1$, or simply $v_j(t) \equiv 1$ or $v_j(t) \equiv -1$, over this interval.  However, if $\lambda_j(t) = 0$, for a.e.\ $t\in[s_1,s_2]\subset[t_0,t_f]$, then, by~\eqref{Linf_opt_control_u}, $v_j(t)$ is undetermined over $[s_1,s_2]$ and is referred to as {\em singular control}.  If $v_j(t)$ is of singular type throughout the interval $[t_0,t_f]$, then it is called {\em totally singular}.

\begin{lemma}[Partial singularity] \label{lem:tot-sing}
Suppose that $n=1$.  Then the optimal control $v(t)$ cannot be singular over the entire interval $[t_{i-1},t_i]$ for any given $i = 1,\ldots,N$.  Therefore, $v(t)$ can also not be totally singular.
\end{lemma}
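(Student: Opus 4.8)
The plan is to argue by contradiction: assume $v$ is singular over all of $[t_{i-1},t_i]$, which by \eqref{Linf_opt_control_u} forces $\lambda(t)\equiv 0$ on that stage (here $n=1$, so $\lambda$ is scalar). I would then feed $\lambda\equiv 0$ into the remaining necessary conditions from Theorem~\ref{theo:Linf} and show they become inconsistent with the boundary data. The key observation is that \eqref{Linf_adjoint_DE2} is automatically satisfied by $\lambda\equiv 0$ (it is linear homogeneous in $\lambda$), so no contradiction comes from the adjoint equation itself; the contradiction must come from the $\mu$-equation \eqref{Linf_mu2a} together with its two-sided boundary conditions $\mu(t_{i-1})=\mu(t_i)=0$.

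The decisive step: with $\lambda\equiv 0$ on $[t_{i-1},t_i]$, the term $\lambda(t)^T v(t)$ in \eqref{Linf_mu2a} vanishes (regardless of how the undetermined $v$ is chosen), so $\dot\mu(t) = -1$ for a.e.\ $t\in[t_{i-1},t_i]$. Integrating, $\mu(t) = \mu(t_{i-1}) - (t - t_{i-1}) = -(t-t_{i-1})$, hence $\mu(t_i) = -(t_i - t_{i-1}) = -\tau_i < 0$ since $\tau_i>0$. This contradicts the transversality condition $\mu(t_i)=0$. Therefore $v$ cannot be singular over the entire stage $[t_{i-1},t_i]$, and since a totally singular control would in particular be singular over the first stage $[t_0,t_1]$, it cannot be totally singular either.

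I should be slightly careful about one point: ``singular over the entire interval'' in the sense defined just above the lemma means $\lambda_j(t)=0$ for a.e.\ $t\in[t_{i-1},t_i]$, not necessarily everywhere; but $\lambda^{[i]}\in W^{1,\infty}(0,1;\dR^n)$ is (absolutely) continuous, so vanishing a.e.\ implies vanishing everywhere on the closed stage, and the argument above goes through verbatim. The essential use of the hypothesis $n=1$ is only that ``$v$ singular'' collapses to the single scalar condition $\lambda\equiv 0$; for $n>1$ one could have some components of $\lambda$ vanishing while others do not, and then $\lambda^T v$ need not be zero, so the clean computation $\dot\mu=-1$ fails. I do not anticipate a genuine obstacle here — the proof is short — but the one place to be deliberate is confirming that the integrated $\mu$-equation is valid despite $v$ being undetermined, which it is precisely because $v$ is multiplied by the identically-zero $\lambda$.
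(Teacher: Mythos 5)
Your argument is correct and is essentially the paper's own proof: singularity on the whole stage forces $\lambda\equiv 0$, hence $\dot\mu=-1$ on $[t_{i-1},t_i]$, which is incompatible with the two boundary conditions $\mu(t_{i-1})=\mu(t_i)=0$, and total singularity is excluded a fortiori. Your extra remarks (continuity of $\lambda$ upgrading a.e.\ vanishing, and why $n=1$ is needed) are fine elaborations but do not change the route.
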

\begin{proof}
Suppose that (the only control variable) $v(t)$ is singular, i.e., $\lambda(t) = 0$, for a.e.\ $t\in[t_{i-1},t_i]$.  Then the ODE in~\eqref{Linf_mu2a} becomes $\dot{\mu}(t) = -1$, which does not satisfy the boundary conditions $\mu(t_{i-1}) = 0$ and $\mu(t_i) = 0$, violating the maximum principle and thus proving the lemma.  The second statement of the lemma follows immediately from the definition of total singularity.
\end{proof}


\section{Examples}
\label{sec:examples}

In this section, we consider three initial value problems of the form $\dot{x} = f(x(t),t)$, $x(t_0)=x_0$, $t_0\le t\le t_f$.  For the analytical results (in particular, for Examples~1 and 2), we suppose that a numerical skeleton $\{z_0,z_1,\ldots,z_N\}$, with $z_0 = x(t_0)$ and $z_N = x(t_f)$, of the numerical solution of the IVP is obtained by a numerical technique, for example, a Runge--Kutta or a multi-step method.  For graphical illustrations of the optimal residuals obtained using our optimal control approach, we obtain the numerical skeleton with the {\sc Matlab} ODE solvers, {\tt ode15s}, {\tt ode45} and {\tt ode113}.  For each of these solvers, we set {\color{black} {\tt reltol = 1.0e-8} and {\tt abstol = 1.0e-8}}.

For comparison, we also use the {\sc Matlab} function {\tt deval}, which uses a given skeleton to find the values of $x$ at a specified array of interior points of the domain by a polynomial approximation technique specific to each solver~\cite{ShaRei1997}. The particular approximation methods used in {\tt deval} are briefly discussed in that reference, where we learn that the interpolant for \texttt{ode45} is $O(h^4)$ accurate, but the messy details are left to the code \texttt{ntrp45}.  Close inspection of \texttt{ntrp45} shows that the polynomial interpolant for \texttt{ode45}, which was supplied by Dormand and Prince by private communication to the authors of~\cite{ShaRei1997}, is of degree at most 4, uses all stages\footnote{In this section of the paper, we use the word ``stage'' as the Runge--Kutta community does, to refer to each new function evaluation in each line of the Butcher tableau.  The polysemy of the word ``stage'' is perhaps unfortunate, and might cause confusion; we did warn in an earlier footnote that this would happen.} of the RK method including the first stage needed for the following step and only those and is hence ``free'', and has residual $O(h^4)$.  We give some details here because they are not easily available elsewhere (one has to read the code itself) and because writing the interpolant explicitly underscores the difference between the polynomial approach and the approach we propose.

The method used by \texttt{ode45} is a $(5,4)$ Runge--Kutta pair by Dormand and Prince~\cite{Dormand1980}, which uses ``local extrapolation'' meaning that the solution is advanced by the fifth-order method while the error estimate in the fourth-order method is used for stepsize control. The method has the FSAL (First Same As Last) property, which means that while it uses $7$ stages on any given step, the final stage will be re-used on the next step if the current step is successful, making the process almost equivalent to a six-stage method in computational cost.

The Butcher tableau for the method is of the form
\begin{align*}
\renewcommand\arraystretch{1.5}
\begin{array}{c|ccccccc}
     0\; \\
     \tfrac{1}{5}\; & \;\tfrac{1}{5}\; \\
     \tfrac{3}{10}\; & \tfrac{3}{40} & \;\tfrac{9}{40}\; \\
     \tfrac{4}{5}\; & \tfrac{44}{45} & -\tfrac{56}{15} & \;\tfrac{32}{9}\; \\
     \tfrac{8}{9}\; & \tfrac{19372}{6561} & -{\tfrac{25360}{2187}} & \tfrac{64448}{6561} & \;-{\tfrac{212}{729}}\; \\
     1\; & \tfrac{9017}{3168} & -{\tfrac{355}{33}} & \tfrac{46732}{5247} & \tfrac{49}{176} & \;-{\tfrac{5103}{18656}}\; \\
     1\; & {\tfrac{35}{384}} & 0 & {\tfrac{500}{1113}} & {\tfrac{125}{192}} & -{
\tfrac{2187}{6784}} & {\tfrac{11}{84}} \\
     \hline
         & {\tfrac{5179}{57600}} & 0 & {\tfrac{7571}{16695}} & {\tfrac{393}{640}} & -{
\tfrac{92097}{339200}} & \;{\tfrac{187}{2100}}\; & \tfrac{1}{40} \\
         & d_1 & d_2 & d_3 & d_4 & d_5 & d_6 & \;d_7\;
   \end{array} \end{align*}
The stage values $k_j$ are given by $k_1 = f(t_n, y_n)$ and
\begin{equation}
    k_j = f\left( t_n+c_j h, y_n + h\sum_{\ell=1}^{j-1} a_{j,\ell} k_\ell \right)
\end{equation}
for $j=2$, $\ldots$, $6$. The seventh stage, $k_7$, which will be re-used on the next step if the current step is accepted, is
\begin{equation}
    k_7 = f\left( t_n + h, y_n + h\sum_{\ell=1}^7 b_\ell k_\ell \right)\>.
\end{equation}
The vector for computing the fourth-order solution used to estimate the error for stepsize control purposes is $b = \left[{\tfrac{35}{384}}, 0, {\tfrac{500}{1113}}, {\tfrac{125}{192}}, -{
\tfrac{2187}{6784}}, {\tfrac{11}{84}}\right]
$.

The continuous extension is the polynomial interpolant given by
\begin{equation}
    z(s) = y_n + h\sum_{j=1}^7 d_j(s) k_j \label{eq:dormandprincecext}
\end{equation}
where $s = (t-t_n)/h$ and
\begin{align}
    d_1(s) &= -\tfrac{1}{384}{s \left(435 s^{3}-1184 s^{2}+1098 s -384\right)} \nonumber \\
    d_2(s) &= 0 \nonumber \\
    d_3(s) &= \tfrac{500}{1113}{ s^{2} \left(6 s^{2}-14 s +9\right)} \nonumber \\
    d_4(s) &= -\tfrac{125}{192} s^{2} \left(9 s^{2}-16 s +6\right) \nonumber \\
    d_5(s) &= \tfrac{729}{6784} s^{2} \left(35 s^{2}-64 s +26\right) \nonumber \\
    d_6(s) &= -\frac{11}{84} s^{2} \left(3 s -2\right) \left(5 s -6\right) \nonumber \\
    d_7(s) &= \tfrac{1}{2}s^{2} \left(5 s -3\right) \left(s -1\right) \>.
\end{align}
We give this interpolant in factored form so that it is intelligible for the reader, who is not expected to be an expert in interpolation schemes for Runge--Kutta methods.  Note that all of the $d_i$ are zero when $s=0$, ensuring that the polynomial interpolates the start of the step.  Note that $d_7$ is zero also when $s=1$, meaning that the seventh stage is not involved in the value of $y$ at the end of the step (this is the FSAL property).  Note also that the derivatives of each of the polynomials are zero at $s=0$, except for the first polynomial (it is $1$ there, which is not evident without computation). This means that the derivative at $s=0$ of the interpolant will match the value of the first stage, i.e. the derivative of $y$ at $s=0$.  Finally, computation shows that the derivative of all the polynomials except $d_7$ is zero at $s=1$, while the derivative of $d_7$ is $1$ at $s=1$, which means that the derivative of the interpolant matches the derivative of the computed solution $y$ at $s=1$, ensuring that the interpolant is continuously differentiable as a piecewise function, apart from rounding errors.

Because the interpolant for \texttt{ode45} is piecewise continuously differentiable, it is piecewise continuous, and hence must have a residual no smaller than the minimal residual.  In fact, because the skeleton is $O(h^5)$ accurate and the interpolant is only $O(h^4)$ accurate, we usually see that the residual using the interpolant of \texttt{deval} is substantially larger than the minimal residual.

The approximations for \texttt{ode15s} and \texttt{ode113} are \textsl{not} continuously differentiable, by the way. They are not even continuous at both the mesh points $t_n$ and $t_{n+1}$.  This implies that computing a residual using these approximations may not give an upper bound on the minimal residual.  That is, the computed residual using these approximations is only an estimate of the residual, and may not be reliable in any consistent way.

We used both Maple and the B-series package in Julia~\cite{ketcheson2023computing,ranocha2021bseries} to analyze the interpolant for \texttt{ode45}.  Computation with the B-series package confirmed that the interpolant is $4$th order accurate, as we shall see in a moment.
In detail, according to the B-series package,
the leading terms of the residual are
\begin{align}
\frac{dz}{dt}  - f(z) =  s  \left( 1-s \right)\Biggl( &\frac{1}{120}  \left( 5  s^{2} - 9  s + 3 \right)  F_{f}\mathopen{}\left( \rootedtree[.[.[.[.[.]]]]] \right)\mathclose{} + \frac{1}{120}  \left( 5  s^{2} - 5  s + 1 \right)  F_{f}\mathopen{}\left( \rootedtree[.[.[.[.][.]]]] \right)\mathclose{} \nonumber\\&{}+ \frac{1}{40}  \left( 5  s^{2} - 5  s + 1 \right)  F_{f}\mathopen{}\left( \rootedtree[.[.[.[.]][.]]] \right)\mathclose{} + \frac{1}{30}  \left( 5  s^{2} - 5  s + 1 \right)  F_{f}\mathopen{}\left( \rootedtree[.[.[.[.]]][.]] \right)\mathclose{} \nonumber\\&{}+ \frac{1}{120}\left( 5  s^{2} - 5  s + 1 \right)  F_{f}\mathopen{}\left( \rootedtree[.[.[.][.][.]]] \right)\mathclose{} + \frac{1}{540} \left( 90  s^{2} - 92  s + 19 \right)  F_{f}\mathopen{}\left( \rootedtree[.[.[.][.]][.]] \right)\mathclose{} \nonumber\\&{}+ \frac{1}{720}  \left( 90  s^{2} - 92  s + 19 \right)  F_{f}\mathopen{}\left( \rootedtree[.[.[.]][.[.]]] \right)\mathclose{} + \frac{1}{360}  \left( 90  s^{2} - 92  s + 19 \right)  F_{f}\mathopen{}\left( \rootedtree[.[.[.]][.][.]] \right)\mathclose{} \nonumber\\&{}+ \frac{1}{2160}  \left( 90  s^{2} - 92  s + 19 \right)  F_{f}\mathopen{}\left( \rootedtree[.[.][.][.][.]] \right)\mathclose{}\Biggr) h^4  + O(h^5)
\end{align}
Remember that $t = t_n + sh$ so $dz/dt = (dz/ds)/h$ by the chain rule.

The output of the B-series script has been tidied up by use of Maple so that one can see at a glance that the residual is zero at either end of the step ($s=0$ and $s=1$).  We use a common notation for elementary differentials and rooted trees.  We are not aware of any published residual analysis of this method.  We only print the fourth-order terms here, but more are available if needed.  The B-series package is quite effective and efficient!

We refer back to Figure~\ref{fig:shoode45}.  The theory above predicts that the residual will behave like $O(h^4)$ as the mean stepsize goes to zero (assuming that \texttt{ode45} equidistributes the error, which it approximately does).  We see that for the simple harmonic oscillator, this is true: the \textsl{actual} residual error diminishes like $O(h^4)$, instead of $O(h^5)$.  [We do not show this here, but for the Van der Pol oscillator a phenomenon known as ``order reduction'' happens and we only get $O(h^3)$ decay of the residual error.] We remind you that \texttt{ode45} uses local extrapolation and is expected to have (``forward'' or ``global'') error that diminishes like $O(h^5)$.  We do not show this here, but this happens.  What we are seeing here is that the \textsl{interpolant} supplied by \texttt{deval} for \texttt{ode45} is only $O(h^4)$ accurate, and so the residual error it measures is not as accurate as it could be.

In what follows we demonstrate using the tools of optimal control to understand the behaviour of numerical methods.  For the first two examples the analysis can be carried out ``by hand.''
In Example~3, which involves the Van der Pol system of two first-order ODEs, an analytical solution is not possible to obtain; therefore, the optimal control problems are solved by the {\em first-discretize-then-optimize} approach~\cite{Betts2020}: Direct discretization of an (infinite-dimensional) optimal control problem gives rise to a large-scale finite-dimensional optimization problem, which is then solved by using standard optimization software.  We use the mathematical programming modelling language AMPL~\cite{AMPL} employing the optimization software Knitro~\cite{Knitro}, version 13.0.1.  We set AMPL's tolerance parameters as {\tt feastol = 1e-12} and {\tt opttol = 1e-12} for Knitro.

\subsection{Example 1: the Dahlquist test problem}

Consider the Dahlquist test problem,
\begin{equation} \label{simpleeqn}
\dot{z}(t) = a\,z(t)\,,\quad z(0) = z_0\,,
\end{equation}
where $a$ is a nonzero real constant.

We recall that in~\cite{CorKayMoi2019} the $L^\infty$-norm of the {\em relative} residual is minimized in a given stage for the Dahlquist test problem by also using an optimal control approach.  In Sections~\ref{subsubsec:Dahlquist_L2} and \ref{subsubsec:Dahlquist_Linf} below, we obtain analytical expressions for the {\em absolute} residuals that minimize the $L^2$- and stage $L^\infty$-norms, respectively, for the same ODE, and compare the two residuals.

\subsubsection{\boldmath{$L^2$}-minimization of the residual}
\label{subsubsec:Dahlquist_L2}

\begin{proposition}[\boldmath{$L^2$}-residual of \eqref{simpleeqn}] \label{simp_fact_L2}
For $i = 1,\ldots,N$, the residual
\begin{equation} \label{simp_res_L2}
u(t) = -2a\,\frac{z_i - e^{a(t_i-t_{i-1})}z_{i-1}}{1 - e^{2a(t_i-t_{i-1})}}\,e^{a(t_i-t)}\,,
\end{equation}
for all $t\in[t_{i-1},t_i]$, solves {\em (PL2)} with $f(x(t),t) = a\,x(t)$.  The resulting interpolant is given by
\begin{equation} \label{simp_x_L2}
x(t) = -\frac{z_i - e^{-a(t_i-t_{i-1})} z_{i-1}}{e^{-a(t_i-t_{i-1})} - e^{a(t_i-t_{i-1})}}\,e^{a(t-t_{i-1})} - \frac{1}{2a}\,u(t)\,,
\end{equation}
for all $t\in[t_{i-1},t_i]$.
\end{proposition}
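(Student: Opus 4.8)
The plan is to apply Theorem~\ref{theo:L2} directly. For the Dahlquist problem $f(x,t)=a\,x$ we have $f_x(x,t)=a$, so on each stage $[t_{i-1},t_i]$ the necessary conditions \eqref{L2_state_eqn}--\eqref{L2_costate_eqn} reduce to the linear, constant-coefficient two-point boundary-value problem $\dot\lambda=-a\,\lambda$, $\dot x=a\,x-\lambda$, with $x(t_{i-1})=z_{i-1}$ and $x(t_i)=z_i$; by Remark~\ref{rem:L2soln} these may be solved separately in each stage. The system decouples: I would first integrate the scalar costate equation to get $\lambda(t)=K\,e^{-at}$ for a stage-dependent constant $K$, then substitute into the state equation and integrate the resulting first-order linear ODE (integrating factor $e^{-at}$) to obtain $x(t)=D\,e^{at}+\tfrac{K}{2a}\,e^{-at}$ with a second constant $D$. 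Since $u=-\lambda$ by Lemma~\ref{L2_lambda0}, the term $\tfrac{K}{2a}e^{-at}$ equals $-\tfrac{1}{2a}u(t)$, which already accounts for the shape of \eqref{simp_x_L2}.

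Next I would pin down the two constants from the endpoint conditions. Writing $\tau_i:=t_i-t_{i-1}$ and absorbing the factors $e^{\pm a t_{i-1}}$ into $D$ and $K$, the conditions $x(t_{i-1})=z_{i-1}$, $x(t_i)=z_i$ become a $2\times 2$ linear system with determinant $e^{-a\tau_i}-e^{a\tau_i}$; this is nonzero because $a\neq0$ and $\tau_i>0$, so the system is uniquely solvable, and moreover $1-e^{2a\tau_i}\neq 0$, so \eqref{simp_res_L2}--\eqref{simp_x_L2} are well defined. Solving the system, using $e^{-a(t-t_{i-1})}=e^{-a\tau_i}e^{a(t_i-t)}$, and cancelling a factor $e^{-a\tau_i}$ from numerator and denominator (which converts $e^{-a\tau_i}-e^{a\tau_i}$ into $1-e^{2a\tau_i}$) yields exactly \eqref{simp_res_L2} for $u=-\lambda$; back-substituting into $x(t)=D\,e^{at}+\tfrac{K}{2a}e^{-at}$ and simplifying the coefficient of $e^{a(t-t_{i-1})}$ gives \eqref{simp_x_L2}. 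The jump constants $\delta^{[i]}_j$ in \eqref{L2_lambda_jump2} play no role: they are simply recovered a posteriori as $\lambda(t_i^+)-\lambda(t_i^-)$.

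Finally I would close the gap between ``satisfies the necessary conditions'' and ``solves (PL2)''. Problem~(PL2) separates over stages, and on each stage minimizing $\tfrac12\|u\|_{L^2}^2$ with $u:=\dot x-a\,x$ subject to the single affine endpoint constraint $x(t_i)=z_i$ (with $x$ recovered from $u$ by variation of constants) is the orthogonal projection of the origin onto a nonempty closed affine subspace of the Hilbert space $L^2(t_{i-1},t_i;\dR^n)$; hence a minimizer exists and is unique, and by Theorem~\ref{theo:L2} it must coincide with the unique boundary-value-problem solution computed above. I do not anticipate a genuine obstacle: the proof is essentially a short calculation, and the only points demanding care are the algebraic bookkeeping needed to land on the stated closed forms and the observation that $1-e^{2a\tau_i}\neq0$.
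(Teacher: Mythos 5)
Your proposal is correct, and its core is the same route as the paper's: the paper likewise invokes Theorem~\ref{theo:L2} to reduce the problem on each stage to the constant-coefficient TPBVP $\dot x = a\,x-\lambda$, $\dot\lambda=-a\,\lambda$, $x(t_{i-1})=z_{i-1}$, $x(t_i)=z_i$, recalls $u=-\lambda$ from Remark~\ref{rem:L2_opt_eqns}, and then simply states that ``elementary linear algebraic techniques'' yield \eqref{simp_res_L2}--\eqref{simp_x_L2}; your integration of $\lambda(t)=Ke^{-at}$, the particular solution $\tfrac{K}{2a}e^{-at}$, and the $2\times2$ endpoint system with determinant $e^{-a\tau_i}-e^{a\tau_i}\neq 0$ is exactly the omitted algebra, and your simplifications do land on the stated closed forms. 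Where you genuinely go beyond the paper is the final step: the paper's proof rests on the necessary conditions alone and does not address why the TPBVP solution actually minimizes, whereas you close this gap by observing that on each stage the feasible residuals form a nonempty closed affine subspace of $L^2(t_{i-1},t_i)$ (the constraint is $\int_{t_{i-1}}^{t_i}e^{a(t_i-s)}u(s)\,ds=z_i-e^{a\tau_i}z_{i-1}$, a single continuous affine condition), so the minimizer exists and is the orthogonal projection of the origin, hence unique, and must coincide with the unique solution of the necessary conditions. This buys a genuinely complete optimality argument; in fact the projection formula $u^\ast=\bigl(c/\|g\|_{L^2}^2\bigr)g$ with $g(s)=e^{a(t_i-s)}$ and $c=z_i-e^{a\tau_i}z_{i-1}$ reproduces \eqref{simp_res_L2} directly, so your Hilbert-space observation could even stand alone as a derivation independent of the maximum principle.
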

\begin{proof}
The two-point boundary-value problem in \eqref{L2_state_eqn}--\eqref{L2_costate_eqn} in Theorem~\ref{theo:L2} reduces, for \eqref{simpleeqn} and $i=1,\ldots,N$, to
\begin{eqnarray*}
&&\dot{x}(t) = a\,x(t) - \lambda(t),\quad x(t_{i-1}) = z_{i-1}\,,\ \ x(t_i) = z_i\,, \\
&&\dot{\lambda}(t) = -a\,\lambda(t)\,,
\end{eqnarray*}
for all $t\in[t_{i-1},t_i]$.  Also, recall from Remark~\ref{rem:L2_opt_eqns} that $u(t) = -\lambda(t)$, and that the residual function $u$ will typically have jumps at $t_i$, $i=1,\ldots,N-1$.  Using elementary linear algebraic techniques for the solution of systems of constant coefficient ODEs, and manipulations, one can easily obtain the required expressions given in the proposition.
\end{proof}

\begin{remark}  \label{rem:u_L2} \rm
Note from \eqref{simp_res_L2} that the residual $u(t)$ is an exponential in $t$. The modulus of the residual $|u(t)| = r\,e^{a(t_i-t)}$, with $r>0$ a constant, is decreasing if $a>0$ and increasing if $a<0$, in any stage $i = 1,\ldots,N$.  In particular, the { squared} $L^2$-norm of the residual over $[t_{i-1},t_i]$, in the $i$th stage, $i=1,\ldots,N$, is given as
\[
\|u\|_{L^2}^{{ 2}} = -{ 2}\,a\,\frac{\left(z_i - e^{a(t_i-t_{i-1})} z_{ i-1}\right)^2}{1 - e^{2a(t_i-t_{i-1})}}\,.
\]
\proofbox
\end{remark}

\subsubsection{Stage {\boldmath$L^\infty$}-minimization of the residual}
\label{subsubsec:Dahlquist_Linf}

\begin{proposition}[Stage \boldmath{$L^\infty$}-residual of \eqref{simpleeqn}] \label{simp_fact_Linf}
For $i = 1,\ldots,N$, the residual
\begin{equation} \label{simp_res_Linfty}
u(t) = -a\,\frac{z_{ i} - e^{a(t_i-t_{i-1})} z_{ {i-1}}}{1 - e^{a(t_i-t_{i-1})}} =: \overline{u}^{[i]}\,,
\end{equation}
for all $t\in[t_{i-1},t_i]$, solves {\em (PLinf)} with $f(x(t),t) = a\,x(t)$.  The resulting interpolant is given by
\begin{equation} \label{simp_x_Linfty}
x(t) = \left(z_{i-1} + \frac{\overline{u}^{[i]}}{a}\right)\,e^{a(t-t_{i-1})} - \frac{\overline{u}^{[i]}}{a}\,,
\end{equation}
for all $t\in[t_{i-1},t_i]$.
\end{proposition}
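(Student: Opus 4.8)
The plan is to specialize the stage-wise two-point boundary-value problem of Theorem~\ref{theo:Linf} to $f(x,t) = a\,x$ with $n = 1$ and solve it on one stage $[t_{i-1},t_i]$ at a time, as Remark~\ref{rem:Linfsoln} permits (the free jump constants $\delta^{[i]}$ decouple the stages). Since $f_x \equiv a$, the costate equation \eqref{Linf_adjoint_DE2} reads $\dot\lambda(t) = -a\,\lambda(t)$, so $\lambda(t) = \lambda(t_{i-1})\,e^{-a(t-t_{i-1})}$ is a pure exponential; it is therefore either identically zero on the stage or nowhere zero there, with a single fixed sign.

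The first, and really the only, substantive step is to discard the identically-zero possibility: if $\lambda\equiv 0$ on $[t_{i-1},t_i]$, the control would be singular over the whole stage, which Lemma~\ref{lem:tot-sing} rules out for $n=1$. Hence $\lambda$ keeps a constant sign on the stage, and the switching law \eqref{Linf_opt_control_u} forces $v(t)\equiv v_0$ for some constant $v_0\in\{-1,+1\}$, a.e.\ on $[t_{i-1},t_i]$. Consequently the residual $u(t) = \alpha^{[i]}\,v(t) \equiv \alpha^{[i]} v_0 =: \bar u^{[i]}$ is \emph{constant} on the stage.

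With this in hand, I would integrate the state equation \eqref{Linf_x_eqn2}, now the scalar linear ODE $\dot x(t) = a\,x(t) + \bar u^{[i]}$ with constant forcing and initial value $x(t_{i-1}) = z_{i-1}$; this gives precisely the interpolant $x(t) = \bigl(z_{i-1} + \bar u^{[i]}/a\bigr)e^{a(t-t_{i-1})} - \bar u^{[i]}/a$ of \eqref{simp_x_Linfty}. Imposing the remaining endpoint condition $x(t_i) = z_i$ yields one linear equation for the single unknown $\bar u^{[i]}$; solving it — legitimate because $e^{a(t_i-t_{i-1})}\neq 1$ when $a\neq 0$ and $t_i>t_{i-1}$ — produces the closed form \eqref{simp_res_Linfty}, after which $\alpha^{[i]} = |\bar u^{[i]}|$ and $v_0 = \sgn\bar u^{[i]}$.

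To finish I would check that the remaining multipliers can be chosen consistently with the maximum principle, so the candidate is genuinely a solution and not merely a formal one: pick $\lambda(t_{i-1})$ with sign $-v_0$, and fix its magnitude from the transversality conditions $\mu(t_{i-1}) = \mu(t_i) = 0$ in \eqref{Linf_mu2a}; since on the stage $\dot\mu(t) = -1 - \lambda(t)\,v(t) = -1 + |\lambda(t_{i-1})|\,e^{-a(t-t_{i-1})}$, the requirement $\mu(t_i)=0$ gives $|\lambda(t_{i-1})| = a\tau_i/(1 - e^{-a\tau_i})$ with $\tau_i := t_i - t_{i-1}$, and $\mu$ itself then follows by one more quadrature. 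All of \eqref{Linf_x_eqn2}--\eqref{Linf_opt_control_u} hold, and since (PLinf) admits an optimal solution which must satisfy these conditions, while the construction above is the unique candidate on each stage, it is the optimal residual. The main obstacle is exactly the structural observation that the control is constant on each stage; past that, the work is the same constant-coefficient manipulation already invoked in the proof of Proposition~\ref{simp_fact_L2}.
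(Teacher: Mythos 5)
Your proposal is correct and takes essentially the same route as the paper's own proof: exponential costate from \eqref{Linf_adjoint_DE2}, exclusion of the singular case via Lemma~\ref{lem:tot-sing}, hence a constant bang--bang residual on each stage, then solution of the constant-coefficient state equation with both endpoint conditions; your explicit construction of $\mu$ and of $|\lambda(t_{i-1})|$ from the transversality conditions is a small extra verification the paper leaves implicit. One incidental observation: carrying out your final elimination actually yields $\overline{u}^{[i]} = -a\,(z_i - e^{a(t_i-t_{i-1})}z_{i-1})/(1 - e^{a(t_i-t_{i-1})})$, which agrees with the identification of $\alpha^{[i]}$ used in the proof of Proposition~\ref{prop:relation} and with \eqref{simp_x_Linfty}, and suggests that the numerator of \eqref{simp_res_Linfty} as printed has $z_{i-1}$ and $z_i$ interchanged.
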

\begin{proof}
The DE \eqref{Linf_adjoint_DE2} for the adjoint variable in Theorem~\ref{theo:Linf} can be written for this example simply as $\dot{\lambda}(t) = -a\,\lambda(t)$, the solution of which is $\lambda(t) = c\,e^{-a\,t}$, where $c$ is a real constant. Clearly, $c\neq0$, as otherwise $\lambda(t) = 0$ for all $t\in[t_{i-1},t_i]$, which, by Lemma~\ref{lem:tot-sing} is not permissible.  The function $\lambda(t)$ does not change sign, either; therefore, the optimal control cannot be singular, i.e., the optimal control is bang--bang and that $u(t) = -\sgn(\lambda(t))\,\alpha^{[i]} = \overline{u}^{[i]}$, which is constant.  The state equation in \eqref{Linf_x_eqn2} can then be written as
\[
\dot{x}(t) - a\,x(t) = \overline{u}^{[i]}\,\quad x(t_{i-1}) = z_{i-1}\,,\ \ x(t_i) = z_i\,,
\]
solution of which yields~\eqref{simp_x_Linfty}.  Evaluating \eqref{simp_x_Linfty} at $t=t_i$ and re-arranging gives \eqref{simp_res_Linfty}.
\end{proof}

\begin{remark}[Constancy of the residual] \rm
The residual $u(t)$ in \eqref{simp_res_Linfty} is constant for all $t\in[t_{ {i-1}},t_{ i}]$, with the constant value typically changing from one stage to the other. We observe that
\begin{equation}  \label{alpha_i}
\alpha^{[i]} = \left|\overline{u}^{[i]}\right| = \left|a\,\frac{z_{ i} - e^{a(t_i-t_{i-1})} z_{ {i-1}}}{1 - e^{a(t_i-t_{i-1})}}\right|\,.
\end{equation}
\proofbox
\end{remark}

It is interesting to compare the residuals which solve Problems~(PL2) and (PLinf), respectively.  So we next provide a companion to Propositions~\ref{simp_fact_L2}--\ref{simp_fact_Linf}.

Let $u_{L^2}(t)$ denote $u(t)$ given in \eqref{simp_res_L2}.  Recall that $\alpha^{[i]} = |u^{[i]}(t)|$, where $u^{[i]}(t)$ is given in \eqref{simp_res_Linfty}.  The following fact establishes a relationship between the (pointwise) residuals minimizing their $L^2$- and stage $L^\infty$-norms, respectively.
\begin{proposition}[Comparison of stage \boldmath{$L^\infty$}- and \boldmath{$L^2$}-residuals]  \label{prop:relation}
For $i = 1,\ldots,N$, one has that
\begin{equation} \label{relation}
\max_{t_{i-1}\le t\le t_i} |u_{L^2}(t)| = \left\{\begin{array}{ll}
\ds\frac{2}{1 + e^{a(t_i-t_{i-1})}}\,\alpha^{[i]}, & \mbox{ if\ \ } a<0\,, \\[4mm]
\ds\frac{2\,e^{a(t_i-t_{i-1})}}{1 + e^{a(t_i-t_{i-1})}}\,\alpha^{[i]}, & \mbox{ if\ \
} a>0\,.
\end{array}\right.
\end{equation}
It follows that
\begin{equation} \label{eqn:compare}
\alpha^{[i]} < \max_{t_i\le t\le t_{i+1}} |u_{L^2}(t)|
\end{equation}
for any $a\neq0$.
\end{proposition}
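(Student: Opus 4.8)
The plan is to insert the explicit residuals \eqref{simp_res_L2} and \eqref{simp_res_Linfty} into the two quantities appearing in \eqref{relation}, verify that after cancellation of a common skeleton-dependent factor their ratio is exactly the elementary coefficient displayed there, and then deduce \eqref{eqn:compare} from the observation that this coefficient always exceeds $1$. Throughout it is convenient to abbreviate $\tau_i := t_i - t_{i-1} > 0$ and $E := e^{a\tau_i} > 0$, so that on the $i$th stage \eqref{simp_res_L2} reads $u_{L^2}(t) = -2a\,(z_i - E z_{i-1})\,e^{a(t_i-t)}/(1 - E^2)$, while, by \eqref{simp_res_Linfty} (equivalently, by evaluating \eqref{simp_x_Linfty} at $t=t_i$), $\alpha^{[i]} = |\overline{u}^{[i]}| = |a|\,|z_i - E z_{i-1}|/|1 - E|$. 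The point to notice is that both residuals are constant multiples of one and the same linear combination $z_i - E z_{i-1}$ of the skeleton values.

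First I would locate the maximum of $|u_{L^2}(t)|$ over the $i$th stage. Since $|u_{L^2}(t)|$ is a fixed positive multiple of $e^{a(t_i-t)}$, and the latter is monotone in $t$ on $[t_{i-1},t_i]$ --- decreasing when $a>0$ and increasing when $a<0$, exactly as recorded in Remark~\ref{rem:u_L2} --- the maximum is attained at an endpoint: at $t=t_{i-1}$, where $e^{a(t_i-t)} = E$, when $a>0$, and at $t=t_i$, where $e^{a(t_i-t)}=1$, when $a<0$. Hence
\[
\max_{t_{i-1}\le t\le t_i}|u_{L^2}(t)| =
\begin{cases}
\dfrac{2\,|a|\,E\,|z_i - E z_{i-1}|}{|1-E^2|}, & a>0, \\[2mm]
\dfrac{2\,|a|\,|z_i - E z_{i-1}|}{|1-E^2|}, & a<0.
\end{cases}
\]

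Second I would form the quotient of this with $\alpha^{[i]}$. The factors $|a|$ and $|z_i - E z_{i-1}|$ cancel; writing $|1-E^2| = |1-E|\,(1+E)$ (legitimate because $E>0$), the surviving factor $|1-E|$ cancels as well, leaving the coefficient $2E/(1+E)$ when $a>0$ and $2/(1+E)$ when $a<0$. Re-inserting $E=e^{a(t_i-t_{i-1})}$ produces exactly \eqref{relation}. Here one implicitly uses $\alpha^{[i]}>0$, i.e.\ $z_i \neq e^{a\tau_i}z_{i-1}$, so that dividing by $\alpha^{[i]}$ is legitimate; this is guaranteed by the non-singularity of the optimal control established in the proof of Proposition~\ref{simp_fact_Linf}, which rules out the degenerate case in which the skeleton already lies on the exact solution over the stage.

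Finally, \eqref{eqn:compare} drops out of \eqref{relation}: setting $w := e^{a\tau_i}$, one has $2w/(1+w) > 1 \iff w > 1$ and $2/(1+w) > 1 \iff w < 1$, and since $\tau_i>0$ we have $w>1$ for $a>0$ and $w<1$ for $a<0$, so in either case the coefficient in \eqref{relation} is strictly greater than $1$ (indeed strictly between $1$ and $2$), which is precisely the asserted strict inequality \eqref{eqn:compare}. There is no genuine obstacle in this argument; the only points demanding a little care are the case split that pins down at which endpoint $|u_{L^2}|$ is largest, and the bookkeeping of absolute values in the cancellation, where the factorization $1-E^2=(1-E)(1+E)$ is exactly what makes the skeleton-dependent data disappear and leaves a clean function of $a(t_i-t_{i-1})$.
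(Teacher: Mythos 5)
Your proof is correct and follows essentially the same route as the paper: use the monotonicity of $|u_{L^2}(t)|$ from Remark~\ref{rem:u_L2} to locate the maximum at the appropriate endpoint, factor $1-e^{2a(t_i-t_{i-1})}=(1-e^{a(t_i-t_{i-1})})(1+e^{a(t_i-t_{i-1})})$ to expose $\alpha^{[i]}$ via \eqref{alpha_i}, and then check that the resulting coefficient exceeds $1$ in both sign cases. The only cosmetic difference is that you phrase the middle step as a ratio (dividing by $\alpha^{[i]}$) rather than simply factoring $\alpha^{[i]}$ out as the paper does, which is why you needed the side remark about the degenerate case $z_i=e^{a(t_i-t_{i-1})}z_{i-1}$ that the paper leaves implicit.
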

\begin{proof}
Suppose that $a<0$. Then, since $|u_{L^2}|$ is increasing over $[t_{i-1},t_i]$ by Remark~\ref{rem:u_L2},
\begin{eqnarray*}
\max_{t_{i-1}\le t\le t_i} |u_{L^2}(t)| &=& |u_{L^2}(t_i)| =
2\,\left|a\,\frac{z_i - e^{a(t_i-t_{i-1})} z_{i-1}}{1 -
    e^{2a(t_i-t_{i-1})}}\right| \\
&=& \frac{2}{1 + e^{a(t_i-t_{i-1})}}\,\left|a\,\frac{z_i -
    e^{a(t_i-t_{i-1})} z_{i-1}}{1 - e^{a(t_i-t_{i-1})}}\right| \\
&=& \frac{2}{1 + e^{a(t_i-t_{i-1})}}\,\alpha^{[i]}\,,
\end{eqnarray*}
by using \eqref{alpha_i}.  Suppose that $a>0$.  In this case, since $|u_{L^2}|$ is decreasing over $[t_{i-1},t_i]$ by Remark~\ref{rem:u_L2},
\begin{eqnarray*}
\max_{t_{i-1}\le t\le t_i} |u_{L^2}(t)| &=& |u_{L^2}(t_{i-1})| =
2\,\left|a\,\frac{z_i - e^{a(t_i-t_{i-1})} z_{i-1}}{1 -
    e^{2a(t_i-t_{i-1})}}\right|\,e^{a(t_i-t_{i-1})} \\
&=& \frac{2\,e^{a(t_i-t_{i-1})}}{1 + e^{a(t_i-t_{i-1})}}\,\left|a\,\frac{z_i -
    e^{a(t_i-t_{i-1})} z_{i-1}}{1 - e^{a(t_i-t_{i-1})}}\right| \\
&=& \frac{2\,e^{a(t_i-t_{i-1})}}{1 + e^{a(t_i-t_{i-1})}}\,\alpha^{[i]}\,.
\end{eqnarray*}
This proves \eqref{relation}.  The comparison made in \eqref{eqn:compare}
can simply be furnished by showing that the coefficients of $\alpha^{[i]}$
displayed in \eqref{relation} are greater than 1: if $a<0$, then $1 +
e^{a(t_i-t_{i-1})}<2$; if $a>0$, then $2\,e^{a(t_i-t_{i-1})} =
e^{a(t_i-t_{i-1})} + e^{a(t_i-t_{i-1})} > 1 + e^{a(t_i-t_{i-1})}$.
\end{proof}

\begin{remark}[Residuals for large \boldmath{$a$}]  \rm
As $|a|(t_{i+1}-t_i)\to\infty$, we have that $\max_{t_i\le t\le t_{i+1}} |u_{L^2}(t)|\to 2\alpha^{[i]}$.  Therefore, for relatively large values of growth rate $|a|$ in the Dahlquist test problem in \eqref{simpleeqn} and the step-size $(t_{i+1}-t_i)$, the stage $L^\infty$-minimization of the residual (with pointwise values of at most $\alpha^{[i]}$) stands as a far better approach to use than the $L^2$-minimization of the residual (with pointwise values approaching $2\,\alpha^{[i]}$).
\proofbox
\end{remark}

\subsubsection{A graphical illustration of the residuals}

Figure~\ref{fig:ex1} depicts the residual graphs for various criteria with $a=3$, $z_0 = 1$, and $t_f = 1$, for the Dahlquist test problem stated in \eqref{simpleeqn}.  To obtain the numerical skeleton, the popular {\sc Matlab} solvers {\tt ode15s}, {\tt ode45} and {\tt ode113} (see \cite{ShaRei1997}) have been used.  The number of mesh points corresponding to each solver turns out to be $N = 78$, $37$ and $37$, respectively.  With each numerical skeleton, approximating curves were generated using {\sc Matlab}'s {\tt deval}, as well as through the $L^2$- and stage $L^\infty$-minimization of the ODE residual, as given in \eqref{simp_x_L2} and~\eqref{simp_x_Linfty}.

The residuals for each case and approach, nine of them altogether, are depicted in Figure~\ref{fig:ex1}, where the residual $u_M(t)$ is produced using {\tt deval}, and the residuals $u_{L^2}(t)$ and $u_{L^\infty}(t)$ of $L^2$- and stage $L^\infty$-minimization are computed as expressed in \eqref{simp_res_L2} and \eqref{simp_res_Linfty}, respectively.  { The reader can have access to the {\sc Matlab} code ({\tt residuals{\_}zdot{\_}az.m}) that generated the graphs in Figure~\ref{fig:ex1} at \url{https://github.com/rcorless/OptimalResiduals}.}

First of all, we observe that \eqref{eqn:compare} is verified in each of the graphs in Figure~\ref{fig:ex1}{\color{black}(f)--(j).  In the graphs in Figure~\ref{fig:ex1}(e)--(h), the worst residuals look indistinguishable.} The graphs in~Figure~\ref{fig:ex1} further show that, with {\tt ode15s} and {\tt ode45}, both $\max_{0\le t\le 1}|u_{L^2}(t)|$ and $\max_{0\le t\le 1}|u_{L^\infty}(t)|$ are better, i.e. smaller, than $\max_{0\le t\le 1}|u_M(t)|$.  With {\tt ode113}, however, $\max_{0\le t\le 1}|u_M(t)|$ turns out to be the smallest.

This is surprising: the polynomial approximation from the code has produced a residual that is (apparently) \textsl{smaller} than the minimum possible.  The resolution is that the approximation from \texttt{deval} for the solution by \texttt{ode113} is not continuous.  As previously stated, the minimum possible residual from a \textsl{discontinuous} approximation is, in fact, zero, by using the local exact solution as the approximation.

Using discontinuous approximations $z(t)$ introduces Dirac delta functions into the derivative $\dot z(t)$ which means that the Gr\"obner--Alexeev nonlinear variation of constant formula (equation~\eqref{eq:GroebnerAlexeev}) will also have jumps at the nodes.  This is, indeed, one classical analysis of the connection of ``local error'' to ``global error.''

For the present context, it means that the residual computed by the approximation produced by \texttt{deval} for the solution by \texttt{ode113} is not reliable.  Our analysis gives a reliable estimate, in contrast.

All cases in Figure~\ref{fig:ex1} considered, the {\tt ode45} skeleton with the interpolant minimizing the stage $L^\infty$-norm of the residual in part~(i) stands out as the most satisfactory.

\afterpage{\clearpage}
\begin{figure}[h]
\begin{minipage}{52mm}
\begin{center}
\hspace*{0mm}
\includegraphics[width=55mm]{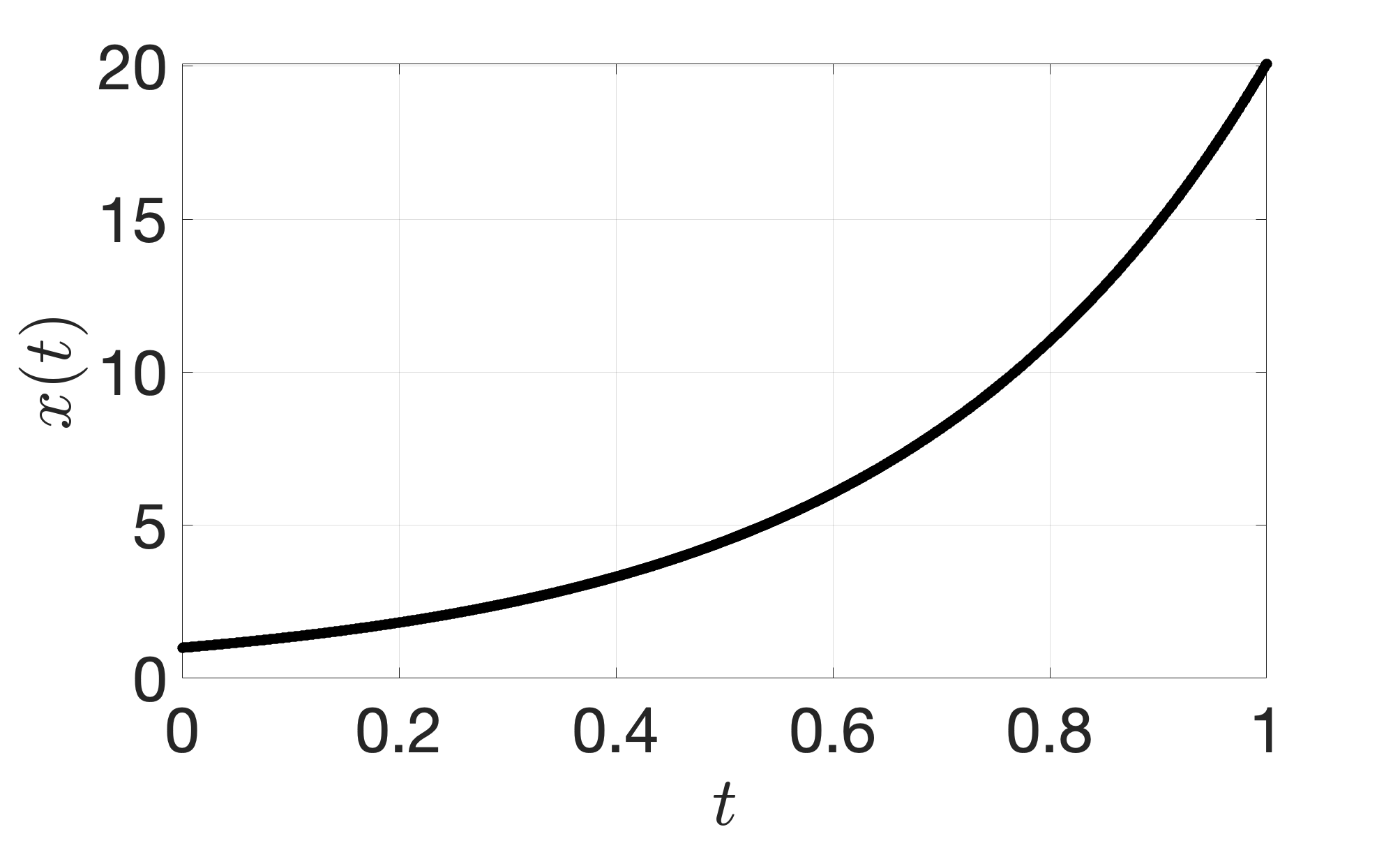} \\
\ \ \ \ \ (a)
\end{center}
\end{minipage}
\\[5mm]
\begin{minipage}{52mm}
\begin{center}
\includegraphics[width=57mm]{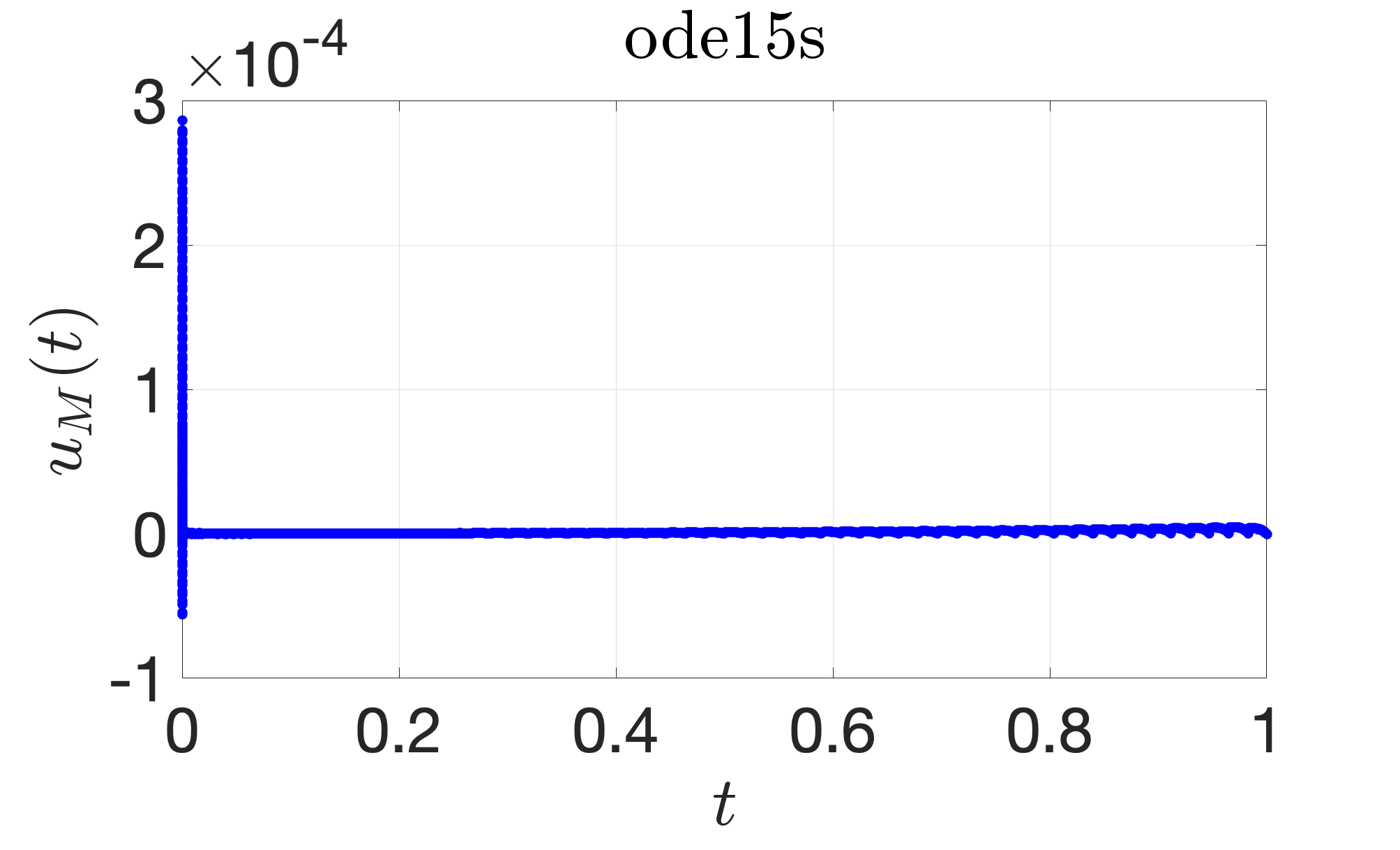} \\
\ \ \ \ \ \ (b)
\end{center}
\end{minipage}
\begin{minipage}{52mm}
\begin{center}
\includegraphics[width=57mm]{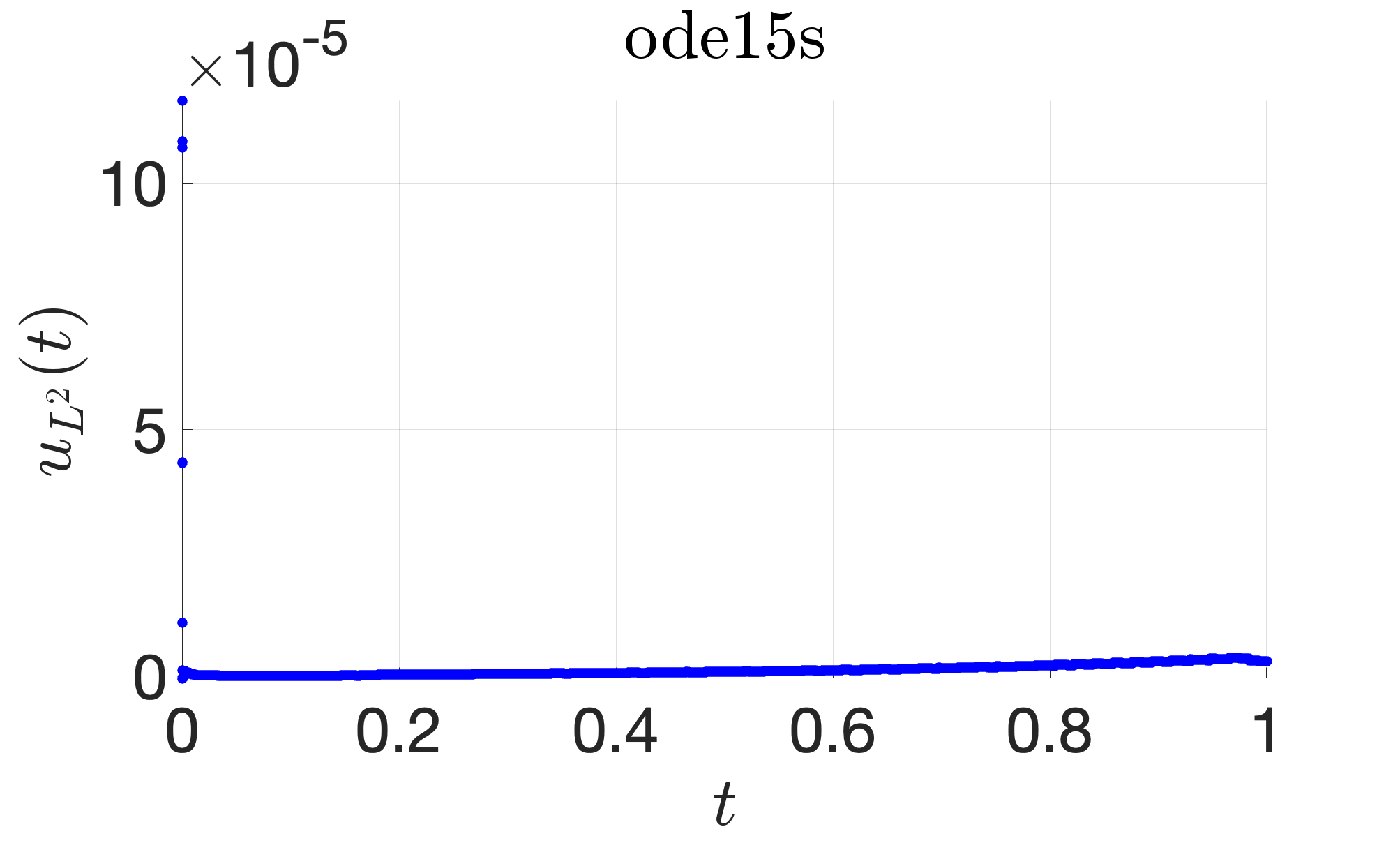} \\
\ \ \ \ \ \ (e)
\end{center}
\end{minipage}
\begin{minipage}{52mm}
\begin{center}
\includegraphics[width=57mm]{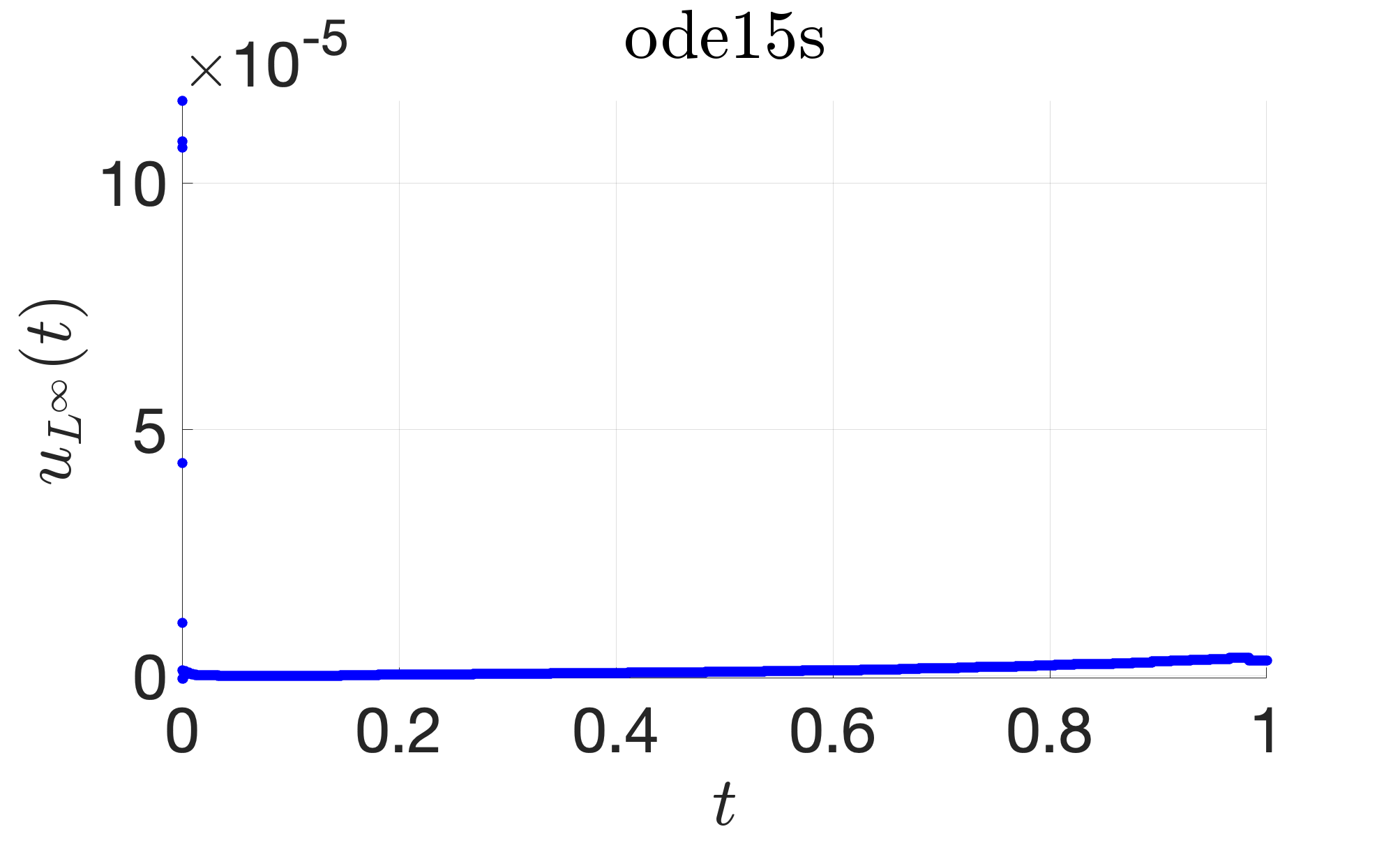} \\
\ \ \ \ \ \ (h)
\end{center}
\end{minipage}
\\[5mm]
\begin{minipage}{52mm}
\begin{center}
\includegraphics[width=57mm]{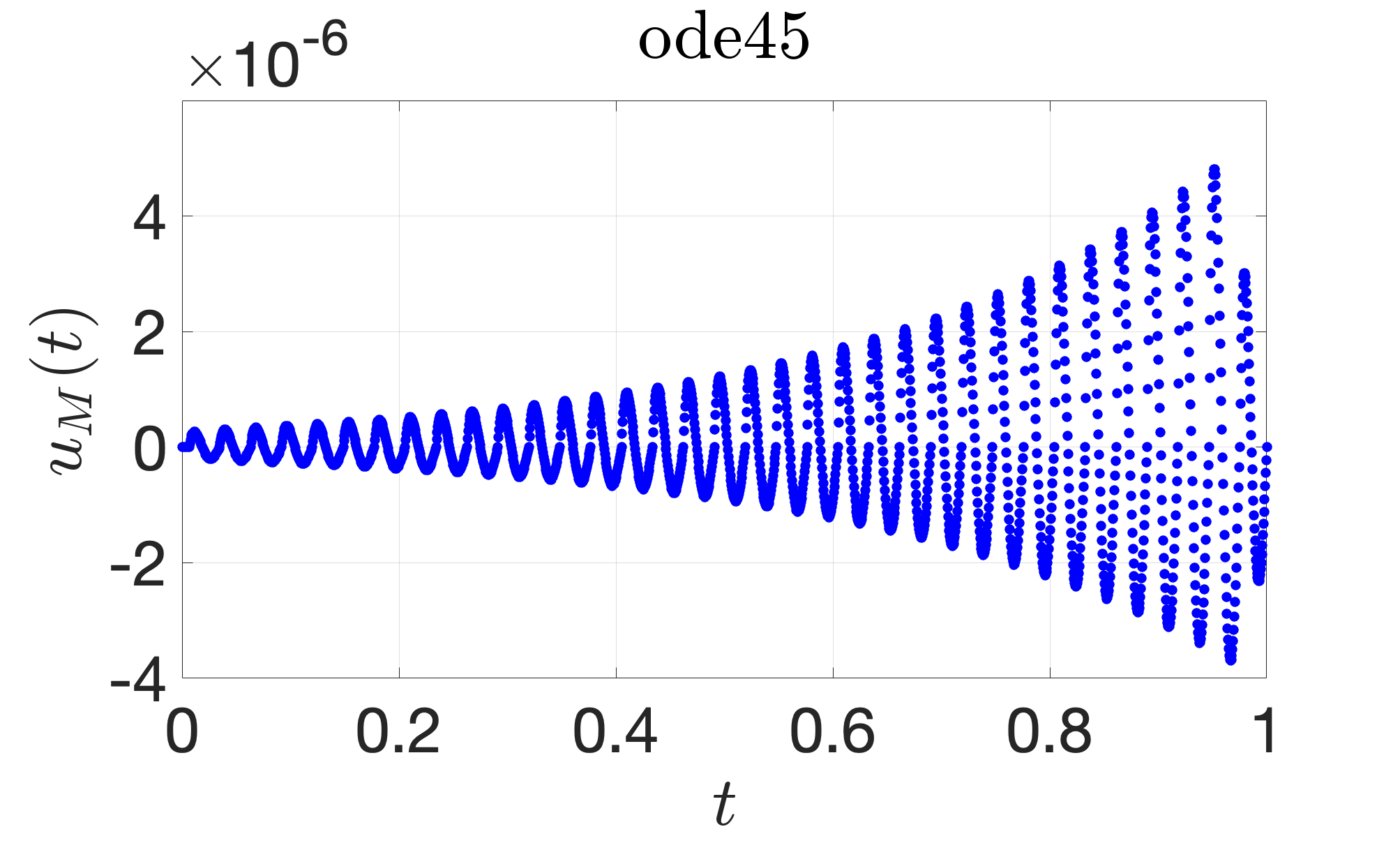} \\
\ \ \ \ \ \ (c)
\end{center}
\end{minipage}
\begin{minipage}{52mm}
\begin{center}
\includegraphics[width=57mm]{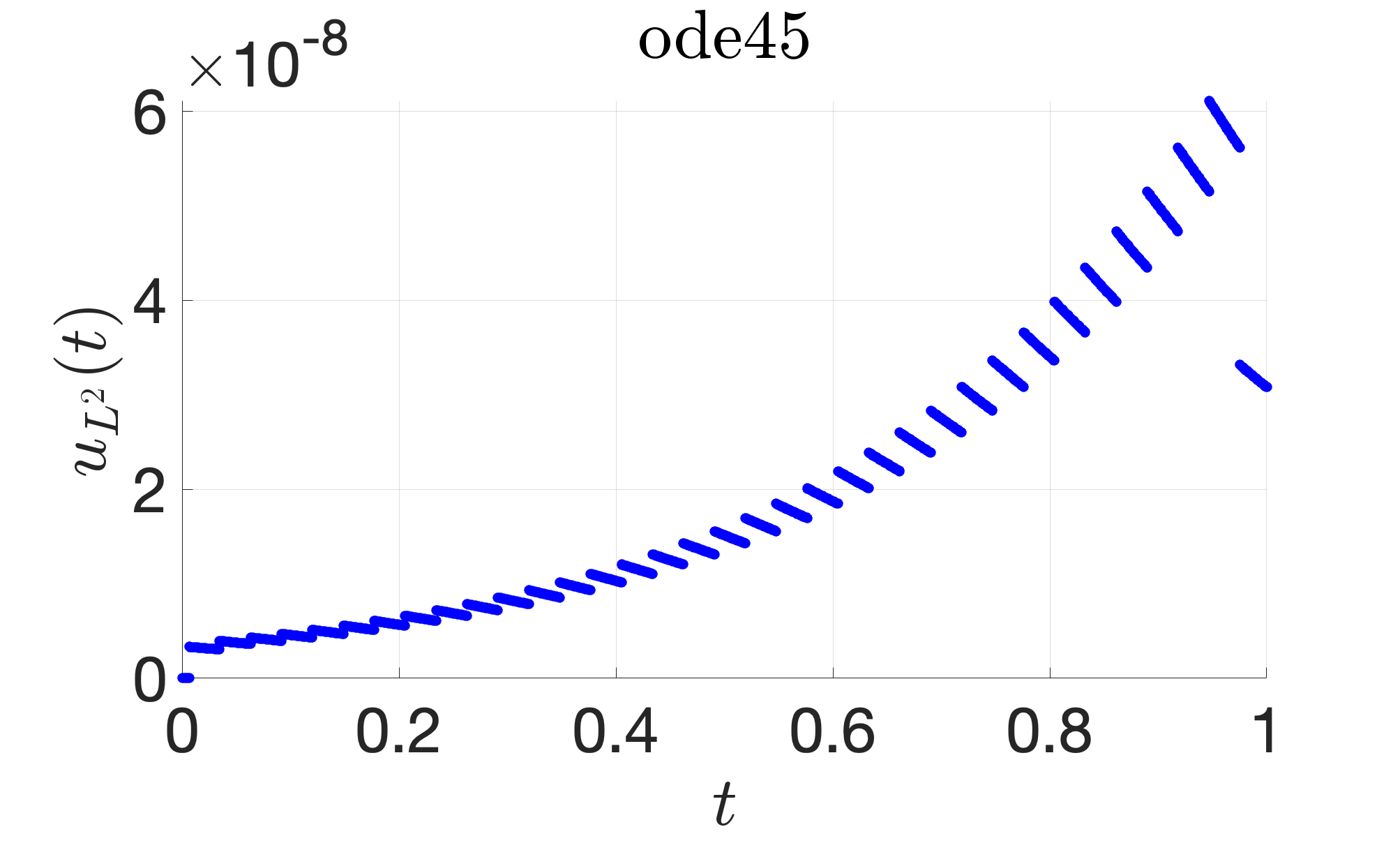} \\
\ \ \ \ \ \ (f)
\end{center}
\end{minipage}
\begin{minipage}{52mm}
\begin{center}
\includegraphics[width=57mm]{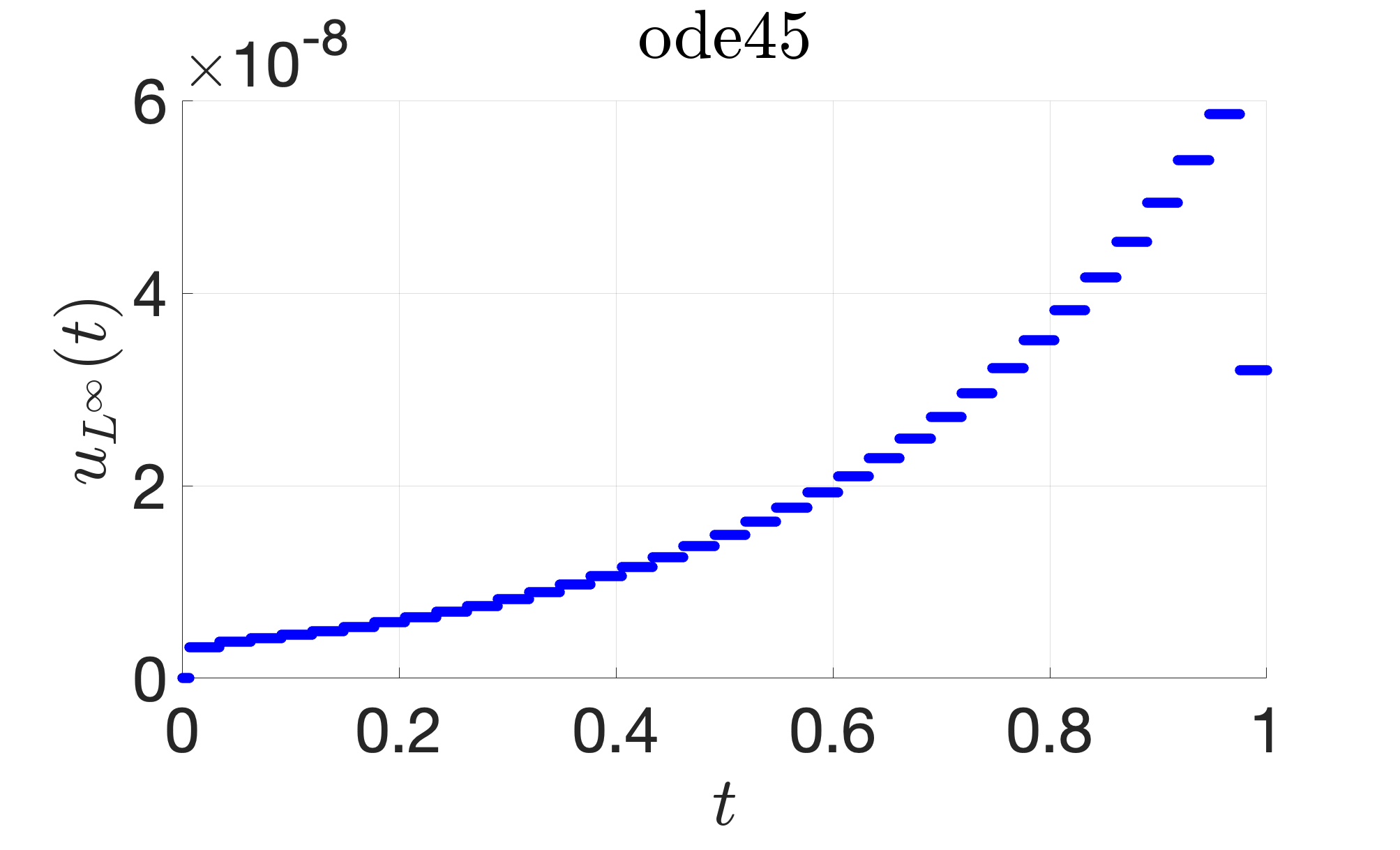} \\
\ \ \ \ \ \ (i)
\end{center}
\end{minipage}
\\[5mm]
\begin{minipage}{52mm}
\begin{center}
\includegraphics[width=57mm]{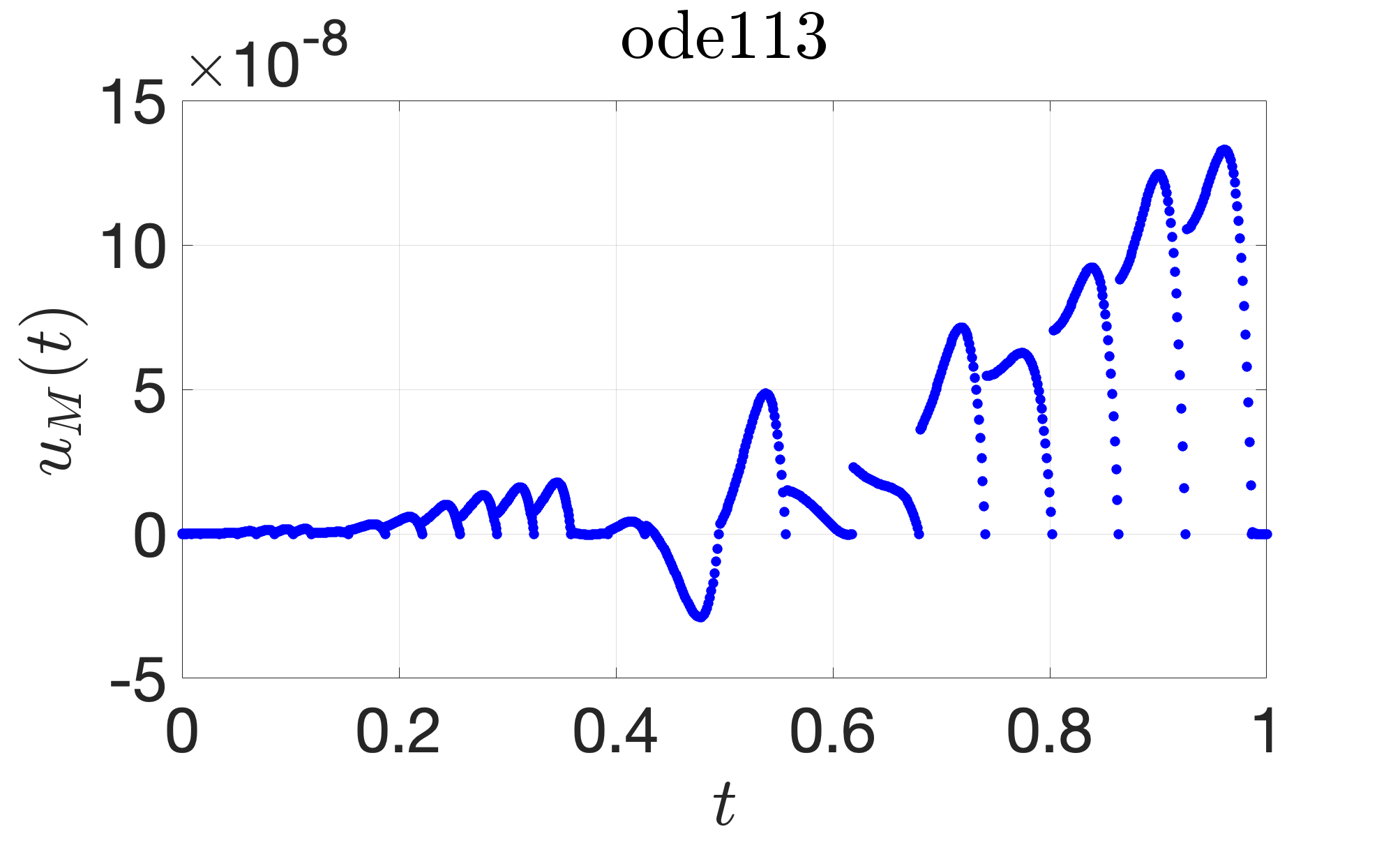} \\
\ \ \ \ \ \ (d)
\end{center}
\end{minipage}
\begin{minipage}{52mm}
\begin{center}
\includegraphics[width=57mm]{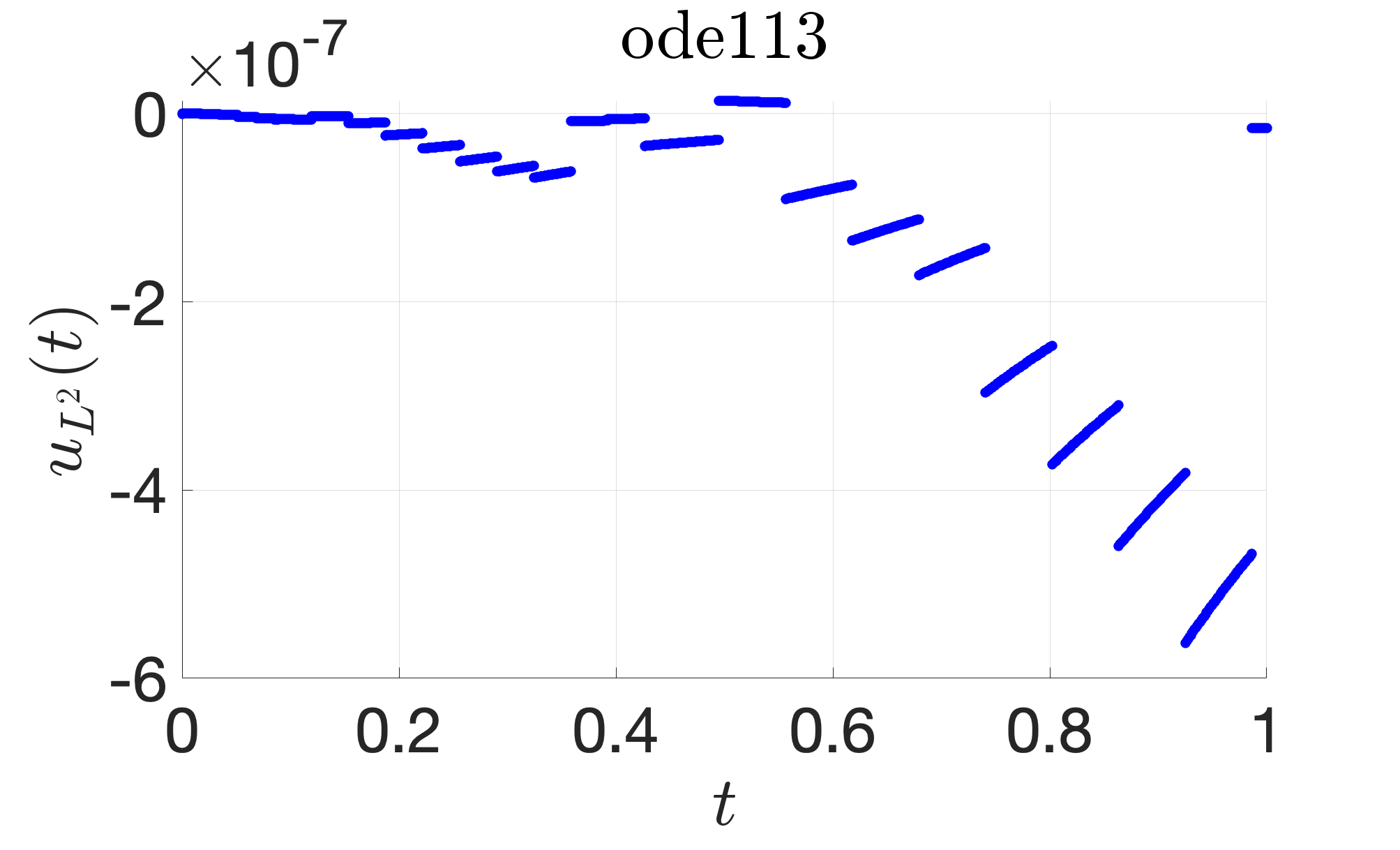} \\
\ \ \ \ \ \ (g)
\end{center}
\end{minipage}
\begin{minipage}{52mm}
\begin{center}
\includegraphics[width=57mm]{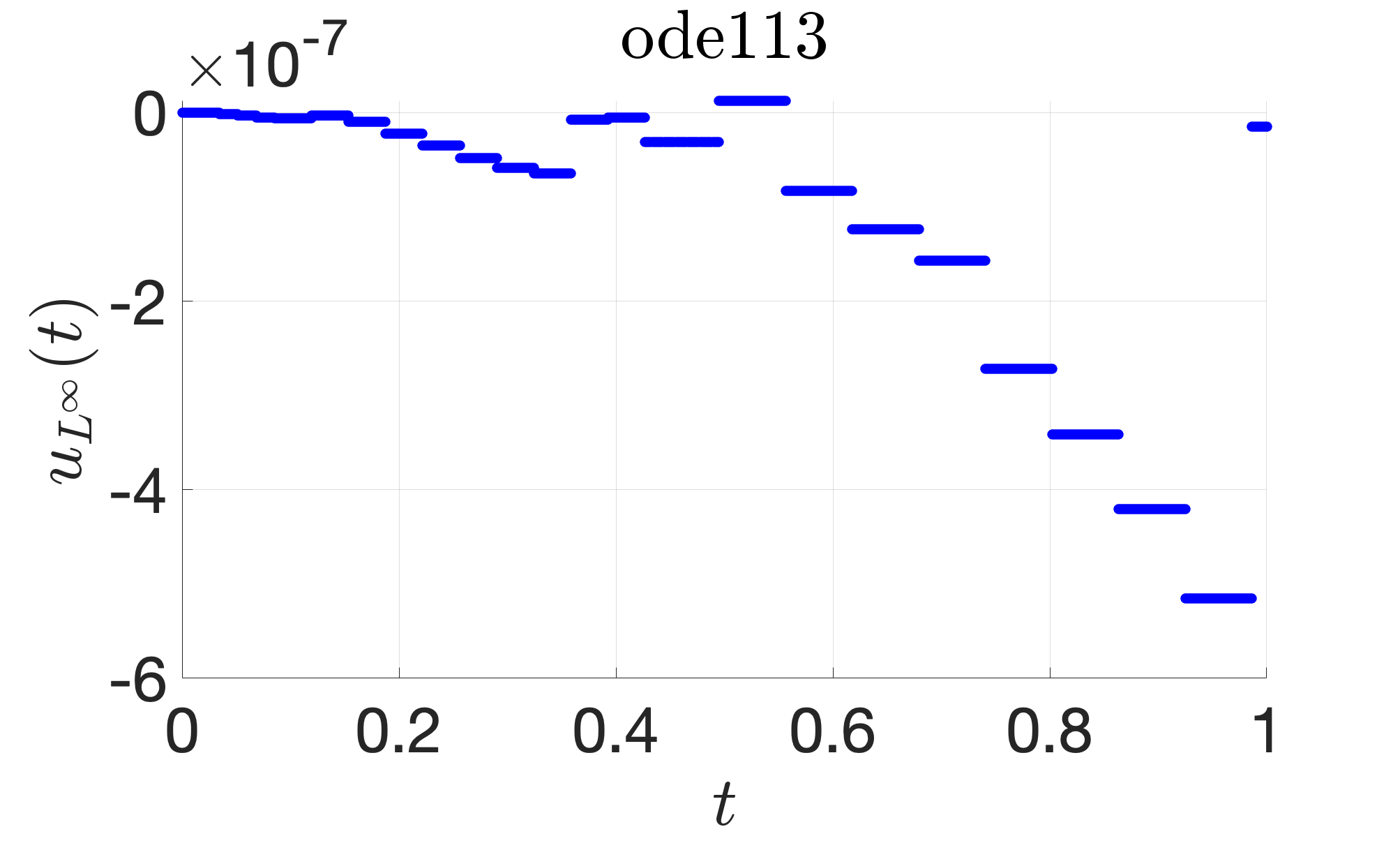} \\
\ \ \ \ \ \ (j)
\end{center}
\end{minipage}
\caption{\sf Example 1 -- (a) interpolant for $\dot{z}(t) = 3\,z(t)$, $0\le t\le1$, $z(0)=1$, and interpolant residuals computed using (b),(c),(d)~{\sc Matlab}'s {\tt deval}, (e),(f),(g) the $L^2$-residual in \eqref{simp_res_L2} and (h),(i),(j) the stage $L^\infty$-residual in \eqref{simp_res_Linfty}. The skeletons in each row were obtained using the {\sc Matlab} packages {\tt ode15s}, {\tt ode45} and {\tt ode113}, as indicated on the graphs. We see incidentally that the polynomial interpolant for \texttt{ode45} is indeed continuously differentiable, while those for \texttt{ode15s} and \texttt{ode113} are not, although the residual {\color{black} (in (d))} for \texttt{ode113} is zero as $s \to 1^-$ on each step.}
\label{fig:ex1}
\end{figure}

\newpage
\subsection{Example 2: a scalar nonlinear ODE}

Now, we consider another scalar but nonlinear ODE given by
\begin{equation}  \label{sqrteqn}
\dot{z}(t) = \sqrt{z(t)}\,,\quad z(0) = 1\,.
\end{equation}
This ODE is reminiscent of the model for the celebrated ``leaky bucket problem'' given by $\dot{z}(t) = -\sqrt{z(t)}$.  An elementary {\em relative residual analysis} of the leaky bucket problem can be found in~\cite{CorJan2016}, intended for classroom use. Because this scalar autonomous problem is so simple, all the optimization can be done analytically, and this can help students to understand the effect of numerical methods.  In this present paper we first carry out an analytical investigation similar to that of~\cite{CorJan2016}, in order to explain the idea, but then generalize the approach by using numerical optimization in order to deal with more complicated problems.

For the numerical methods \texttt{ode15s} and \texttt{ode45} this is an informative example, as we will see.  But ``unfortunately'' the numerical method that \texttt{ode113} uses actually gets the \textsl{exact} solution to this problem (apart from roundoff)---this is a coincidence, and arises in part because the exact solution is a polynomial of degree two.  Other methods, such as a second-order Taylor series method, also would get the exact solution.  This means that the minimal residual would be zero, if exact computation were used.  In practice, when we show the residuals that we actually compute, we will see only rounding error for the results of \texttt{ode113} on this example.  Nonetheless we have included the graphs for completeness.

\subsubsection{\boldmath{$L^2$}-minimization of the residual}
\label{subsubsec:sqrt_L2}

\begin{proposition}[\boldmath{$L^2$}-residual of \eqref{sqrteqn}]  \label{prop:sqrt_L2}
The interpolant, which solves Problem {\em (PL2)} with\linebreak $f(x(t),t) = \sqrt{x(t)}$, is a quadratic, namely
\begin{equation} \label{sqrt_x_L2}
x(t) = \frac{t^2}{4} + c_1\,t + c_2\,,
\end{equation}
for all $t\in[t_{i-1},t_i]$, and all $i = 1,\ldots,N$, where
\[
c_1 = \frac{1}{t_i-t_{i-1}}\,\left(z_i - z_{i-1} - \frac{t_i^2 - t_{i-1}^2}{4}\right)\quad\mbox{and}\quad c_2 = z_{i-1} - \frac{t_{i-1}^2}{4} - t_{i-1}\,c_1\,.
\]
The residual is in turn given by
\begin{equation} \label{sqrt_res_L2}
u(t) = \frac{t}{2} + c_1 - \sqrt{x(t)}\,,
\end{equation}
for all $t\in[t_{i-1},t_i]$, and all $i = 1,\ldots,N$.
\end{proposition}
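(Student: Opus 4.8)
The plan is to feed $f(x(t),t)=\sqrt{x(t)}$ into the two-point boundary-value problem of Theorem~\ref{theo:L2} and exploit a cancellation that collapses it to a trivial second-order ODE. On a stage $[t_{i-1},t_i]$ the state and costate equations read $\dot x = \sqrt{x} - \lambda$ and $\dot\lambda = -\lambda/(2\sqrt{x})$, with $u=-\lambda$ by Lemma~\ref{L2_lambda0}. Differentiating the state equation and substituting the costate equation gives
\[
\ddot x = \frac{\dot x}{2\sqrt{x}} - \dot\lambda = \frac{\dot x + \lambda}{2\sqrt{x}} = \frac{\sqrt{x}}{2\sqrt{x}} = \frac12\,,
\]
since $\dot x + \lambda = \sqrt{x}$ from the state equation. (Equivalently, one may write the Euler--Lagrange equation for the stagewise cost $\int(\dot x-\sqrt x)^2\,dt$ directly; the $\sqrt{x}$ contributions cancel and one again lands on $\ddot x = 1/2$.)

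Integrating twice on the $i$th stage yields $x(t)=t^2/4 + c_1\,t + c_2$, and imposing the interpolation data $x(t_{i-1})=z_{i-1}$, $x(t_i)=z_i$ is a $2\times2$ linear system: subtracting the two equations isolates $c_1$, and back-substitution into either one gives $c_2$, producing exactly the expressions claimed in the statement. The residual then follows immediately from its definition, $u(t)=\dot x(t)-\sqrt{x(t)} = t/2 + c_1 - \sqrt{x(t)}$, which is \eqref{sqrt_res_L2}.

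The only substantive step is the cancellation leading to $\ddot x = 1/2$; everything after it is elementary algebra of the same flavour as the proof of Proposition~\ref{simp_fact_L2}. The one technical point I would flag is that $\sqrt{x(t)}$ must remain real on each stage, i.e.\ $x(t)\ge 0$ there; this holds because the skeleton $\{z_i\}$ tracks the positive, increasing true solution $z(t)=(1+t/2)^2$ and the quadratic interpolant stays close to it, so I would insert a one-line remark to that effect. If one wished to go beyond the necessary conditions of Theorem~\ref{theo:L2}, one could add that a minimizer exists by the direct method of the calculus of variations and that the quadratic above is the unique admissible extremal; but the argument as organized does not require this.
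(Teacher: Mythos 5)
Your proposal is correct and follows essentially the same route as the paper's own proof: it specializes the TPBVP of Theorem~\ref{theo:L2} to $f(x,t)=\sqrt{x}$, differentiates the state equation and uses the costate equation so that the $\lambda$-terms cancel to give $\ddot x = 1/2$, then integrates twice, fixes $c_1,c_2$ from the stage boundary conditions, and reads off $u=-\lambda=\dot x-\sqrt{x}$. Your added remarks (the Euler--Lagrange shortcut and the positivity of $x(t)$ needed for $\sqrt{x(t)}$) are harmless refinements not present in, and not needed by, the paper's argument.
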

\begin{proof}
The two-point boundary-value problem in \eqref{L2_state_eqn}--\eqref{L2_costate_eqn} in Theorem~\ref{theo:L2} reduces, for \eqref{sqrteqn} and $i=1,\ldots,N$, to
\begin{eqnarray*}
&&\dot{x}(t) = \sqrt{x(t)} - \lambda(t)\,,\quad x(t_{i-1}) = z_{i-1}\,,\ \ x(t_i) = z_i\,, \\
&&\dot{\lambda}(t) = -\frac{1}{2\,\sqrt{x(t)}}\,\lambda(t)\,,
\end{eqnarray*}
for all $t\in[t_{i-1},t_i]$.  Also, recall from Remark~\ref{rem:L2_opt_eqns} that $u(t) = -\lambda(t)$, and that the residual function $u$ will typically have jumps at $t_i$, $i=1,\ldots,N-1$.  Taking derivatives of both sides of the state equation, one gets
\[
\ddot{x}(t) = \frac{1}{2\,\sqrt{x(t)}}\,(\sqrt{x(t)} - \lambda(t)) - \dot{\lambda}(t) =
\frac{1}{2} - \frac{1}{2\,\sqrt{x(t)}}\,\lambda(t) - \dot{\lambda}(t) = \frac{1}{2}\,,
\]
integration of which, twice, yields \eqref{sqrt_x_L2}, with the constants $c_1$ and $c_2$ obtained from the boundary conditions for the state equation.  Finally, $u(t) = -\lambda(t) = \dot{x}(t) - \sqrt{x(t)}$, with the substitution of $\dot{x}(t) = t/2 + c_1$, furnishes \eqref{sqrt_res_L2}.
\end{proof}

\subsubsection{Stage {\boldmath$L^\infty$}-minimization of the residual}
\label{subsubsec:sqrt_Linf}

\begin{proposition}[Stage \boldmath{$L^\infty$}-residual of \eqref{sqrteqn}]  \label{prop:sqrt_Linfty}
The residual $u(t) =: \overline{u}^{[i]}$, a constant, for\linebreak $t\in[t_{i-1},t_i]$, and all $i = 1,\ldots,N$, which solves~{\em(PLinf)} with $f(x(t),t) = \sqrt{x(t)}$, is given by the implicit equation
\begin{equation}  \label{sqrt_res_Linfty}
\overline{u}^{[i]}\,\ln\left(\frac{\sqrt{z_i}+\overline{u}^{[i]}}{\sqrt{z_{i-1}}+\overline{u}^{[i]}}\right) = \sqrt{z_i}-\sqrt{z_{i-1}} - \frac{t_i-t_{i-1}}{2}\,.
\end{equation}
A corresponding implicit solution for $x(t)$, for all $t\in[t_{i-1},t_i]$, is then
\begin{equation}  \label{sqrt_x_Linfty}
\sqrt{x(t)} - \overline{u}^{[i]}\,\ln\left(\frac{\sqrt{x(t)} + \overline{u}^{[i]}}{\sqrt{z_{i-1}} + \overline{u}^{[i]}}\right) = \sqrt{z_{i-1}} + \frac{t-t_{i-1}}{2}\,.
\end{equation}
\end{proposition}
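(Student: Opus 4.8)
The plan is to follow exactly the route used in the proof of Proposition~\ref{simp_fact_Linf}, combining Theorem~\ref{theo:Linf} with Lemma~\ref{lem:tot-sing}. First I would specialise the adjoint equation \eqref{Linf_adjoint_DE2} to $f(x,t)=\sqrt{x}$, obtaining $\dot\lambda(t) = -\tfrac{1}{2\sqrt{x(t)}}\,\lambda(t)$ on each stage $[t_{i-1},t_i]$. Since the exact solution of \eqref{sqrteqn} is $z(t)=(1+t/2)^2>0$ and any reasonable skeleton has $z_i>0$, the interpolant satisfies $x(t)>0$, so the coefficient $-1/(2\sqrt{x(t)})$ is finite and strictly negative and $\lambda$ is continuous and of one sign throughout the (open) stage. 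By Lemma~\ref{lem:tot-sing}, $\lambda$ is not identically zero on the stage, hence it keeps a constant nonzero sign there. Then \eqref{Linf_opt_control_u} forces $v(t)\equiv -\sgn(\lambda(t))\in\{-1,+1\}$, a constant, so $u(t)=\alpha^{[i]}v(t)=:\overline u^{[i]}$ is constant on the $i$th stage.

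Next I would substitute this constant residual into the state equation \eqref{Linf_x_eqn2}, giving the separable scalar ODE $\dot x(t)=\sqrt{x(t)}+\overline u^{[i]}$ with left-hand condition $x(t_{i-1})=z_{i-1}$. The substitution $w=\sqrt{x}$ turns this into $2w\,\dot w = w+\overline u^{[i]}$, i.e. $\bigl(2-\tfrac{2\overline u^{[i]}}{w+\overline u^{[i]}}\bigr)\,dw=dt$, which integrates to $2\sqrt{x}-2\overline u^{[i]}\ln|\sqrt{x}+\overline u^{[i]}|=t+\text{const}$. Evaluating the constant at $t=t_{i-1}$ via $x(t_{i-1})=z_{i-1}$ and rearranging (dividing by $2$ and combining the logarithms) produces precisely the implicit solution \eqref{sqrt_x_Linfty}.

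Finally, I would impose the right-hand boundary condition $x(t_i)=z_i$ in \eqref{sqrt_x_Linfty}; collecting the $\sqrt{z_i}$, $\sqrt{z_{i-1}}$ and $(t_i-t_{i-1})/2$ terms on one side and the logarithm on the other yields exactly the implicit equation \eqref{sqrt_res_Linfty} that determines $\overline u^{[i]}$, completing the proof.

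I expect the only genuine subtlety — the \emph{hard part} in the write-up — to be the bookkeeping that keeps the formulas well posed: one needs $\sqrt{x(t)}+\overline u^{[i]}$ to stay of one sign so that the logarithm is legitimate (and so that the relation \eqref{sqrt_res_Linfty} is monotone in $\overline u^{[i]}$). Since the residual of a convergent method is $O(h^p)$ while $\sqrt{z_{i-1}}\approx1$, this holds for skeletons from any reasonable solver, and it suffices to note this and drop the absolute-value bars. Everything else is the routine separable-ODE computation already rehearsed in Proposition~\ref{simp_fact_Linf}.
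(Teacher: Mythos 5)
Your proposal is correct and follows essentially the same route as the paper's own proof: use Lemma~\ref{lem:tot-sing} and \eqref{Linf_opt_control_u} to show $\lambda$ keeps one sign so the control is bang--bang and the residual is constant on each stage, then integrate the separable state equation $\dot x=\sqrt{x}+\overline u^{[i]}$ from $t_{i-1}$ to obtain \eqref{sqrt_x_Linfty}, and impose $x(t_i)=z_i$ to get \eqref{sqrt_res_Linfty}. Your added remark on keeping $\sqrt{x(t)}+\overline u^{[i]}$ of one sign so the logarithm is legitimate is a small extra care the paper leaves implicit, but otherwise the two arguments coincide.
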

\begin{proof}
The DE \eqref{Linf_adjoint_DE2} for the adjoint variable in Theorem~\ref{theo:Linf} can be written for this example simply as $\dot{\lambda}(t) = -(1/(2\,\sqrt{x(t)}))\,\lambda(t)$, the solution of which is
\[
\lambda(t) = c\,e^{-\int 1/(2\,\sqrt{x(t)})\,dt}\,,
\]
where $c$ is a real constant. Clearly, $c\neq0$, as otherwise $\lambda(t) = 0$ for all $t\in[t_{i-1},t_i]$ and so the optimal control is singular over $[t_{i-1},t_i]$, which is not possible by Lemma~\ref{lem:tot-sing}.  With $c\neq0$, one has that $\lambda(t) \neq 0$ and does not change sign; therefore, the optimal control nonsingular and $u(t) = -\sgn(\lambda(t))\,\alpha = \overline{u}^{[i]}$, a constant for all $t\in[t_{ {i-1}},t_{ i}]$.  Then the state equation in \eqref{Linf_x_eqn2} simply becomes
\[
\dot{x}(t) - \sqrt{x(t)} = \overline{u}^{[i]}\,\quad x(t_{i-1}) = z_{i-1}\,,\ \ x(t_i) = z_i\,.
\]
The DE above can be re-arranged, integrated, and manipulations can be carried out to get
\begin{eqnarray}
&& \int_{t_{i-1}}^t \frac{dx}{\sqrt{x(t)} + \overline{u}^{[i]}} = \int_{t_{i-1}}^t dt \nonumber \\[2mm]
&& 2\left[(\sqrt{x(t)} - \overline{u}^{[i]}\,\ln\left(\sqrt{x(t)} + \overline{u}^{[i]}\right) - (\sqrt{z_{i-1}} - \overline{u}^{[i]}\,\ln\left(\sqrt{z_{i-1}} + \overline{u}^{[i]}\right)\right] = t - t_{i-1} \label{x_expanded} \\
&& \sqrt{x(t)} - \sqrt{z_{i-1}} - \overline{u}^{[i]}\,\ln\left(\frac{\sqrt{x(t)} + \overline{u}^{[i]}}{\sqrt{z_{i-1}} - \overline{u}^{[i]}}\right) = \frac{t - t_{i-1}}{2} \nonumber
\end{eqnarray}
which yields~\eqref{sqrt_x_Linfty}.  Evaluating \eqref{sqrt_x_Linfty} at $t=t_i$ and re-arranging gives \eqref{sqrt_res_Linfty}.
\end{proof}

\begin{remark}[Lambert $W$ function]  \rm
Since $x(t) > 0$, after manipulations on \eqref{x_expanded}, the interpolating curve $x(t)$, implicitly given in~\eqref{sqrt_x_Linfty}, can alternatively be expressed as
\[
x(t) = \left\{-\overline{u}^{[i]}\,\left(W\left(-\frac{\overline{u}^{[i]}}{e}\left[\frac{t - t_{i-1}}{2} + \sqrt{z_{i-1}} - \overline{u}^{[i]}\,\ln\left(\sqrt{z_{i-1}} + \overline{u}^{[i]}\right)\right]^{-1/\overline{u}^{[i]}}\right) + 1\right)\right\}^2\,,
\]
for all $t\in[t_{i-1},t_i]$, $i=1,\ldots,N$, where $W(\cdot)$ is the Lambert~$W$ function. (The {\em Lambert~$W$ function} $w$ is defined implicitly as $w =W(a)$, where $w\,e^w = a$; see \cite{CorHarJefKnu96}.)
\proofbox
\end{remark}

\subsubsection{A graphical illustration of the residuals}

Figure~\ref{fig:ex2} depicts the graphs of the residuals for various criteria.  As with Example~1, to obtain the numerical skeleton, the {\sc Matlab} solvers {\tt ode15s}, {\tt ode45} and {\tt ode113} have been used.  The number of mesh points
corresponding to the solutions produced by each solver turns out to be $N = 25$, $12$ and $22$, respectively.  With each numerical skeleton, interpolating curves were generated using {\sc Matlab}'s {\tt deval}, as well as through the $L^2$- and stage $L^\infty$-minimization of the ODE residual, as given in \eqref{sqrt_x_L2} and \eqref{sqrt_x_Linfty}.  The residuals for each case and approach, nine of them altogether, are depicted in Figure~\ref{fig:ex2}, where the residuals resulting from $L^2$- and stage $L^\infty$-minimization are computed using \eqref{sqrt_res_L2} and \eqref{sqrt_res_Linfty}.  { The reader can have access to the {\sc Matlab} code ({\tt residuals{\_}zdot{\_}sqrtz.m}) that generated the graphs in Figure~\ref{fig:ex2} at \url{https://github.com/rcorless/OptimalResiduals}.}

As in Example~1, with {\tt ode15s} and {\tt ode45}, both $\max_{0\le t\le 1}|u_{L^2}(t)|$ and $\max_{0\le t\le 1}|u_{L^\infty}(t)|$ are less than $\max_{0\le t\le 1}|u_M(t)|$.
Overall, $L^\infty$-minimization seems to result in slightly smaller residuals than $L^2$-minimization.

All cases in Figure~\ref{fig:ex2} considered, the {\tt ode45} skeleton with the stage $L^\infty$-minimum interpolation facilitates the smallest residual, by an order of magnitude of two compared with the residual obtained by {\sc Matlab}'s {\tt deval}.

\afterpage{\clearpage}
\begin{figure}[t]
\begin{minipage}{52mm}
\begin{center}
\hspace*{0mm}
\includegraphics[width=55mm]{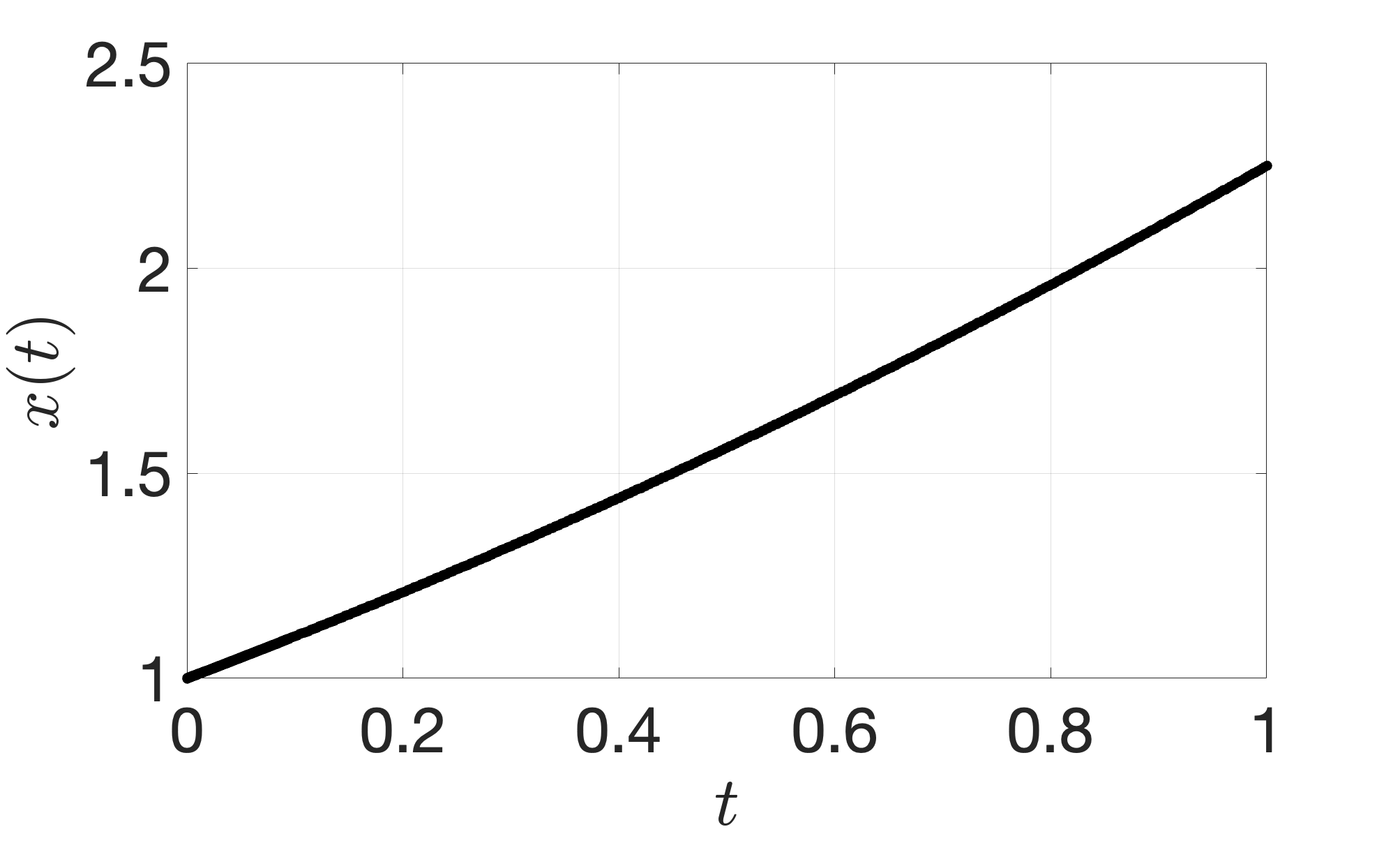} \\
\ \ \ \ \ (a)
\end{center}
\end{minipage}
\\[5mm]
\begin{minipage}{52mm}
\begin{center}
\includegraphics[width=57mm]{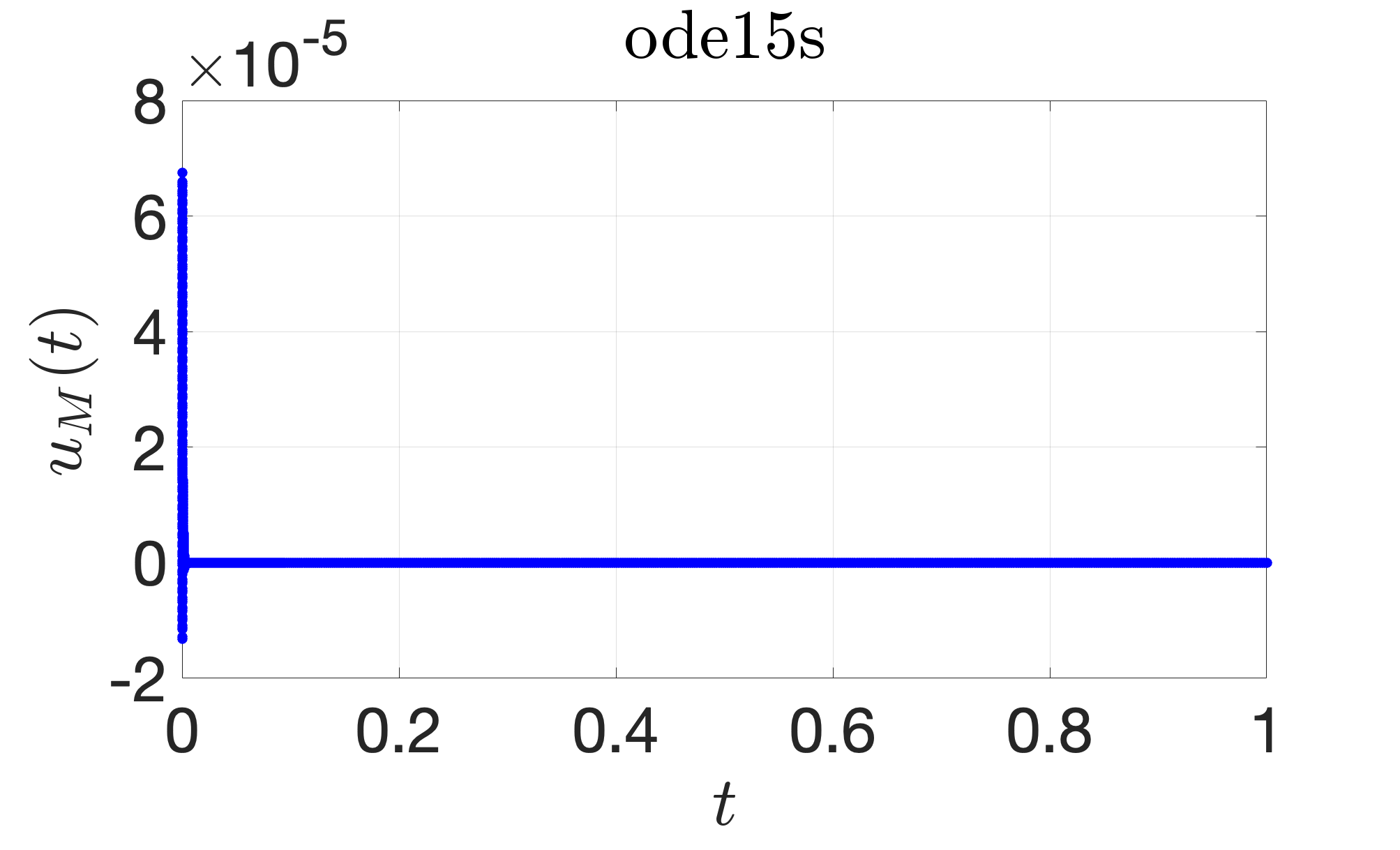} \\
\ \ \ \ \ \ (b)
\end{center}
\end{minipage}
\begin{minipage}{52mm}
\begin{center}
\includegraphics[width=57mm]{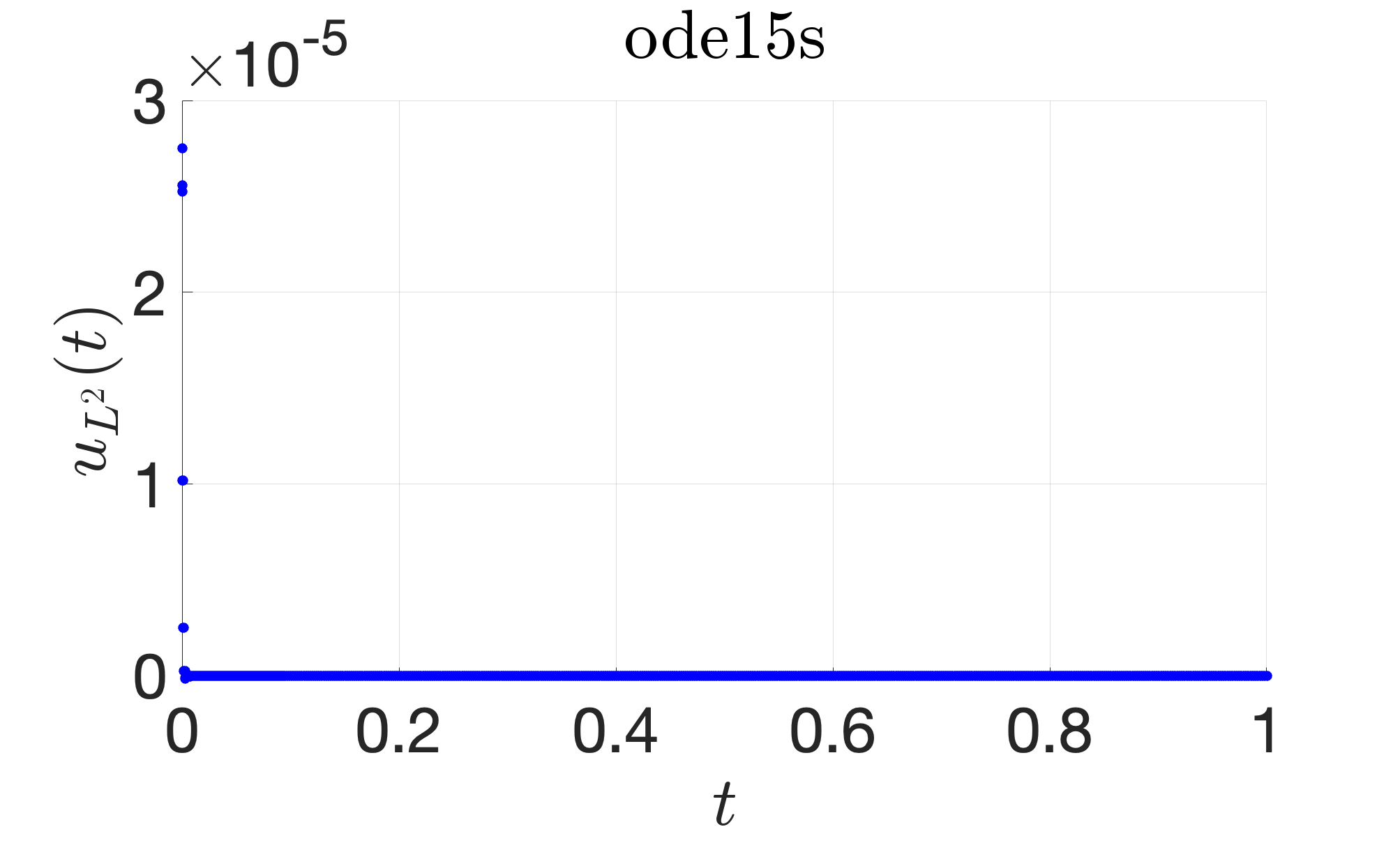} \\
\ \ \ \ \ \ (e)
\end{center}
\end{minipage}
\begin{minipage}{52mm}
\begin{center}
\includegraphics[width=57mm]{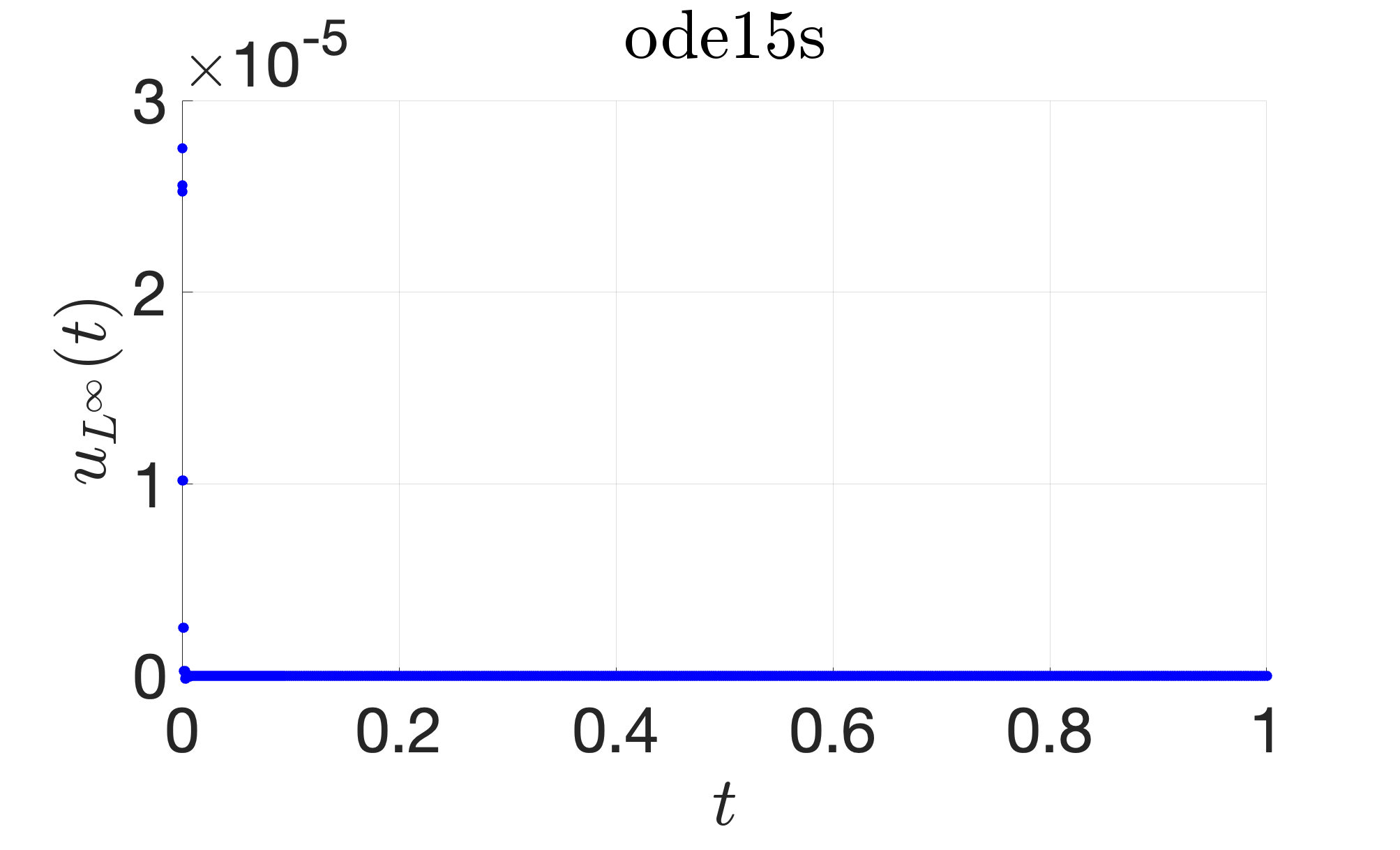} \\
\ \ \ \ \ \ (h)
\end{center}
\end{minipage}
\\[5mm]
\begin{minipage}{52mm}
\begin{center}
\includegraphics[width=57mm]{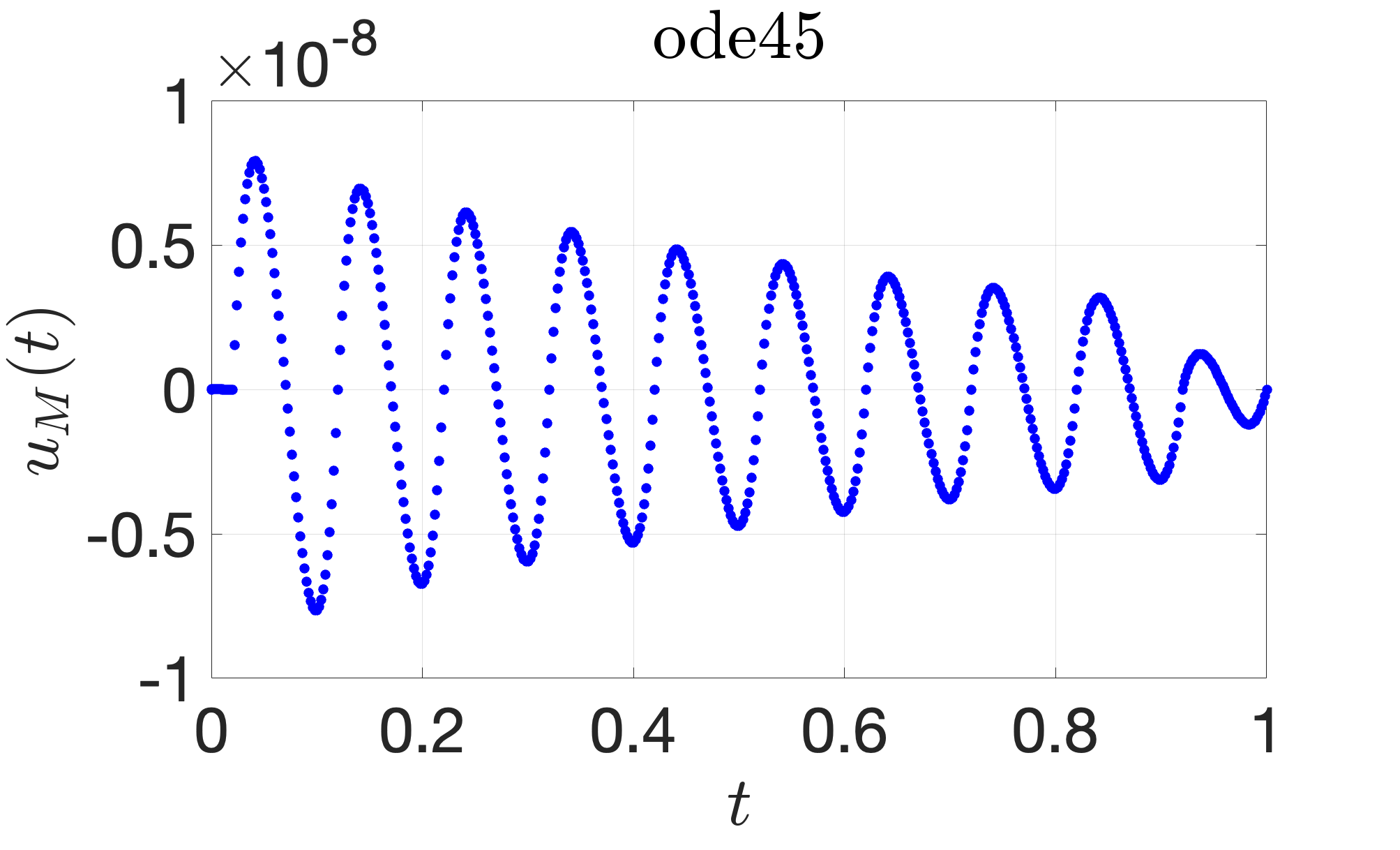} \\
\ \ \ \ \ \ (c)
\end{center}
\end{minipage}
\begin{minipage}{52mm}
\begin{center}
\includegraphics[width=57mm]{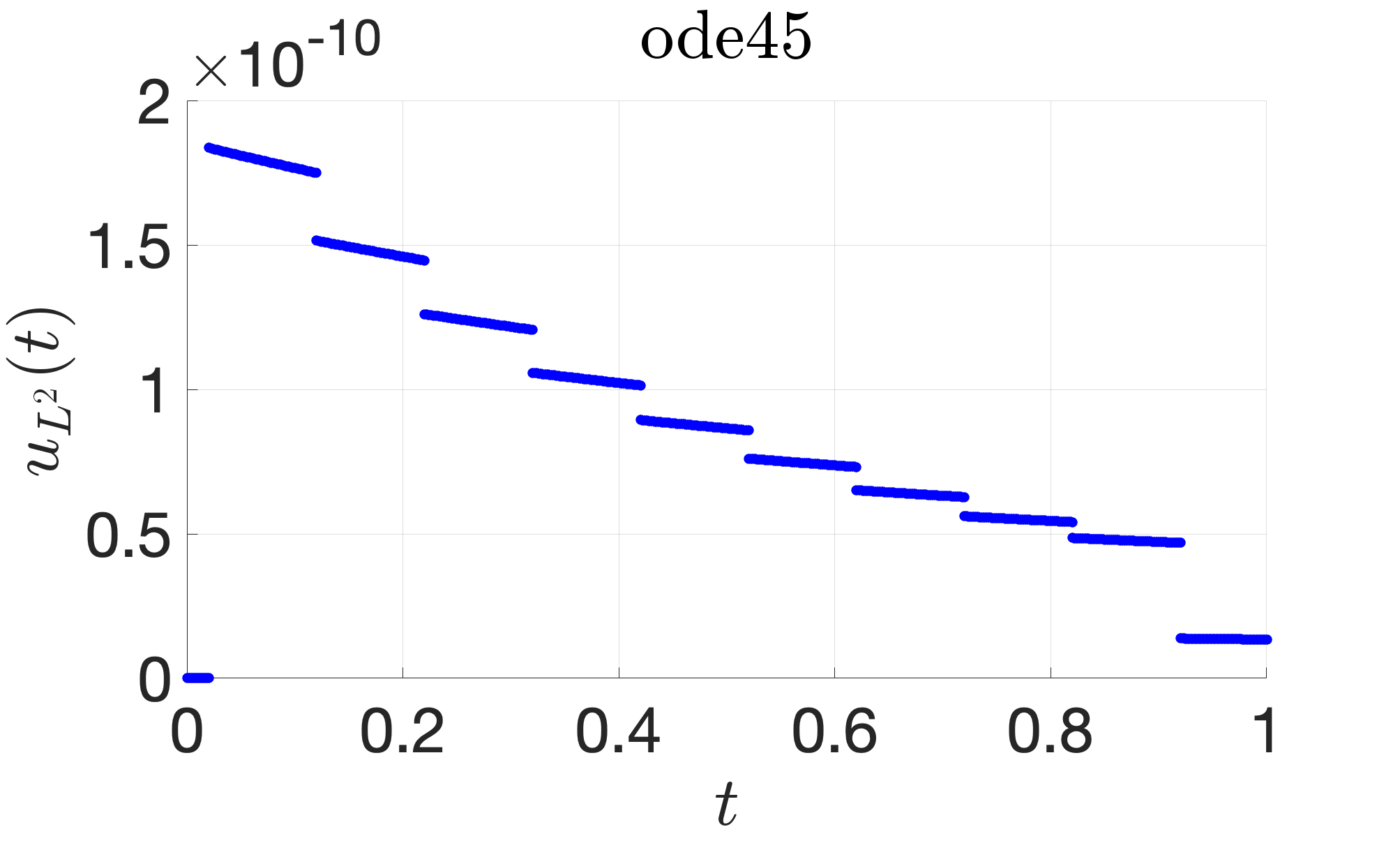} \\
\ \ \ \ \ \ (f)
\end{center}
\end{minipage}
\begin{minipage}{52mm}
\begin{center}
\includegraphics[width=57mm]{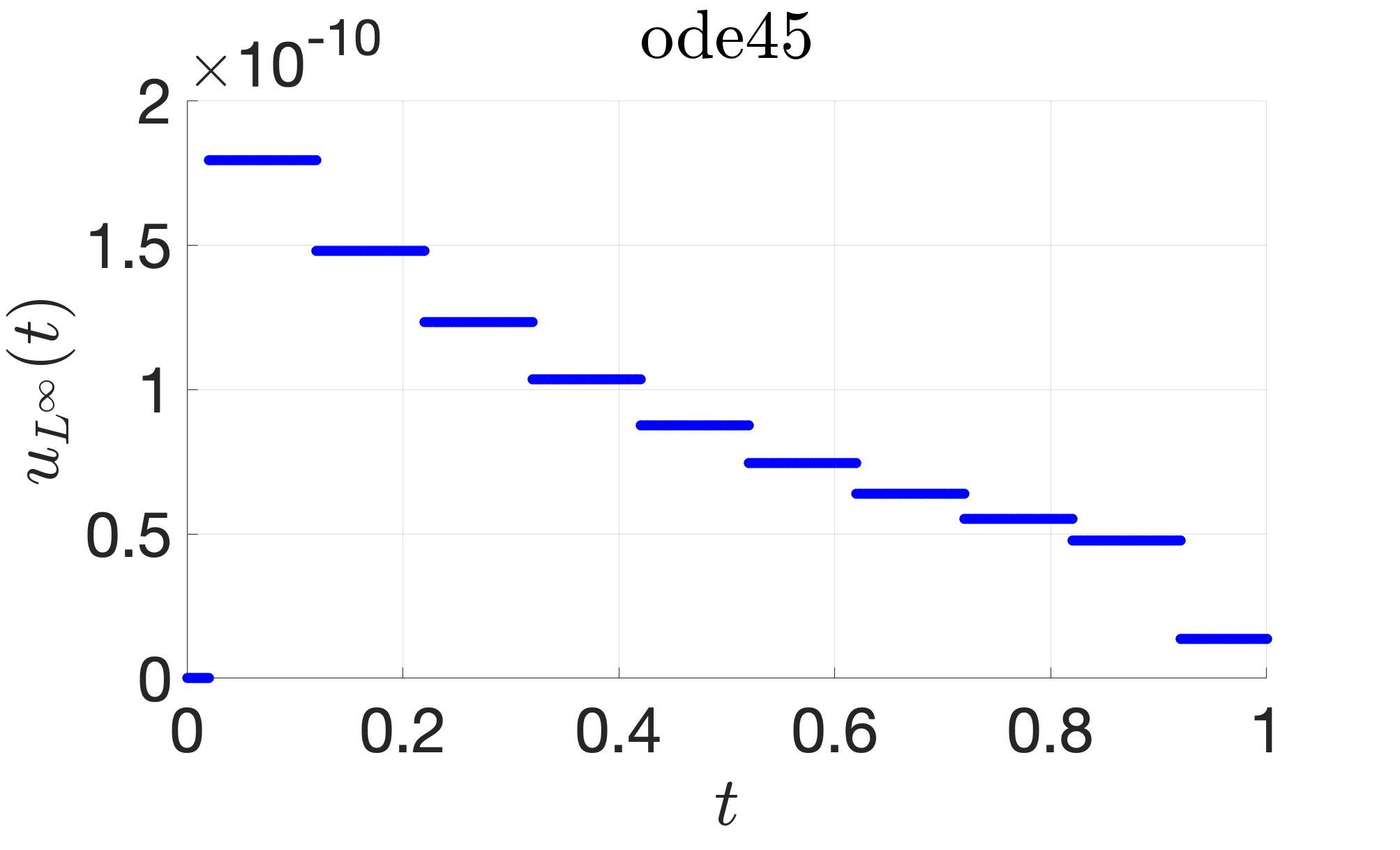} \\
\ \ \ \ \ \ (i)
\end{center}
\end{minipage}
\\[5mm]
\begin{minipage}{52mm}
\begin{center}
\includegraphics[width=57mm]{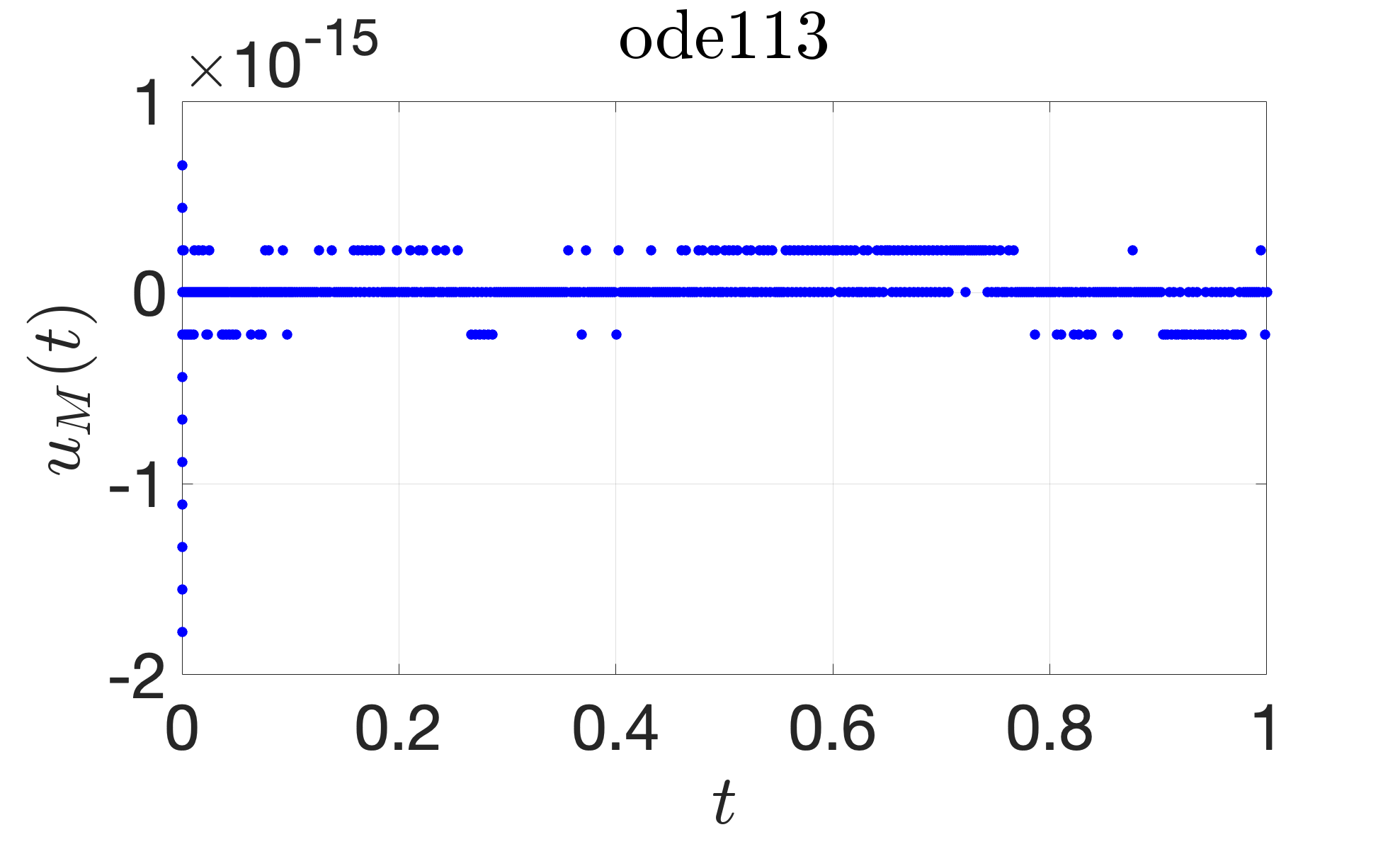} \\
\ \ \ \ \ \ (d)
\end{center}
\end{minipage}
\begin{minipage}{52mm}
\begin{center}
\includegraphics[width=57mm]{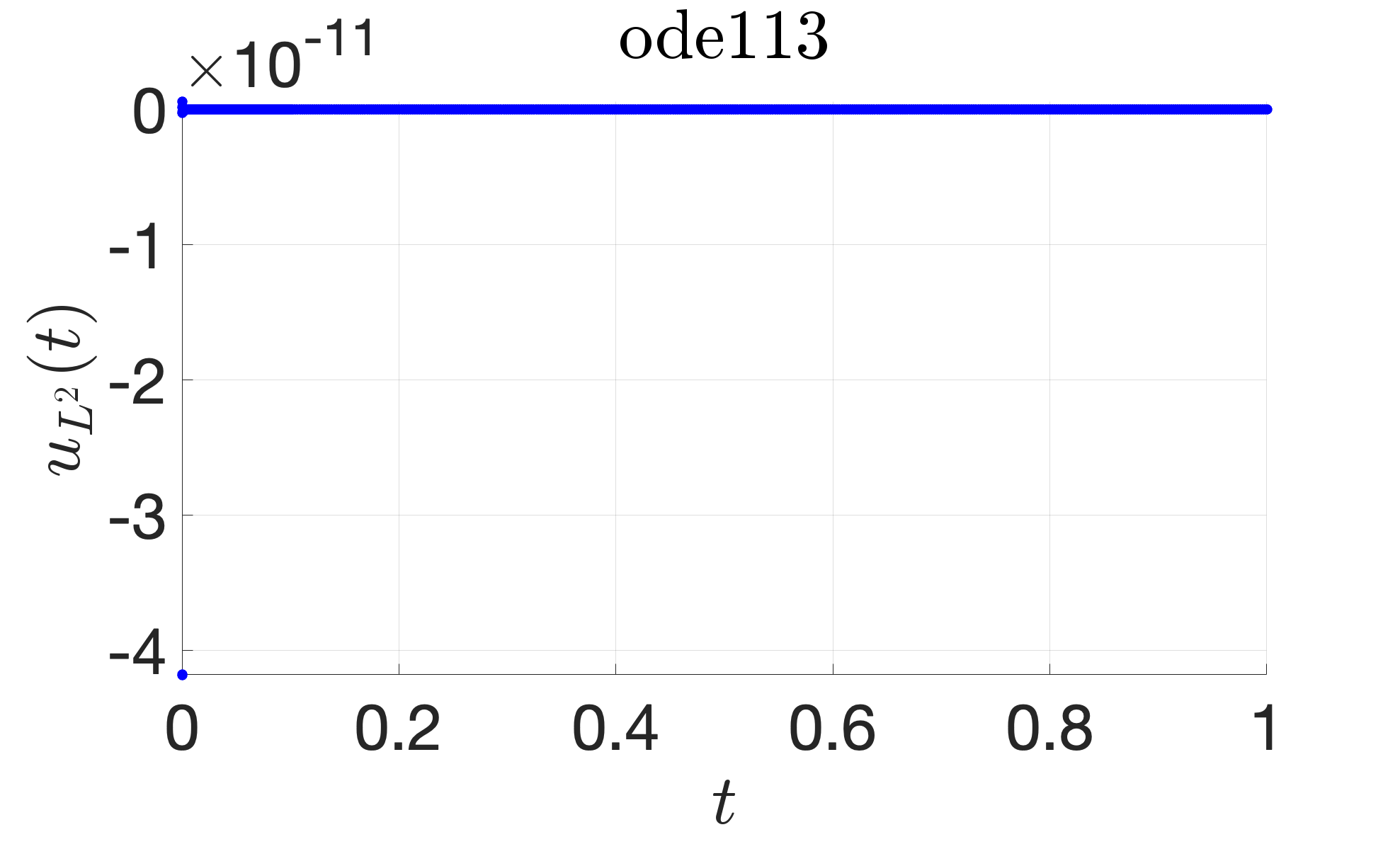} \\
\ \ \ \ \ \ (g)
\end{center}
\end{minipage}
\begin{minipage}{52mm}
\begin{center}
\includegraphics[width=57mm]{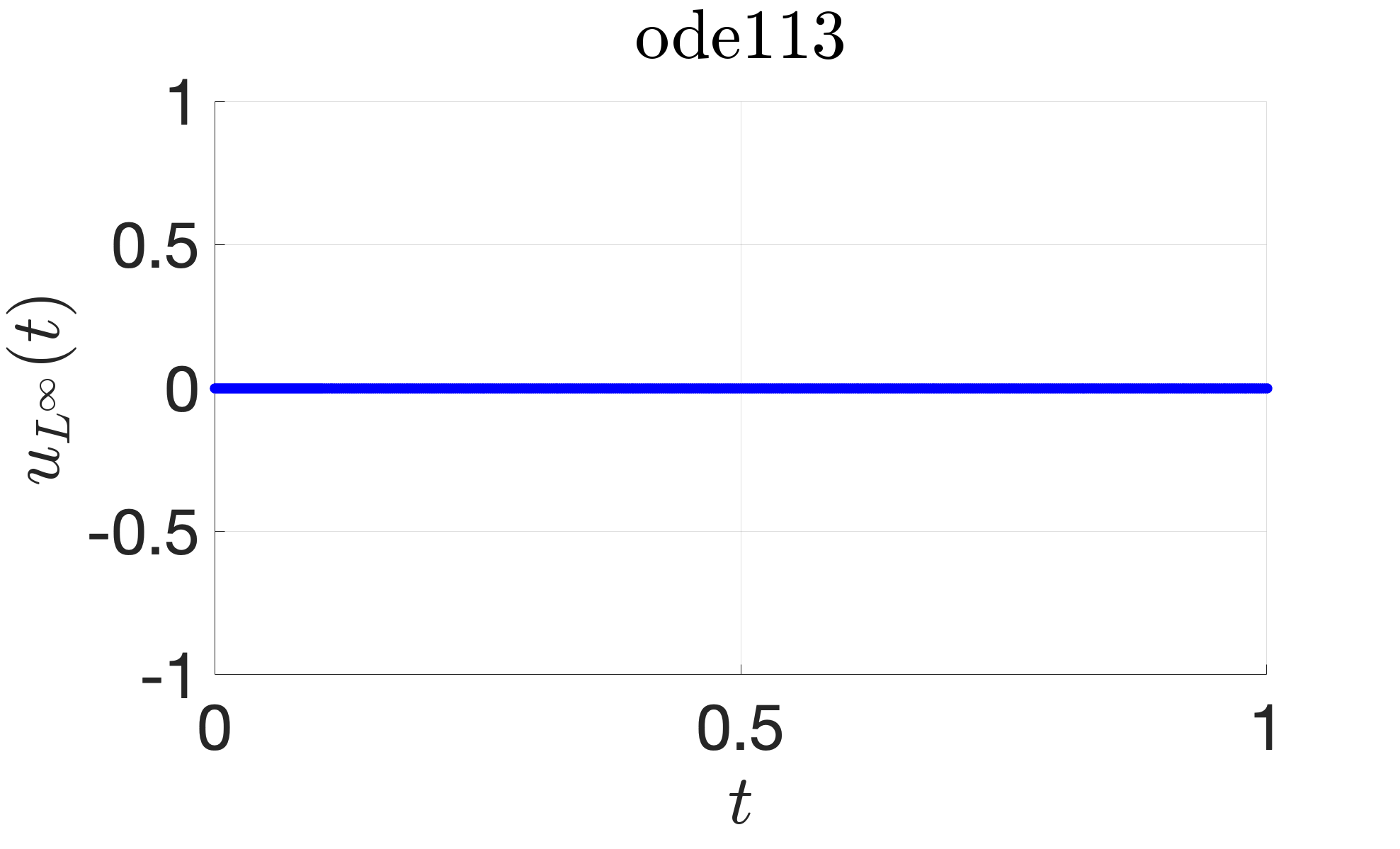} \\
\ \ \ \ \ \ (j)
\end{center}
\end{minipage}
\caption{\sf Example 2 -- (a) interpolant for $\dot{z}(t) = \sqrt{z(t)}$, $0\le t\le1$, $z(0)=1$, and interpolant residuals computed using (b),(c),(d)~{\sc Matlab}'s {\tt deval}, (e),(f),(g) the $L^2$-residual in \eqref{sqrt_res_L2} and (h),(i),(j) the stage $L^\infty$-residual in \eqref{sqrt_res_Linfty}. The skeletons in each row were obtained using the {\sc Matlab} packages {\tt ode15s}, {\tt ode45} and {\tt ode113}, as indicated on the graphs. Figures (d), (g), and (j) are not very informative, because \texttt{ode113} actually gets the exact solution to this problem: the minimal residual is zero, apart from roundoff error.}
\label{fig:ex2}
\end{figure}

\newpage
\subsection{Example 3: the Van der Pol equation}

Next, we consider the Van der Pol oscillator, a second order ODE used to model electrical circuits.  It can be represented as a system of two first-order nonlinear ODEs as follows.
\begin{equation} \label{ode:vdp}
\begin{array}{ll}
\dot{z}_1(t) = z_2(t)\,,  &\  z_1(0) = z_{1,0}\,,  \\[2mm]
\dot{z}_2(t) = -z_1(t) - (z_1(t)^2 - 1)\,z_2(t)\,, &\  z_2(0) = z_{2,0}\,,
\end{array}
\end{equation}
for $t\in[t_0,t_f]$.

Unlike Examples~1 and 2, we cannot obtain analytical solutions for the interpolating curves that minimize the $L^2$- and stage $L^\infty$-residuals of equation~\eqref{ode:vdp}.  As explained at the beginning of Section~\ref{sec:examples}, we can instead solve a direct discretization of the optimization problems~(PL2a) and (PLinfb) to get approximate solutions~\cite{Betts2020}.

For discretizing Problem~(PL2a), we use the trapezoidal rule with 2000 subintervals in each stage.  For discretizing Problem~(PLinfb), on the other hand, we employ Euler's method, since the optimal control (or the optimal residual) is in general of bang--bang type and therefore is not even continuous, making the interpolants continuous but non-differentiable.  Although Euler's method (in theory) requires differentiability of the solution curves, it has previously been successfully implemented to solve problems with bang--bang and singular types of optimal control, in particular in estimating the switching locations (or the ``switching times'') of the control function---see e.g.~\cite{BurKayMou2024, Kaya2022, KayMau2014} and the references therein.  With all the numerical skeletons generated by {\tt ode15s}, {\tt ode45} and {\tt ode113}, we use 2000 subintervals for Euler's method as well in each stage.

Figure~\ref{fig:ex3a} shows the graphs of the residuals resulting from the interpolants of the numerical skeleton of 110 nodes generated by {\tt ode15s}.  In the graphs, subscripts 1 and 2 are used to denote the two components of the residual vector.  All graphs in Figure~\ref{fig:ex3a}(a)--(f) considered, $\max_{0\le t\le 1} \|u_{L^\infty}(t)\|$ appears to be smaller than both $\max_{0\le t\le 1} \|u_M(t)\|$ and $\max_{0\le t\le 1} \|u_{L^2}(t)\|$.  The optimality condition \eqref{Linf_opt_control_u} seems to be verified:  Figure~\ref{fig:ex3b}, especially when zoomed in, shows that $(v_{L^\infty})_i(t) = \sgn(\lambda_i(t))$, $i = 1,2$, as required by the theory.    We provide the {\sc Matlab} fig as well as portable network graphics (png) files of those graphs involving switchings not only in Figure~\ref{fig:ex3b}, but also Figures~\ref{fig:ex3d} and \ref{fig:ex3f}, for the reader to be able to zoom in and explore, at \url{https://github.com/rcorless/OptimalResiduals}.

Figure~\ref{fig:ex3c} presents the graphs of the residuals resulting from the interpolants of the numerical skeleton of 36 nodes generated by {\tt ode45}.  In this case, $\max_{0\le t\le 1} \|u_{L^2}(t)\|$ is the smallest, and $\max_{0\le t\le 1} \|u_{L^\infty}(t)\|$ the largest.  We note that, as can be seen in Figure~\ref{fig:ex3d}, the optimality condition \eqref{Linf_opt_control_u} seems to be verified.

In Figure~\ref{fig:ex3e}, we provide the graphs of the residuals resulting from the interpolants of the numerical skeleton of 56 nodes generated by {\tt ode113}.  As in the case of {\tt ode45}, this time as well, $\max_{0\le t\le 1} \|u_{L^2}(t)\|$ is the smallest.  We also observe in Figure~\ref{fig:ex3f} that the graphs of the adjoint variables $\lambda_1(t)$ and $\lambda_2(t)$, and the optimal controls $v_1$ and $v_2$ seem to verify the optimality condition \eqref{Linf_opt_control_u}.

In each of the cases above, $(v_{L^\infty})_1(t)$ and $(v_{L^\infty})_2(t)$ seem to have at most one switching in each stage.

\afterpage{\clearpage}
\begin{figure}
\begin{minipage}{80mm}
\begin{center}
\includegraphics[width=80mm]{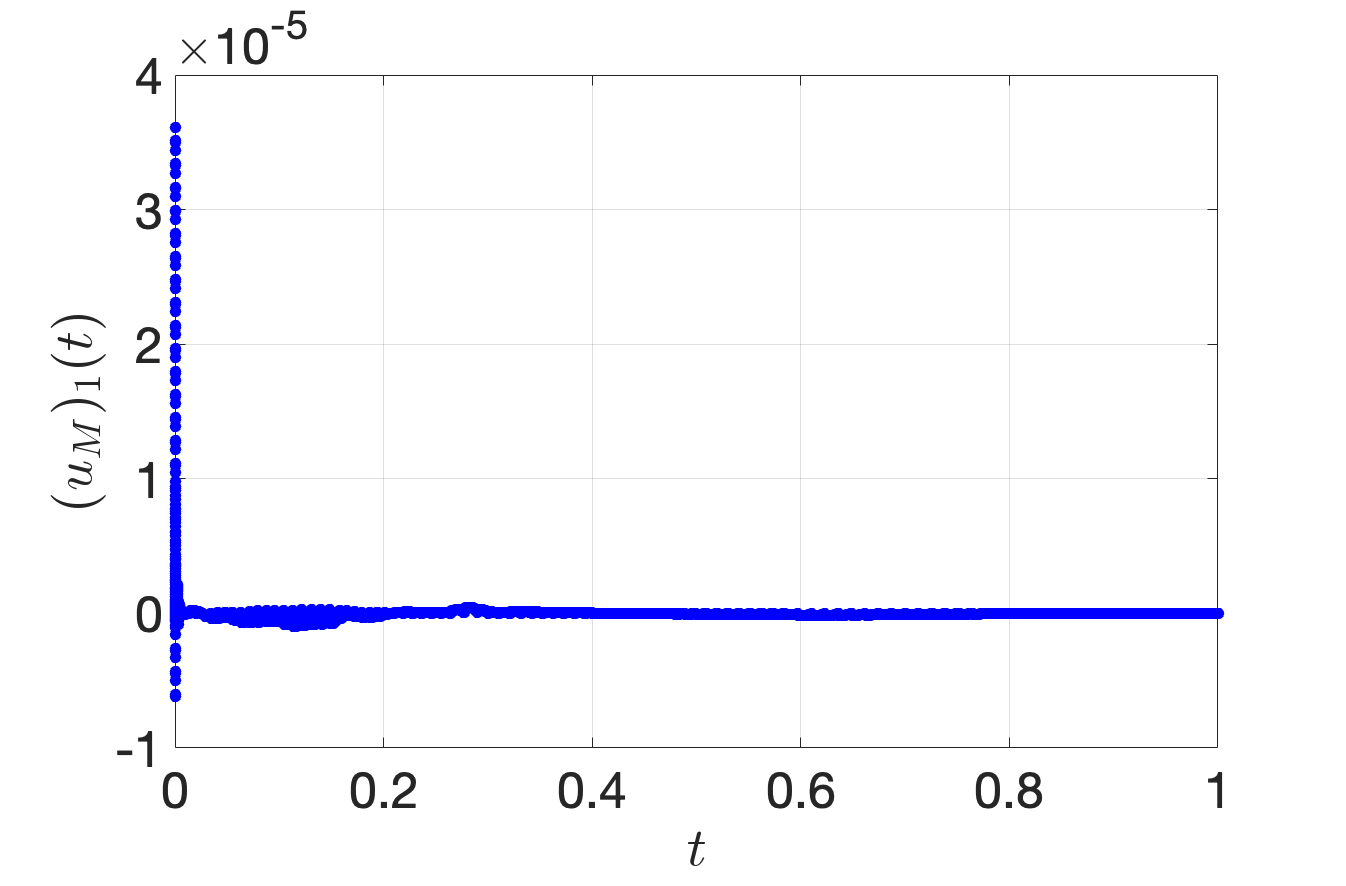} \\[3mm]
(a)
\end{center}
\end{minipage}
\begin{minipage}{80mm}
\begin{center}
\includegraphics[width=80mm]{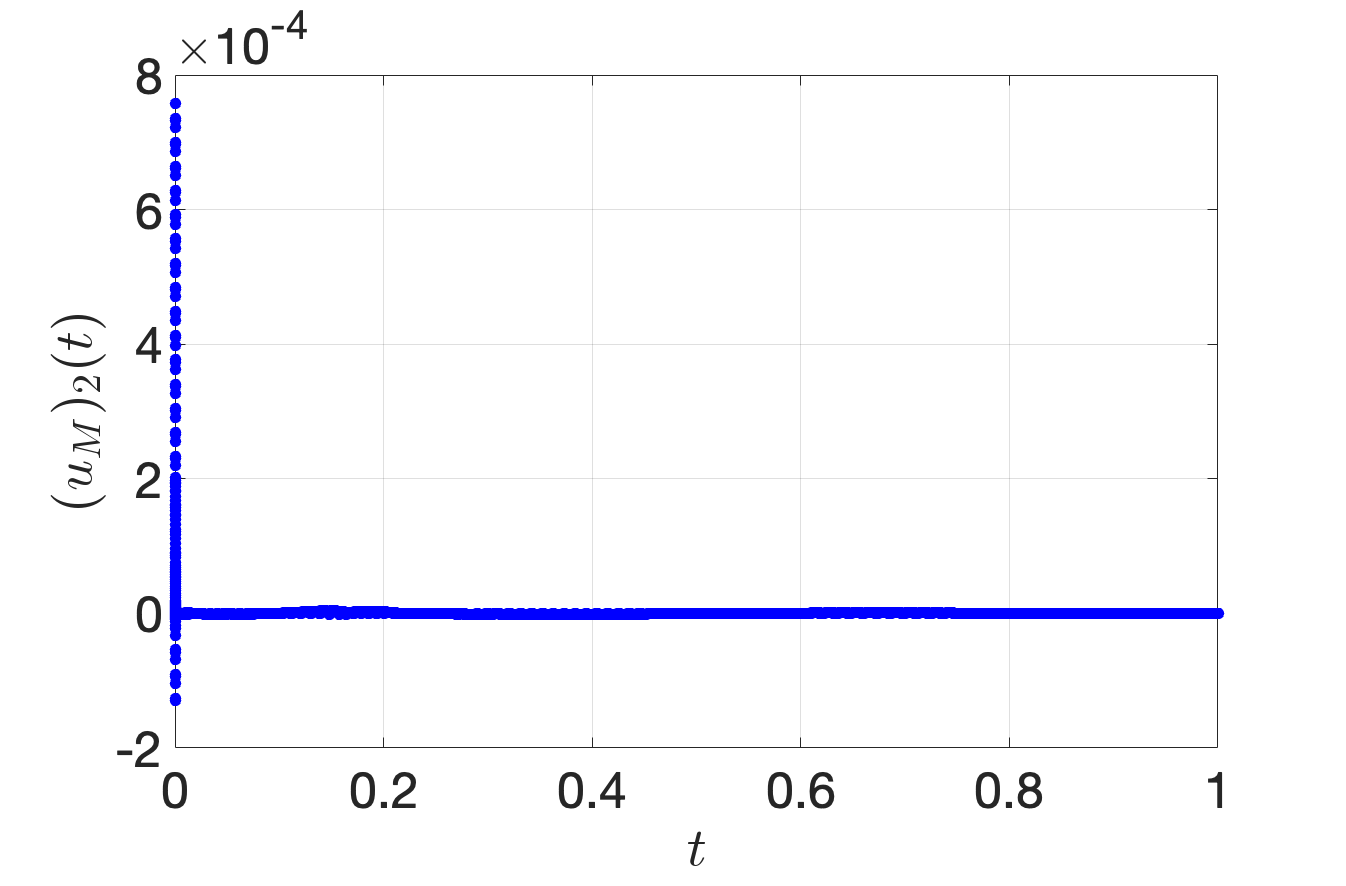} \\[3mm]
(b)
\end{center}
\end{minipage}
\\[10mm]
\begin{minipage}{80mm}
\begin{center}
\includegraphics[width=80mm]{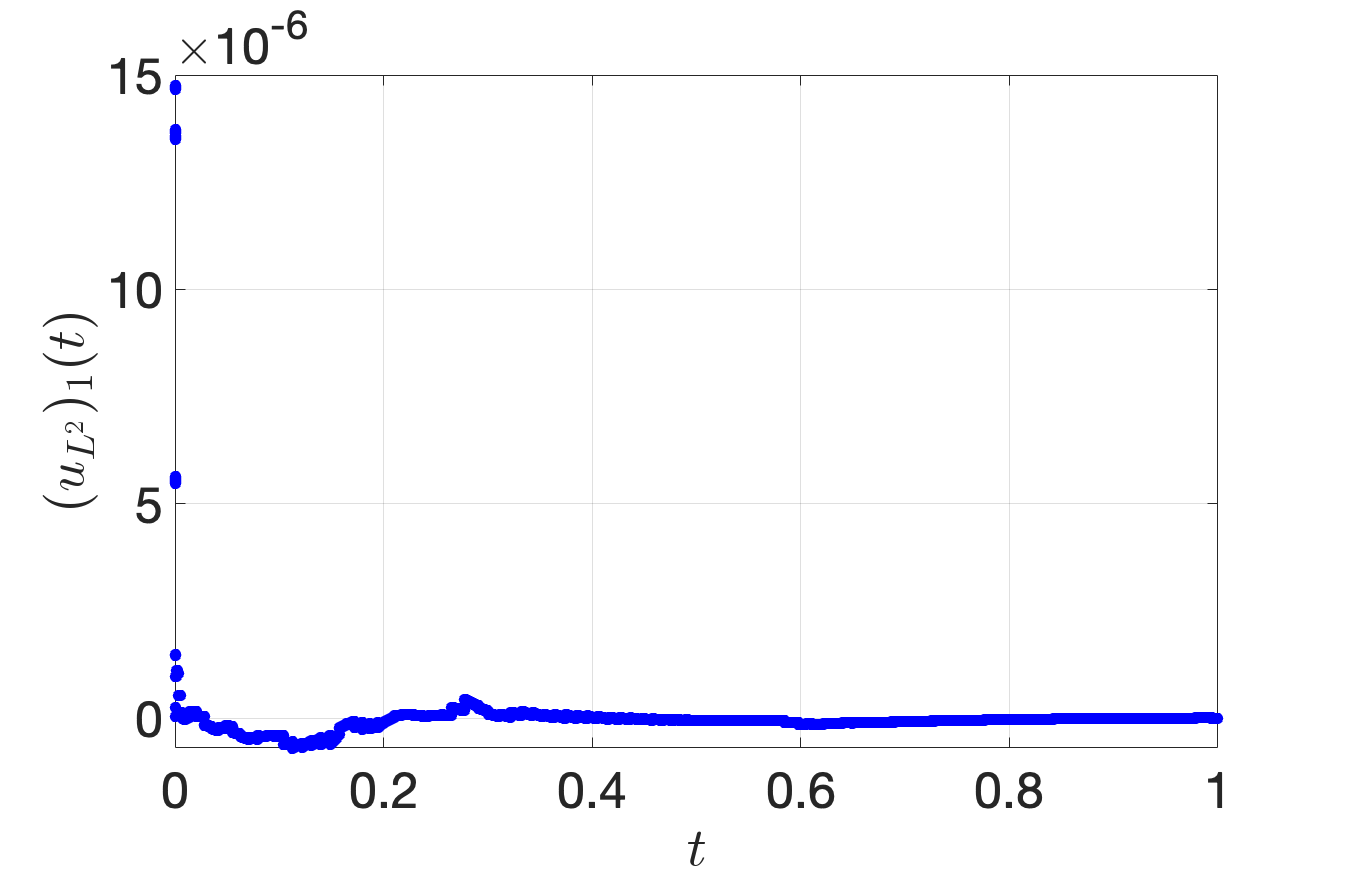} \\[3mm]
(c)
\end{center}
\end{minipage}
\begin{minipage}{80mm}
\begin{center}
\includegraphics[width=80mm]{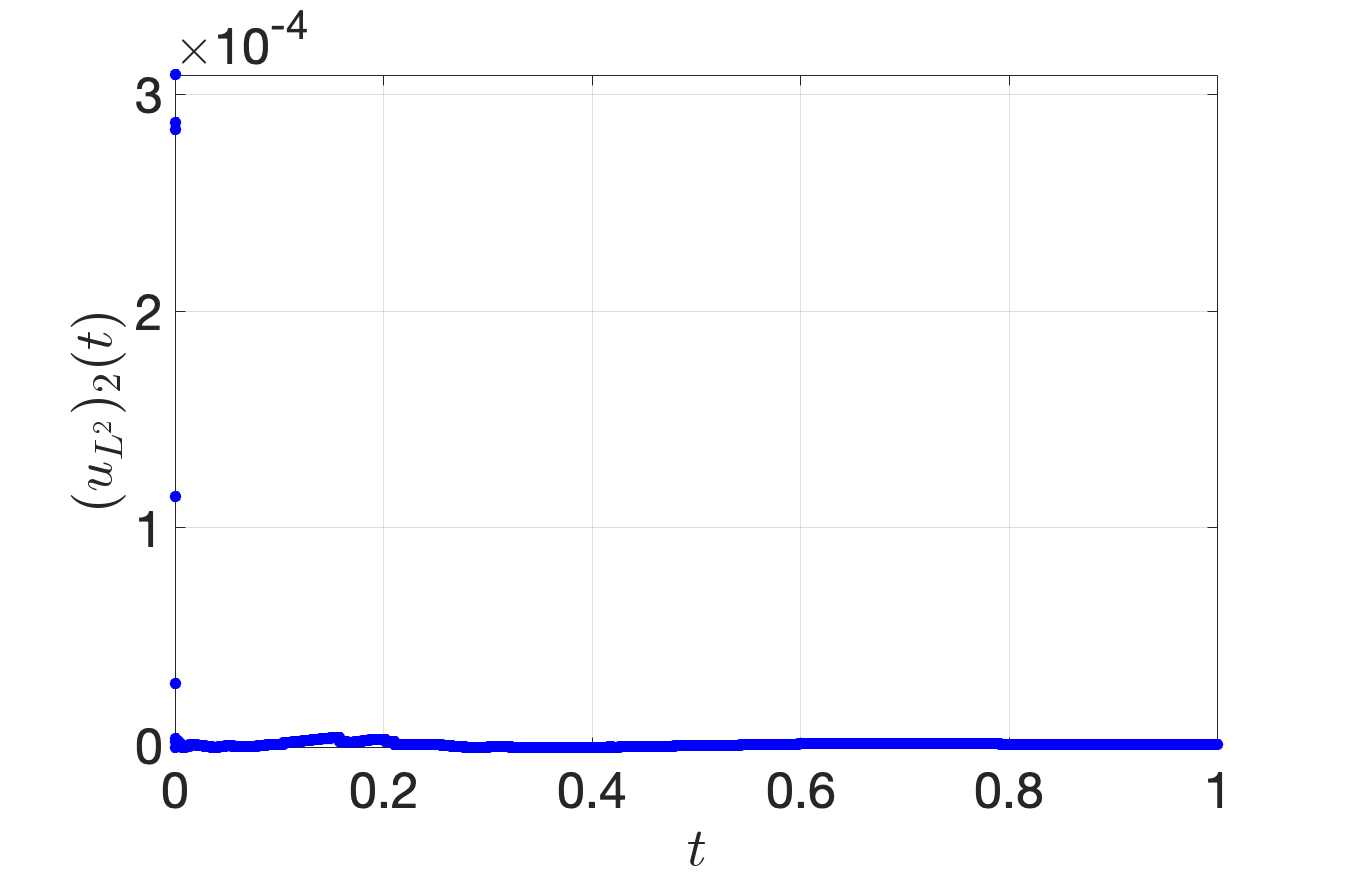} \\[3mm]
(d)
\end{center}
\end{minipage}
\\[10mm]
\begin{minipage}{80mm}
\begin{center}
\includegraphics[width=80mm]{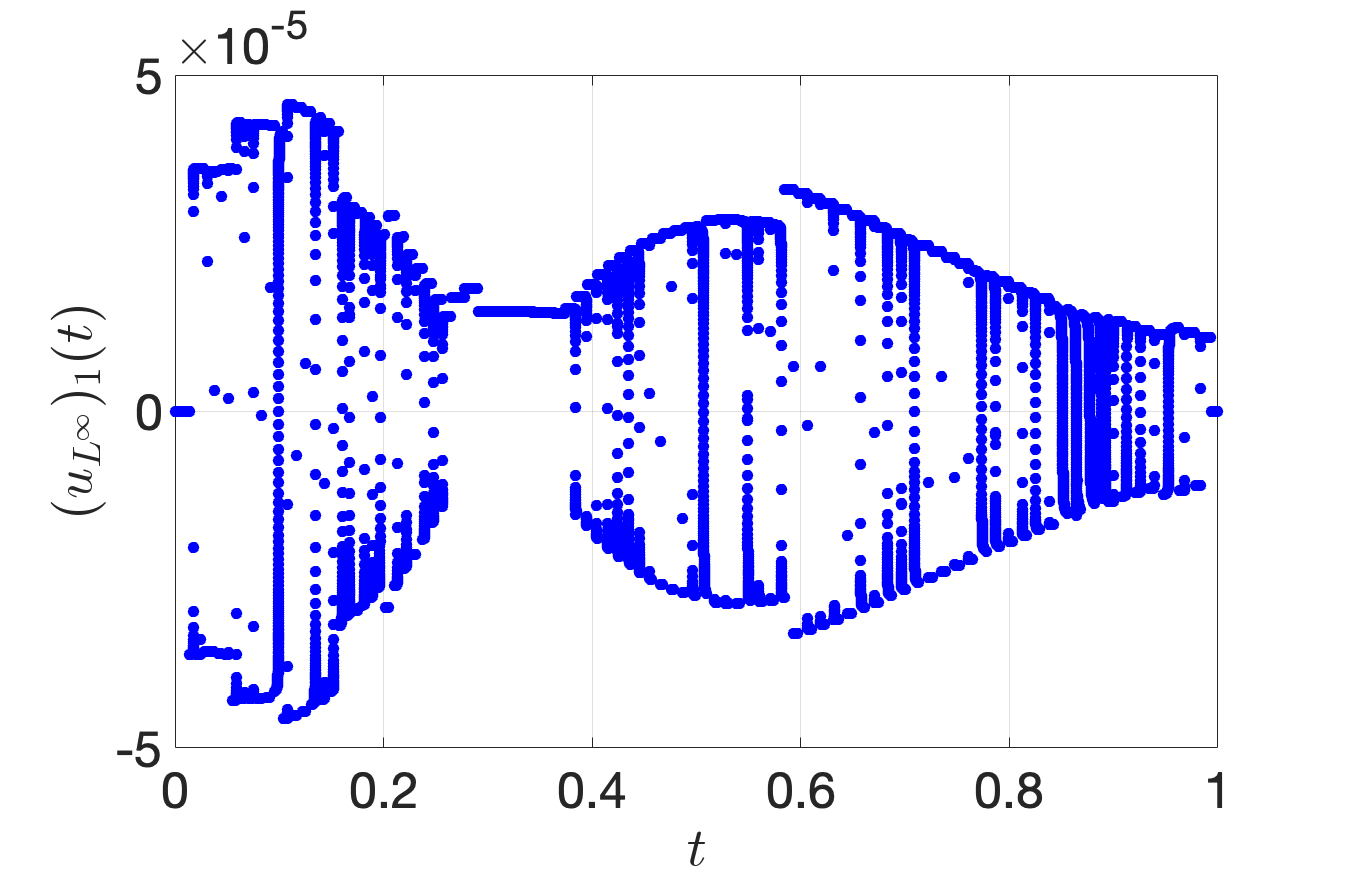} \\[3mm]
(e)
\end{center}
\end{minipage}
\begin{minipage}{80mm}
\begin{center}
\includegraphics[width=80mm]{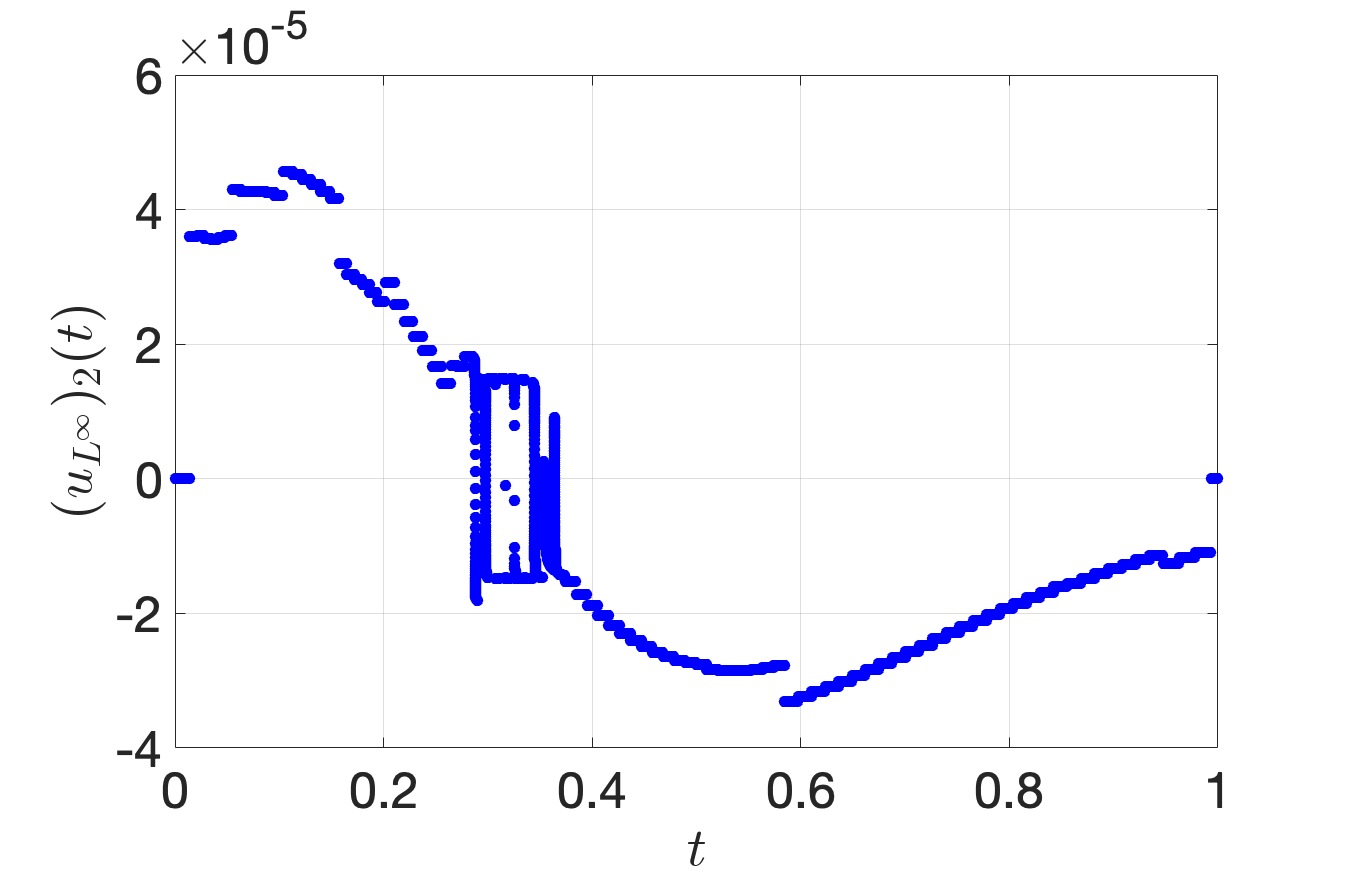} \\[3mm]
(f)
\end{center}
\end{minipage}
\
\caption{\sf Example 3 -- interpolant residual vector components
computed for equation~\eqref{ode:vdp} and {\sc Matlab}'s {\tt ode15s}, with $0\le t\le1$ and $x(0)=(-1,-3)$, using (a)--(b)~{\sc Matlab}'s {\tt deval}, (c)--(d) $L^2$-minimization and (e)--(f)~stage $L^\infty$-minimization.}
\label{fig:ex3a}
\end{figure}

\afterpage{\clearpage}
\begin{figure}[t!]
\begin{minipage}{80mm}
\begin{center}
\includegraphics[width=80mm]{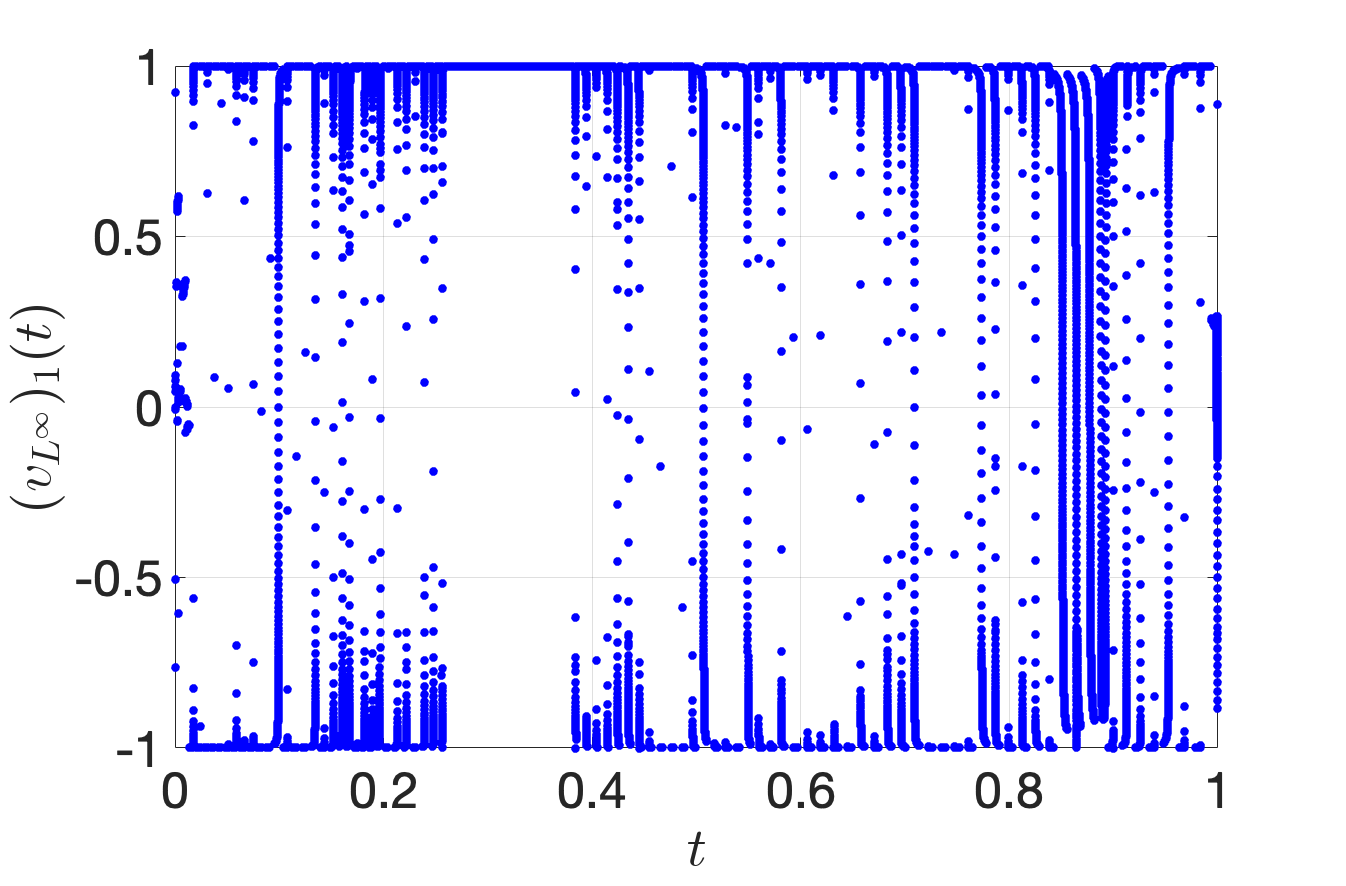} \\[3mm]
(a)
\end{center}
\end{minipage}
\begin{minipage}{80mm}
\begin{center}
\includegraphics[width=80mm]{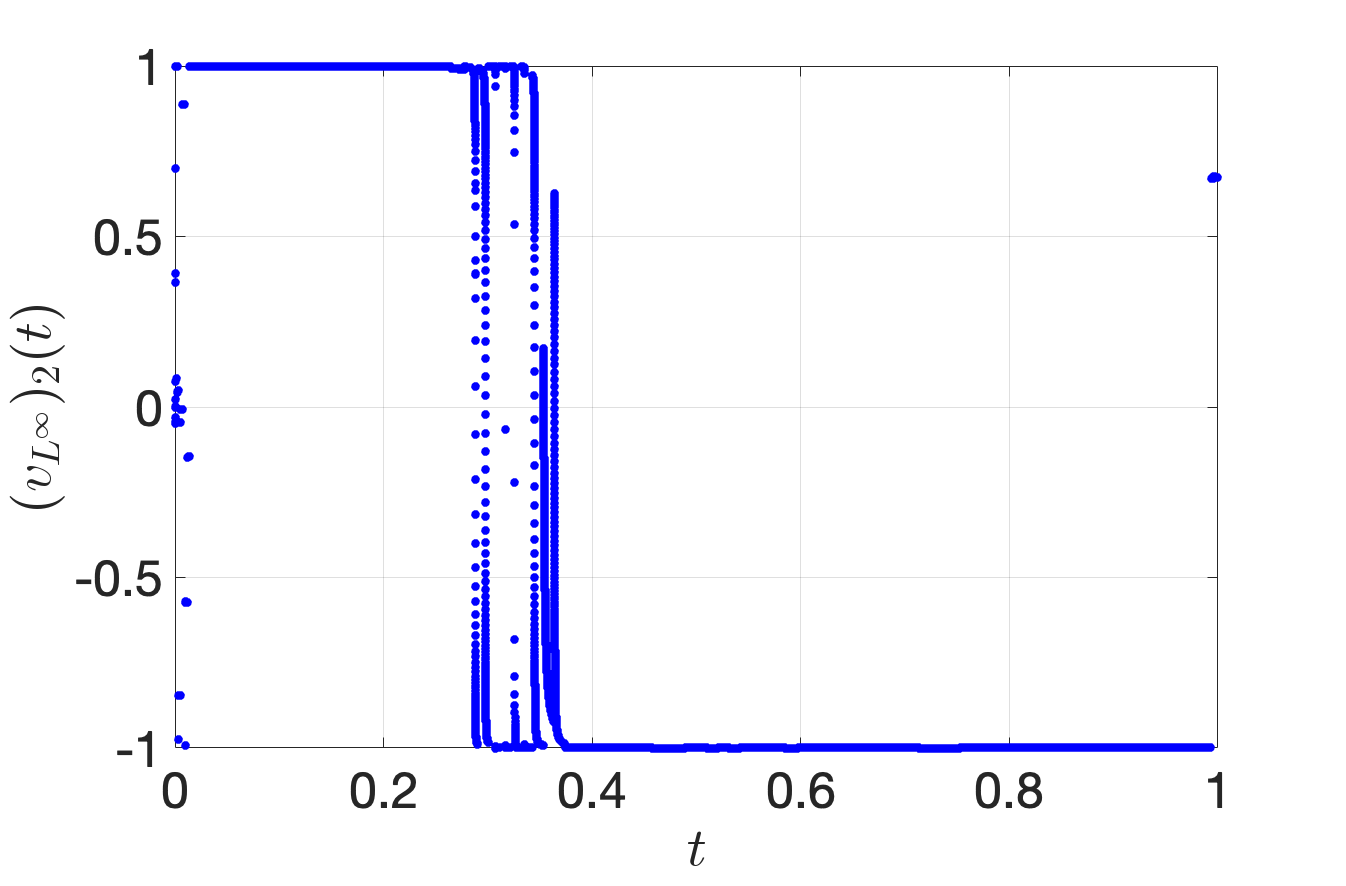} \\[3mm]
(b)
\end{center}
\end{minipage}
\\[10mm]
\begin{minipage}{80mm}
\begin{center}
\includegraphics[width=80mm]{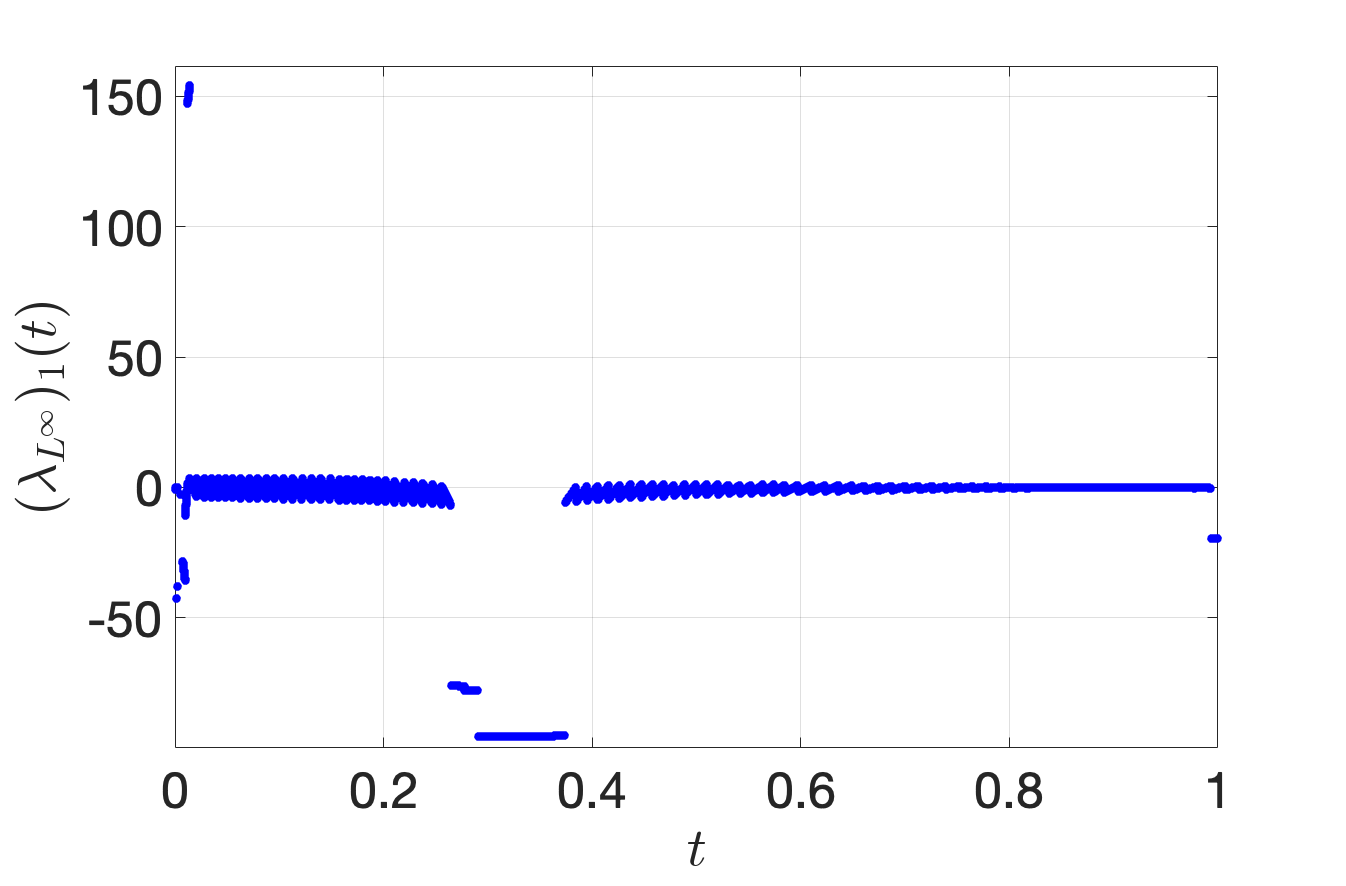} \\[3mm]
(c)
\end{center}
\end{minipage}
\begin{minipage}{80mm}
\begin{center}
\includegraphics[width=80mm]{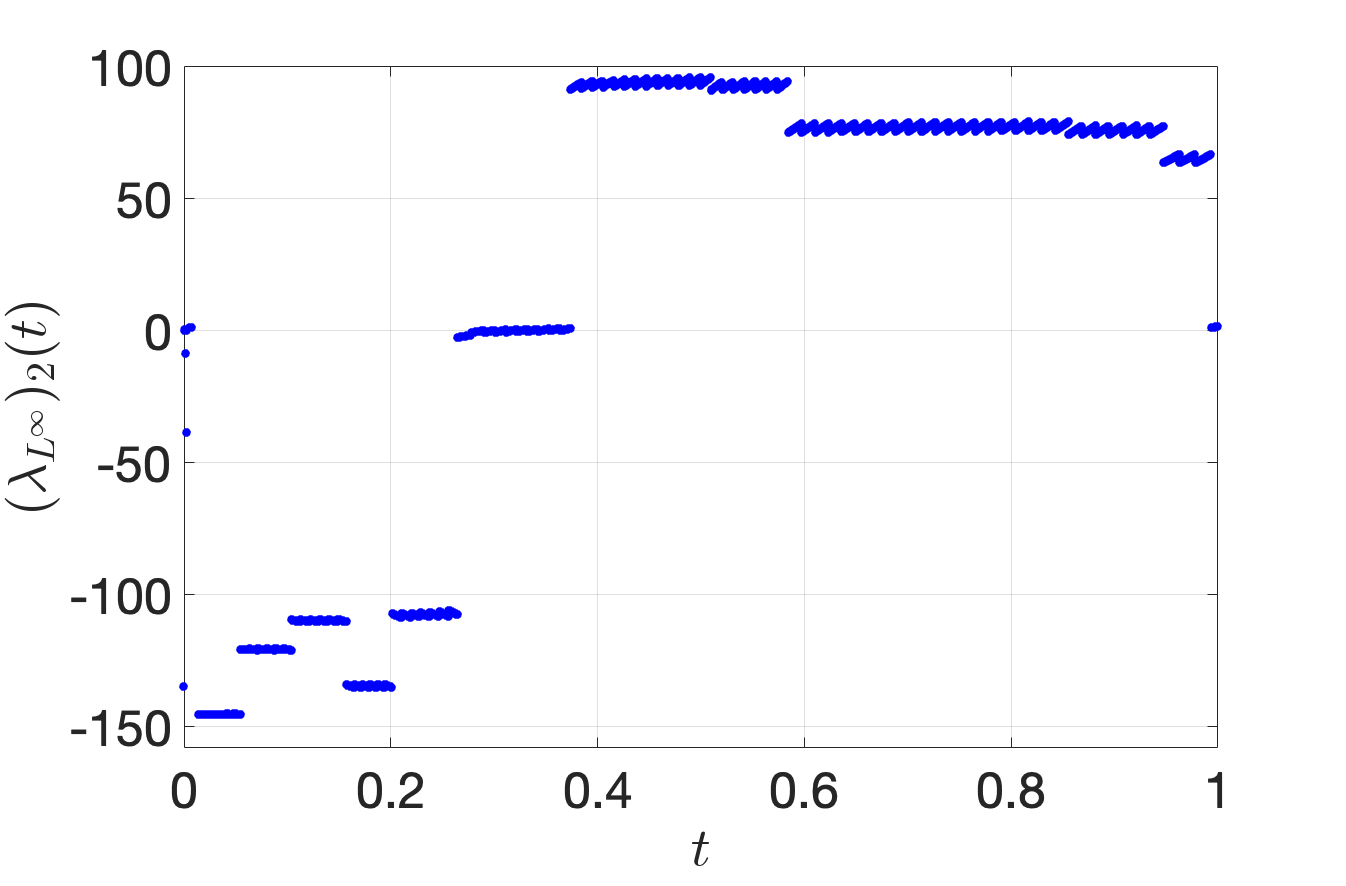} \\[3mm]
(d)
\end{center}
\end{minipage}
\
\caption{\sf Example 3 -- (a)--(b) control variable vector and (c)--(d) costate variable vector components, all for equation~\eqref{ode:vdp}, with $0\le t\le1$, $x(0)=(-1,-3)$, and {\sc Matlab}'s {\tt ode15s}.}
\label{fig:ex3b}
\end{figure}

\afterpage{\clearpage}
\begin{figure}
\begin{minipage}{80mm}
\begin{center}
\includegraphics[width=80mm]{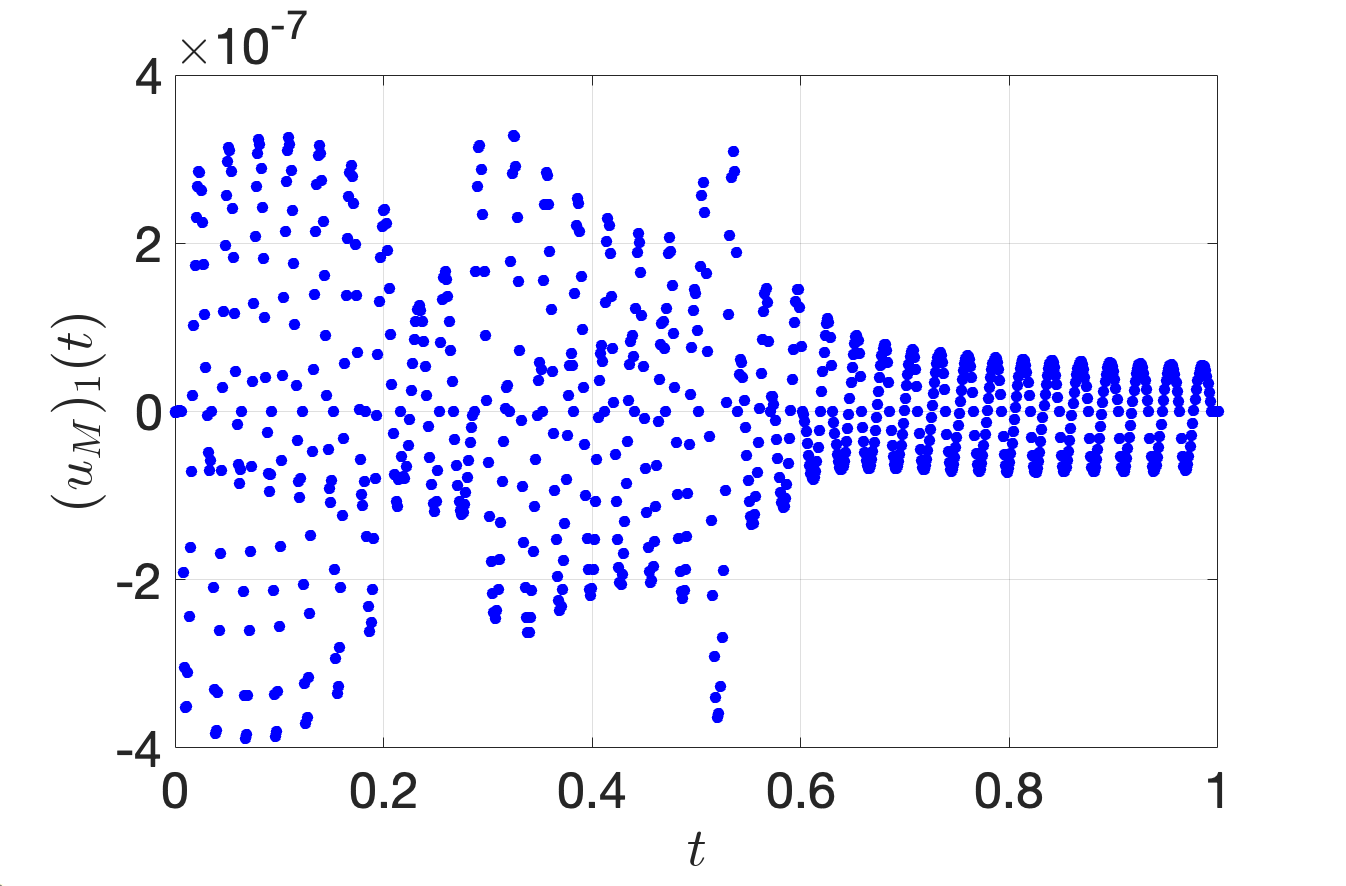} \\[3mm]
(a)
\end{center}
\end{minipage}
\begin{minipage}{80mm}
\begin{center}
\includegraphics[width=80mm]{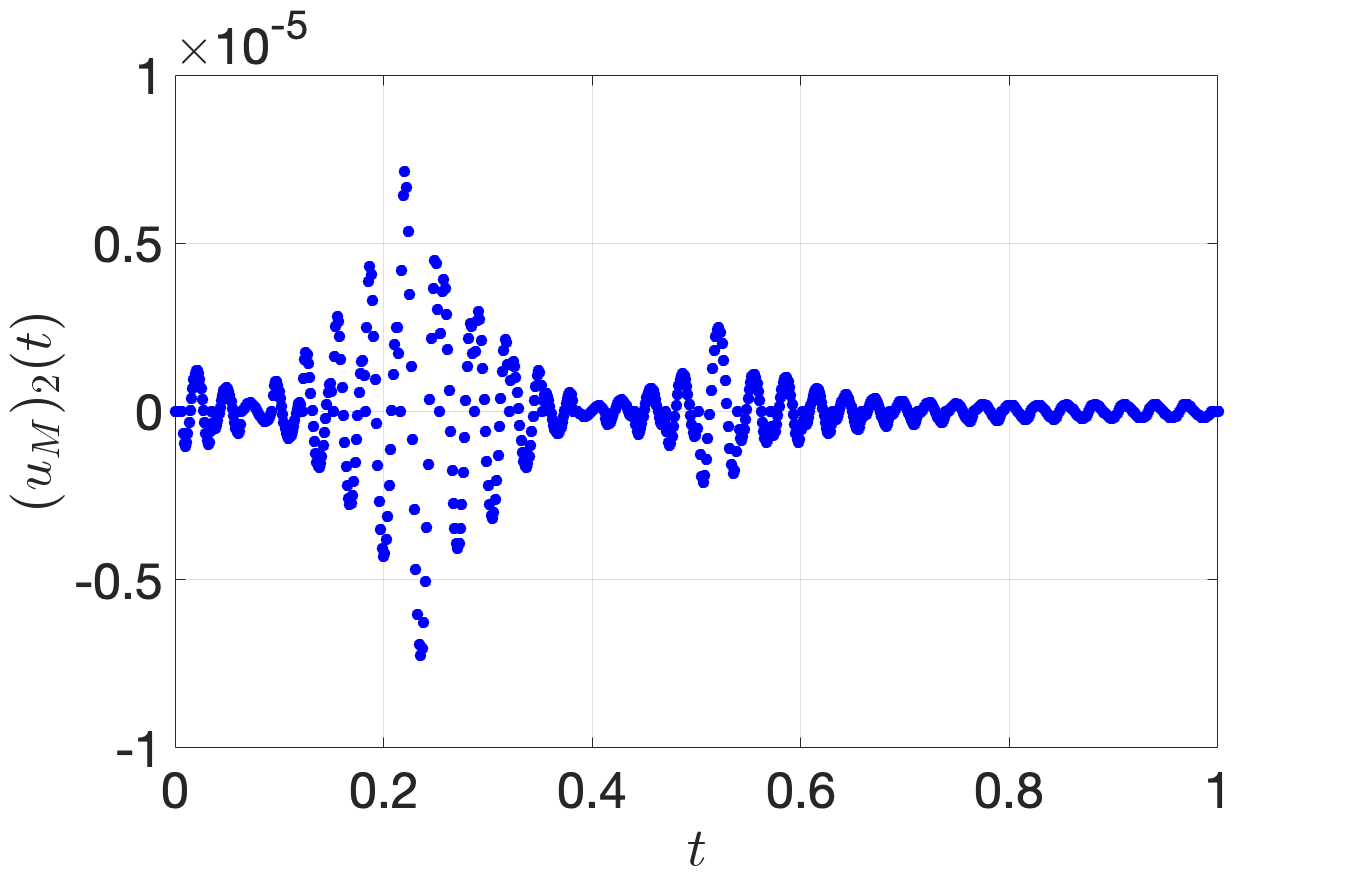} \\[3mm]
(b)
\end{center}
\end{minipage}
\\[10mm]
\begin{minipage}{80mm}
\begin{center}
\includegraphics[width=80mm]{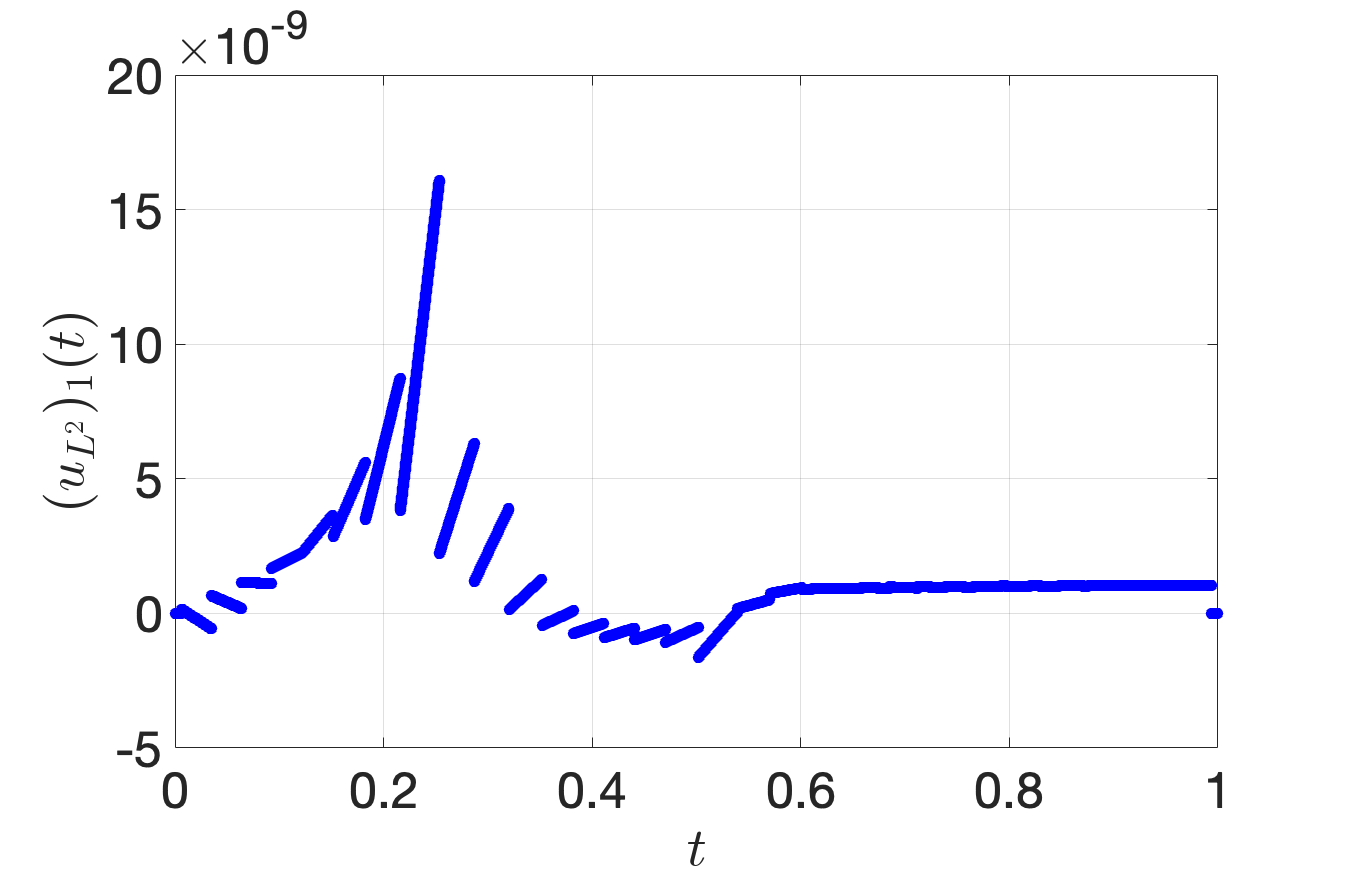} \\[3mm]
(c)
\end{center}
\end{minipage}
\begin{minipage}{80mm}
\begin{center}
\includegraphics[width=80mm]{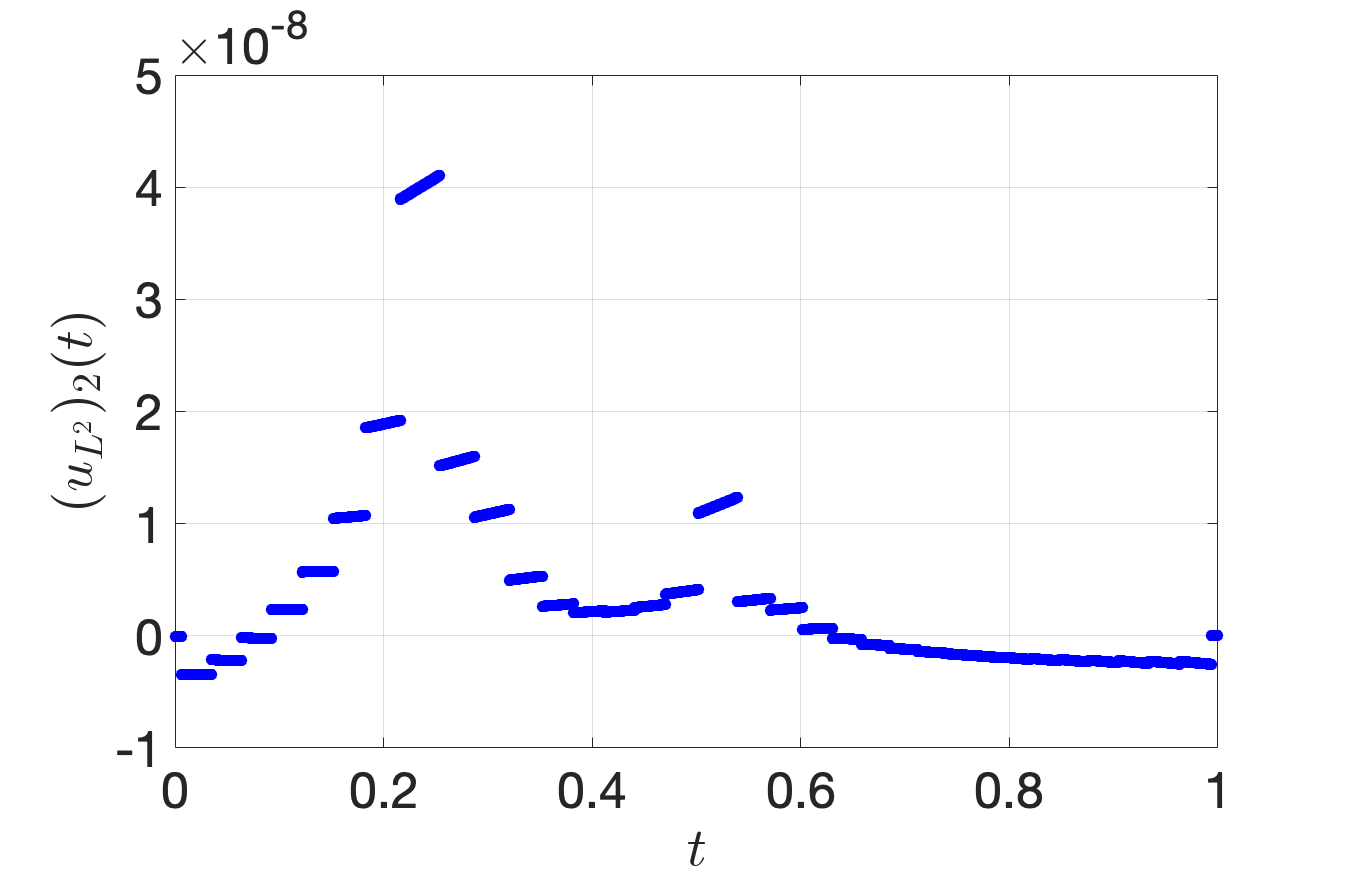} \\[3mm]
(d)
\end{center}
\end{minipage}
\\[10mm]
\begin{minipage}{80mm}
\begin{center}
\includegraphics[width=80mm]{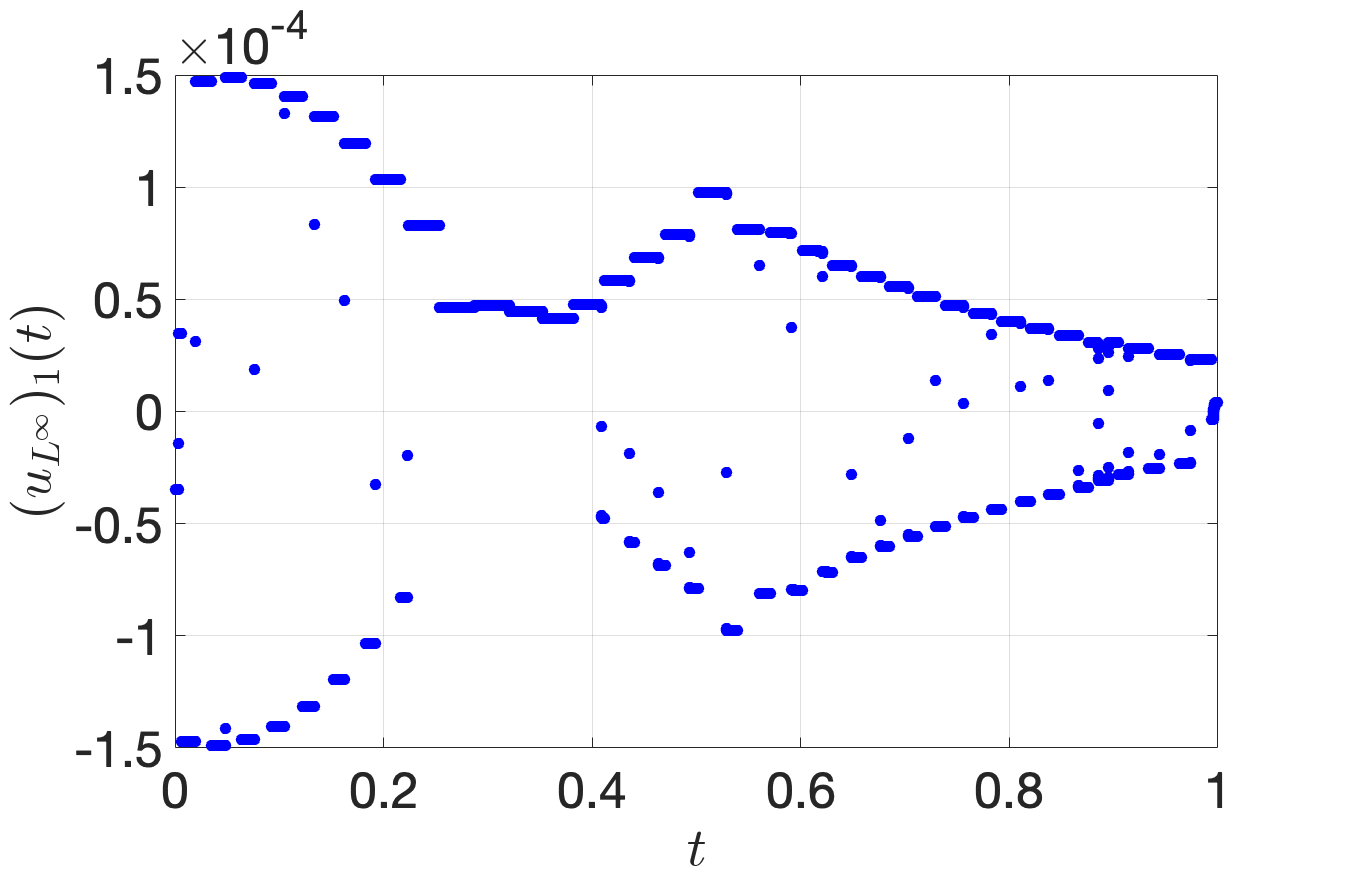} \\[3mm]
(e)
\end{center}
\end{minipage}
\begin{minipage}{80mm}
\begin{center}
\includegraphics[width=80mm]{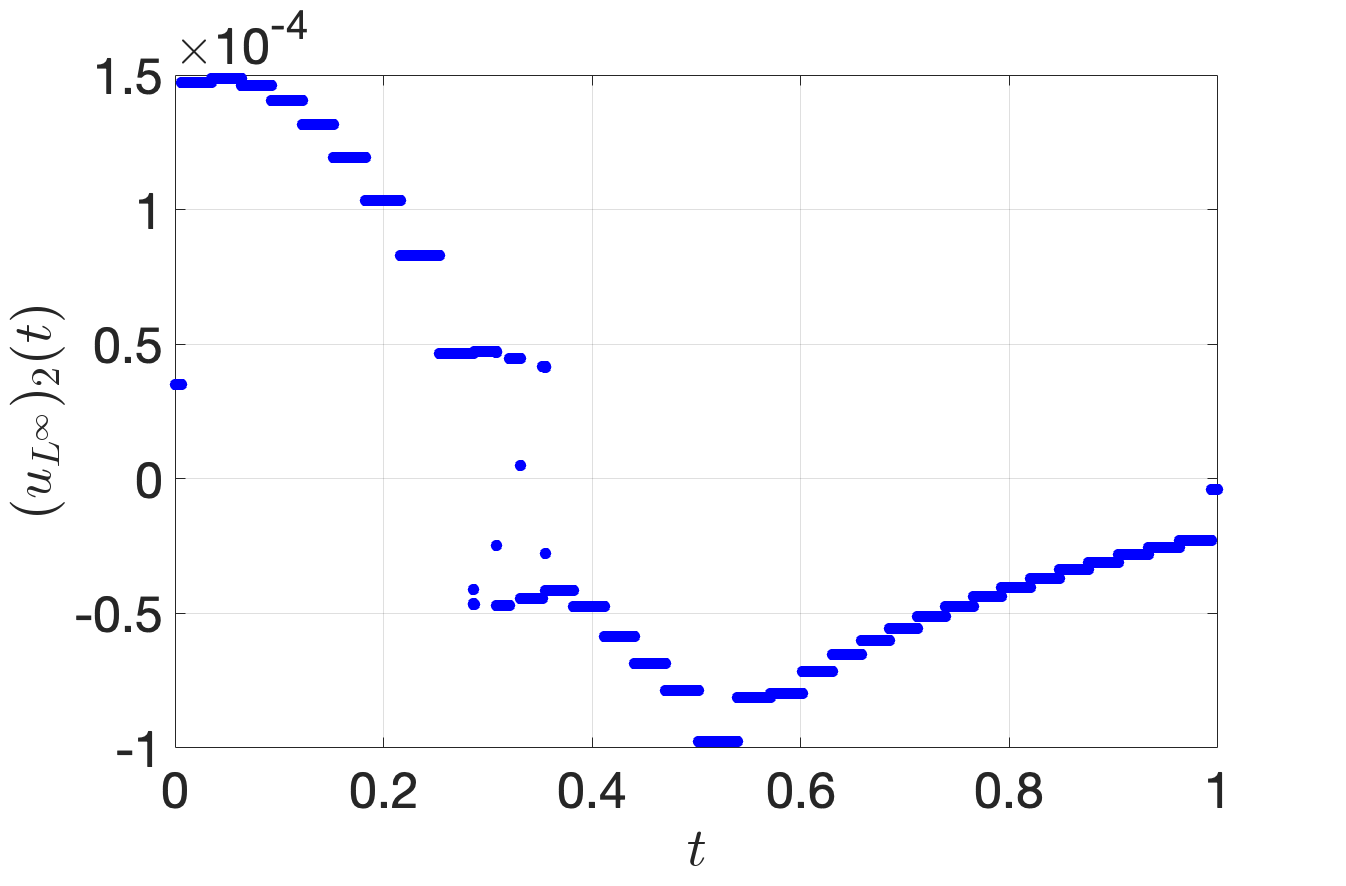} \\[3mm]
(f)
\end{center}
\end{minipage}
\
\caption{\sf Example 3 -- interpolant residual vector components
computed for equation~\eqref{ode:vdp} and {\sc Matlab}'s {\tt ode45}, with $0\le t\le1$ and $x(0)=(-1,-3)$, using (a)--(b)~{\sc Matlab}'s {\tt deval}, (c)--(d) $L^2$-minimization and (e)--(f)~stage $L^\infty$-minimization.}
\label{fig:ex3c}
\end{figure}

\afterpage{\clearpage}
\begin{figure}[t!]
\begin{minipage}{80mm}
\begin{center}
\includegraphics[width=80mm]{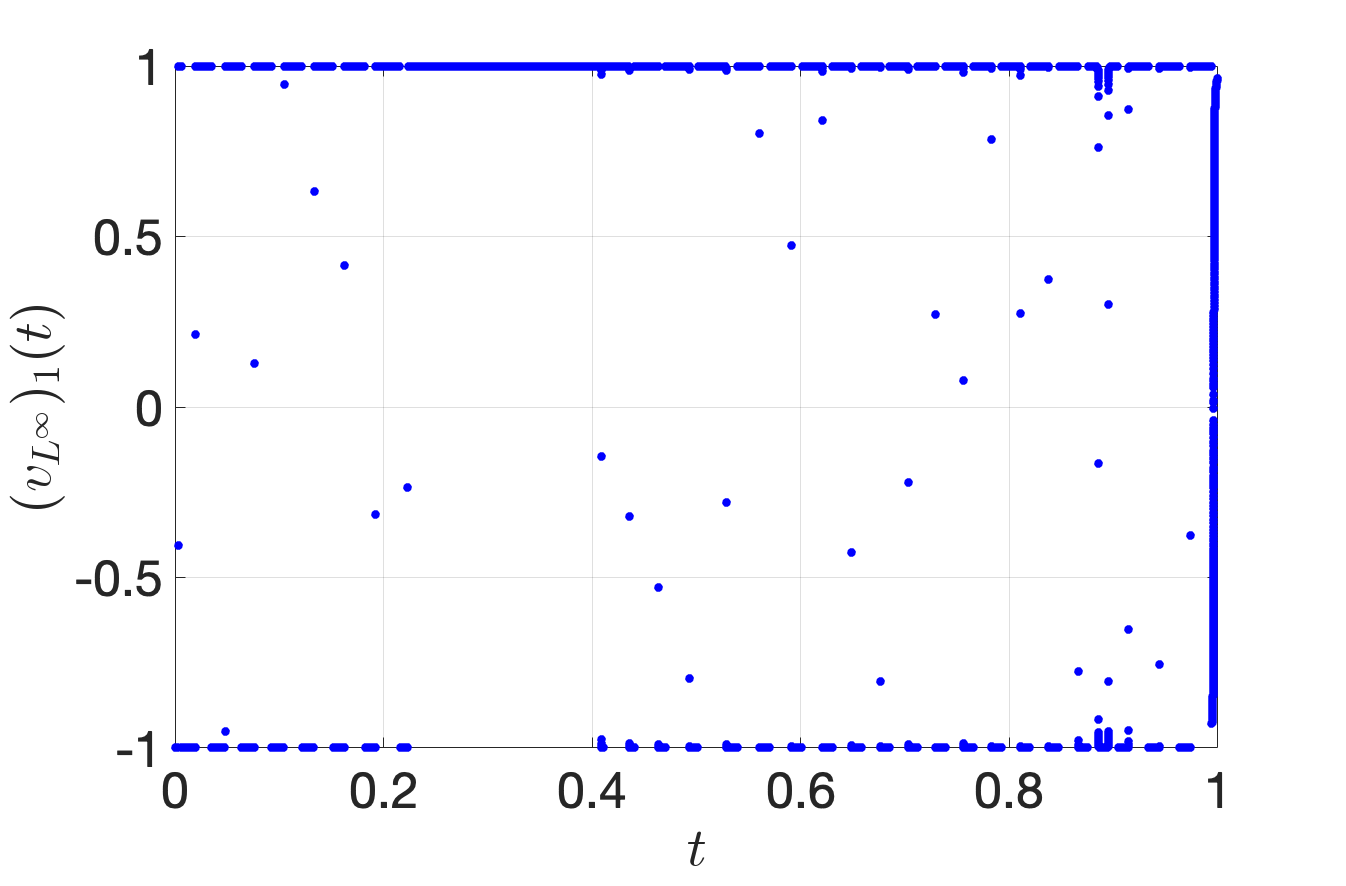} \\[3mm]
(a)
\end{center}
\end{minipage}
\begin{minipage}{80mm}
\begin{center}
\includegraphics[width=80mm]{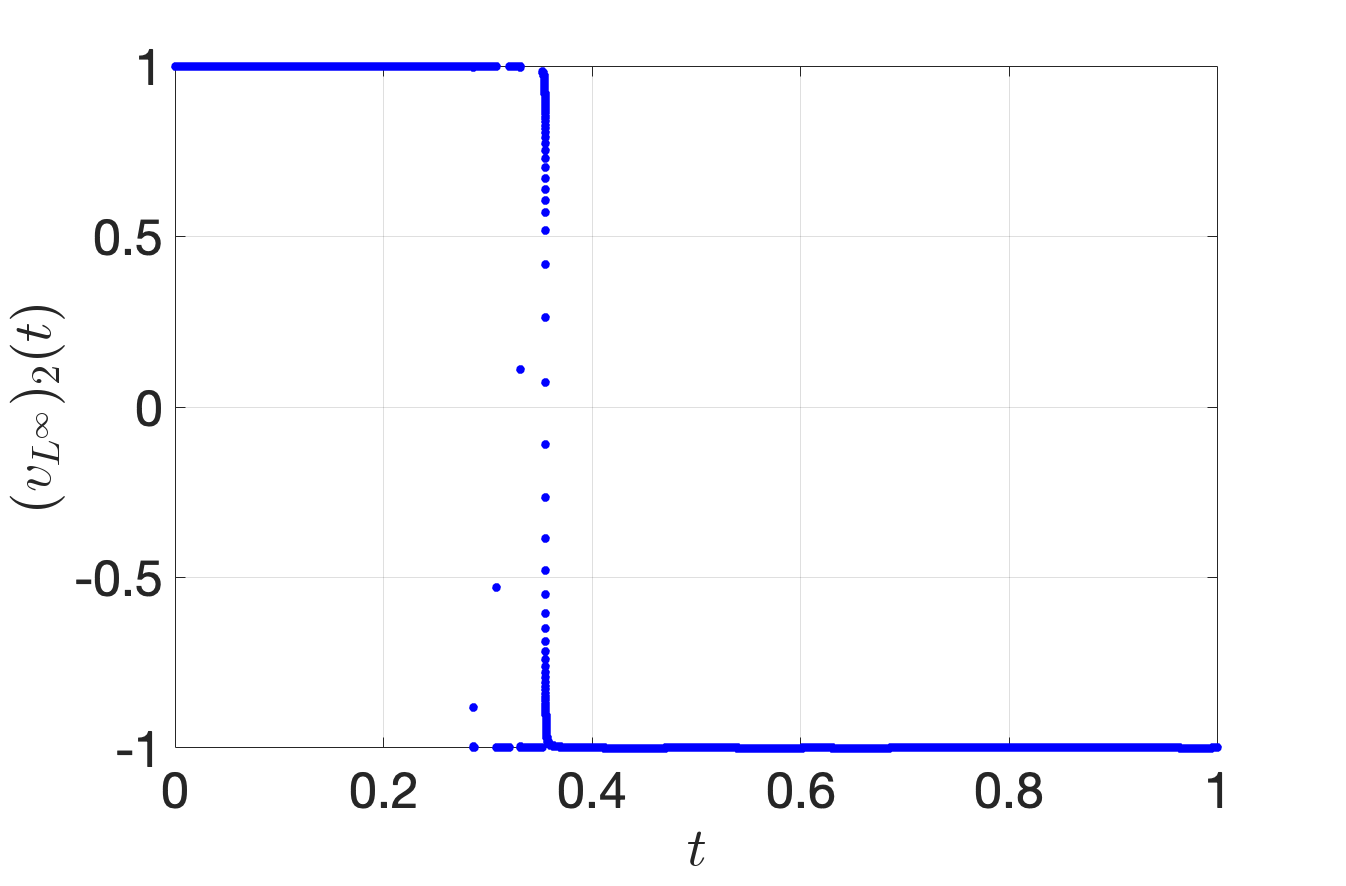} \\[3mm]
(b)
\end{center}
\end{minipage}
\\[10mm]
\begin{minipage}{80mm}
\begin{center}
\includegraphics[width=80mm]{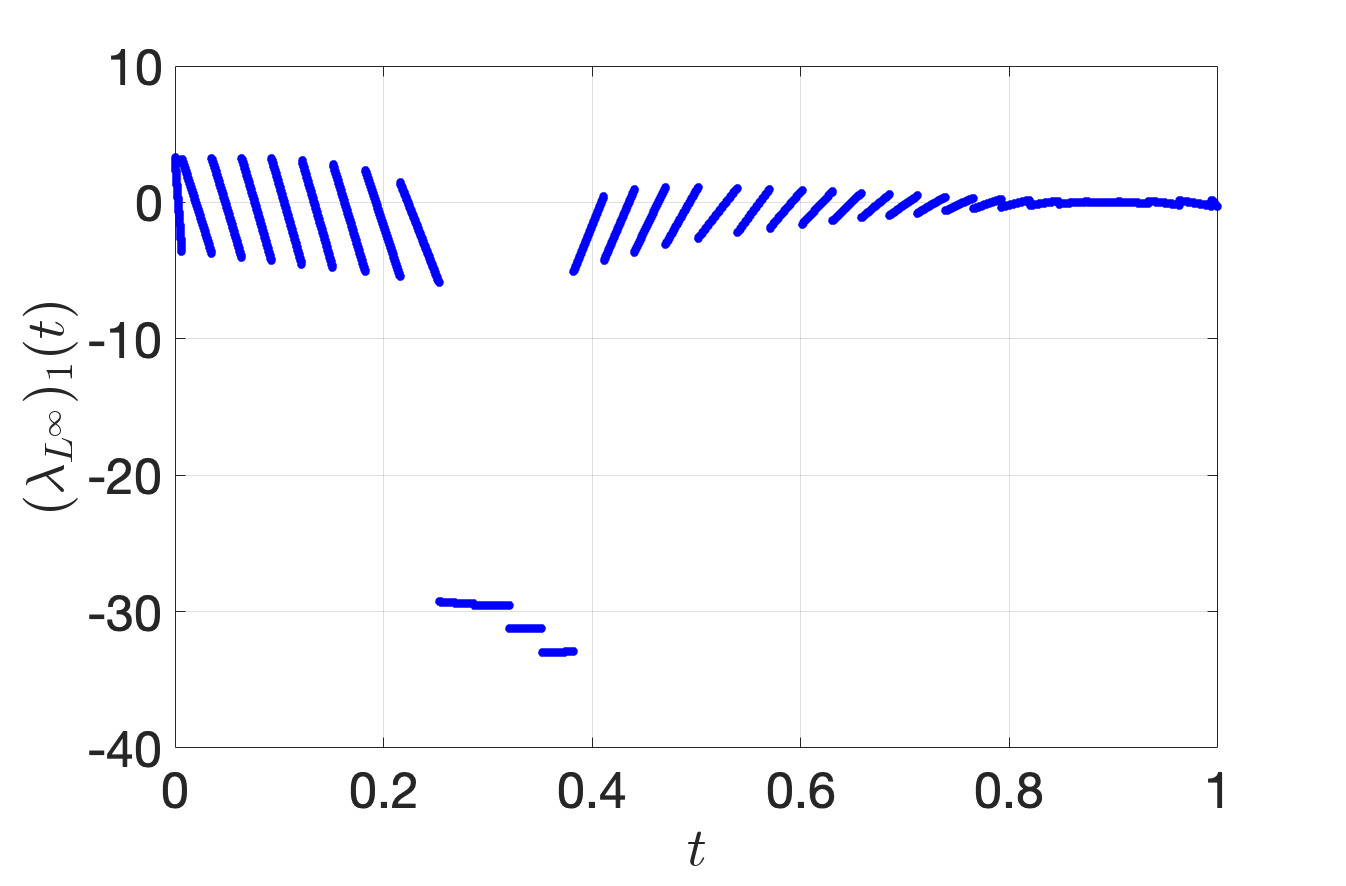} \\[3mm]
(c)
\end{center}
\end{minipage}
\begin{minipage}{80mm}
\begin{center}
\includegraphics[width=80mm]{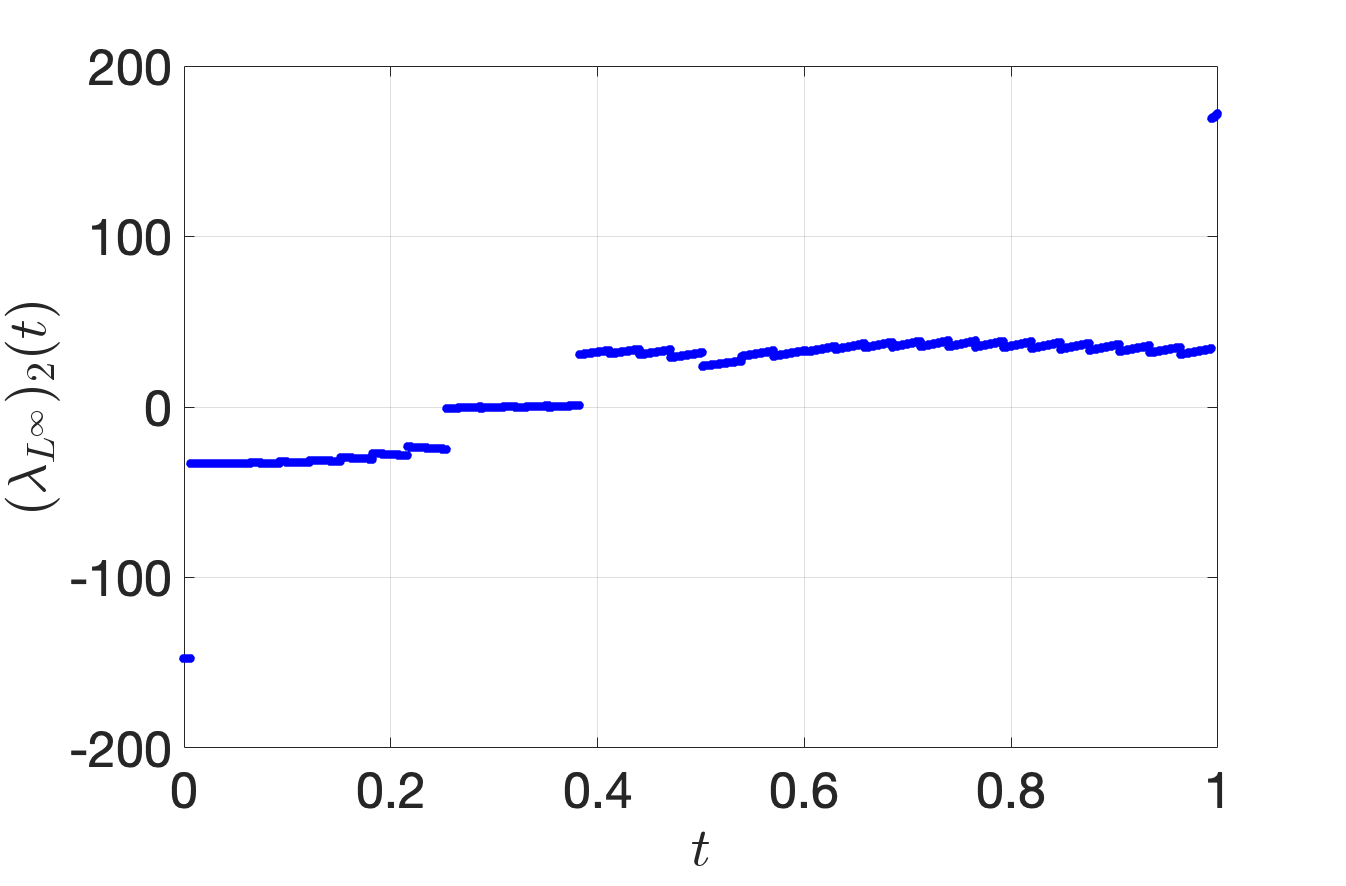} \\[3mm]
(d)
\end{center}
\end{minipage}
\
\caption{\sf Example 3 -- (a)--(b) control variable vector and (c)--(d) costate variable vector components, all for equation~\eqref{ode:vdp}, with $0\le t\le1$, $x(0)=(-1,-3)$, and {\sc Matlab}'s {\tt ode45}.}
\label{fig:ex3d}
\end{figure}

\afterpage{\clearpage}
\begin{figure}
\begin{minipage}{80mm}
\begin{center}
\includegraphics[width=80mm]{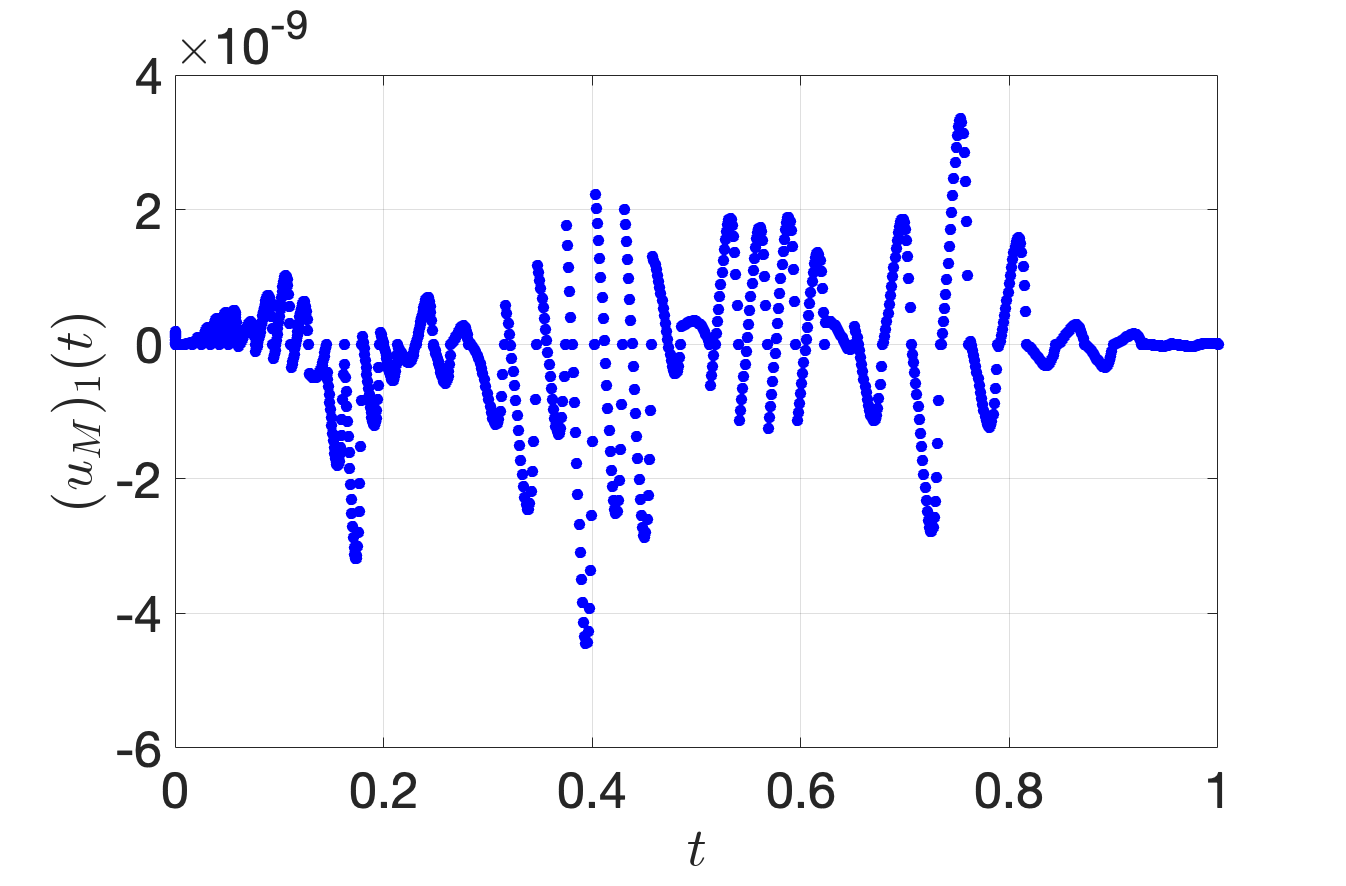} \\[3mm]
(a)
\end{center}
\end{minipage}
\begin{minipage}{80mm}
\begin{center}
\includegraphics[width=80mm]{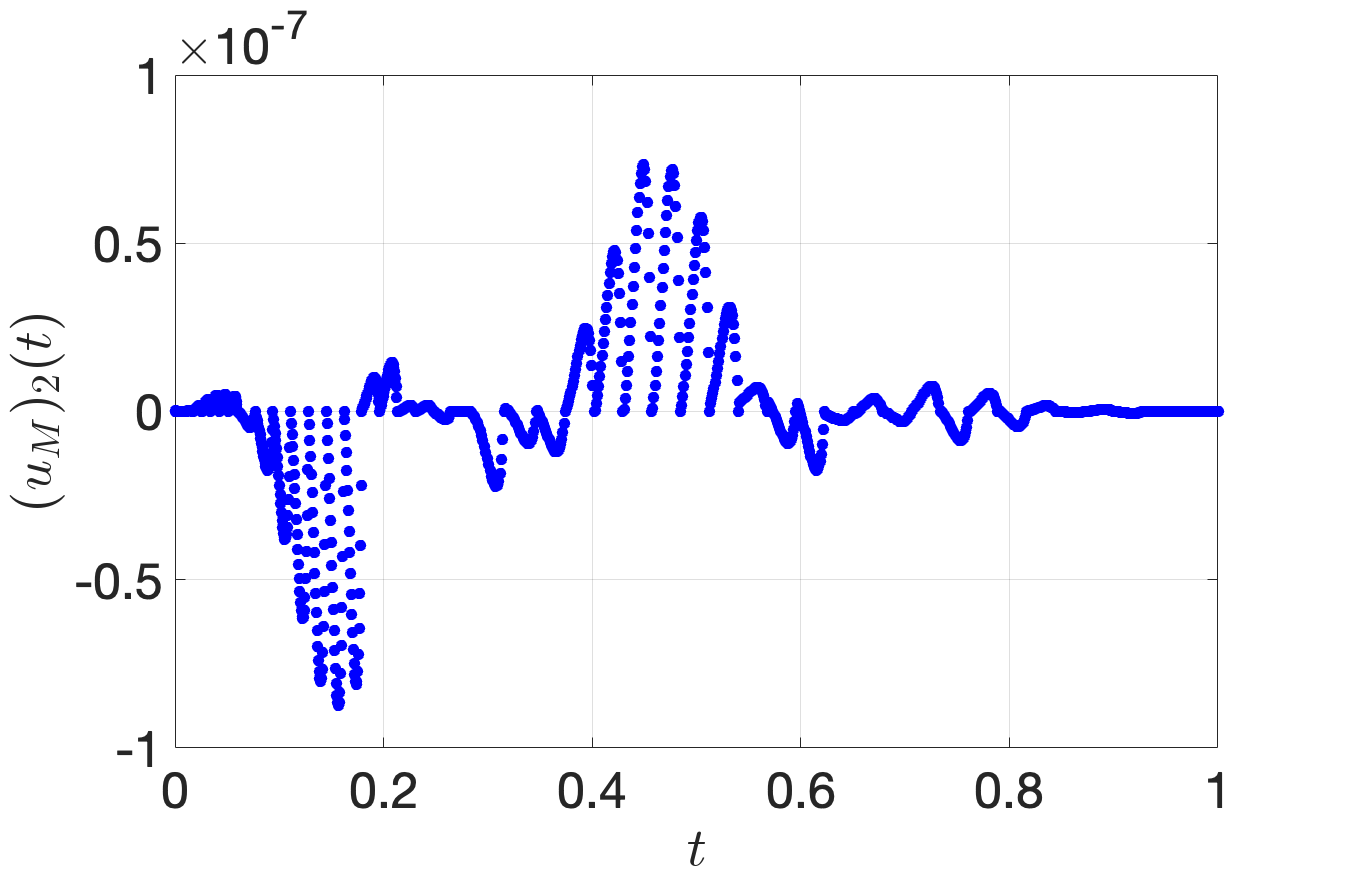} \\[3mm]
(b)
\end{center}
\end{minipage}
\\[10mm]
\begin{minipage}{80mm}
\begin{center}
\includegraphics[width=80mm]{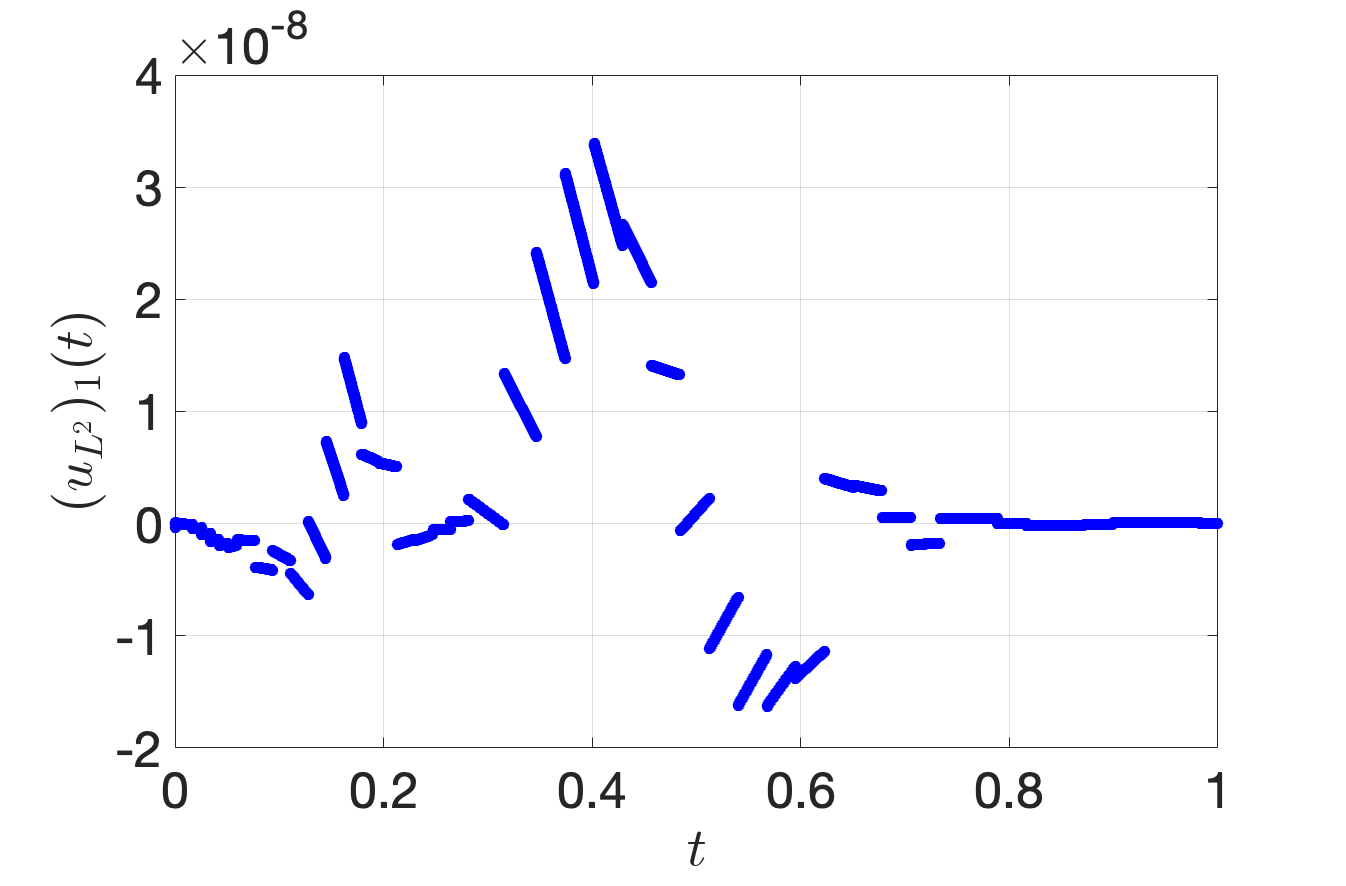} \\[3mm]
(c)
\end{center}
\end{minipage}
\begin{minipage}{80mm}
\begin{center}
\includegraphics[width=80mm]{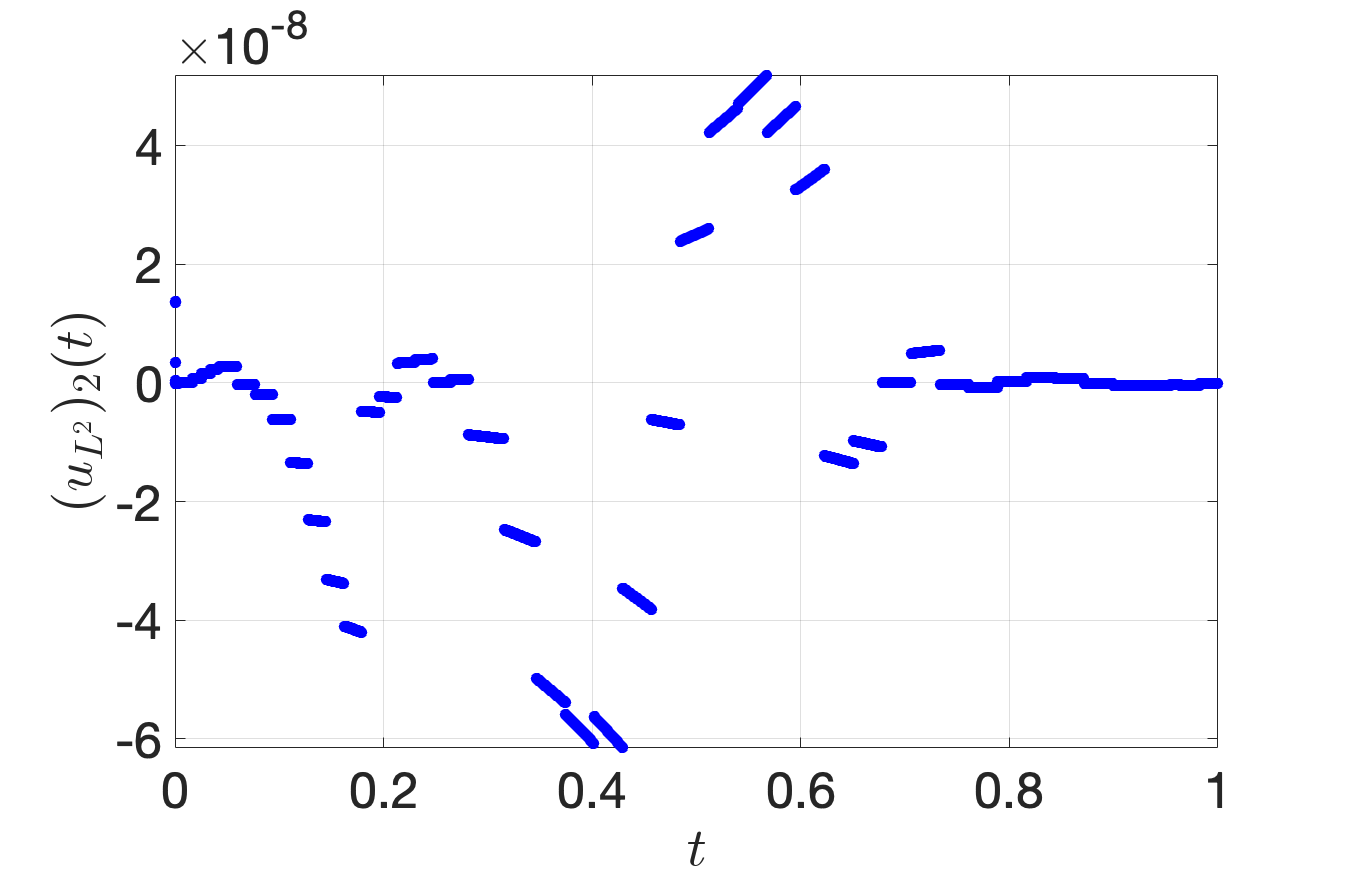} \\[3mm]
(d)
\end{center}
\end{minipage}
\\[10mm]
\begin{minipage}{80mm}
\begin{center}
\includegraphics[width=80mm]{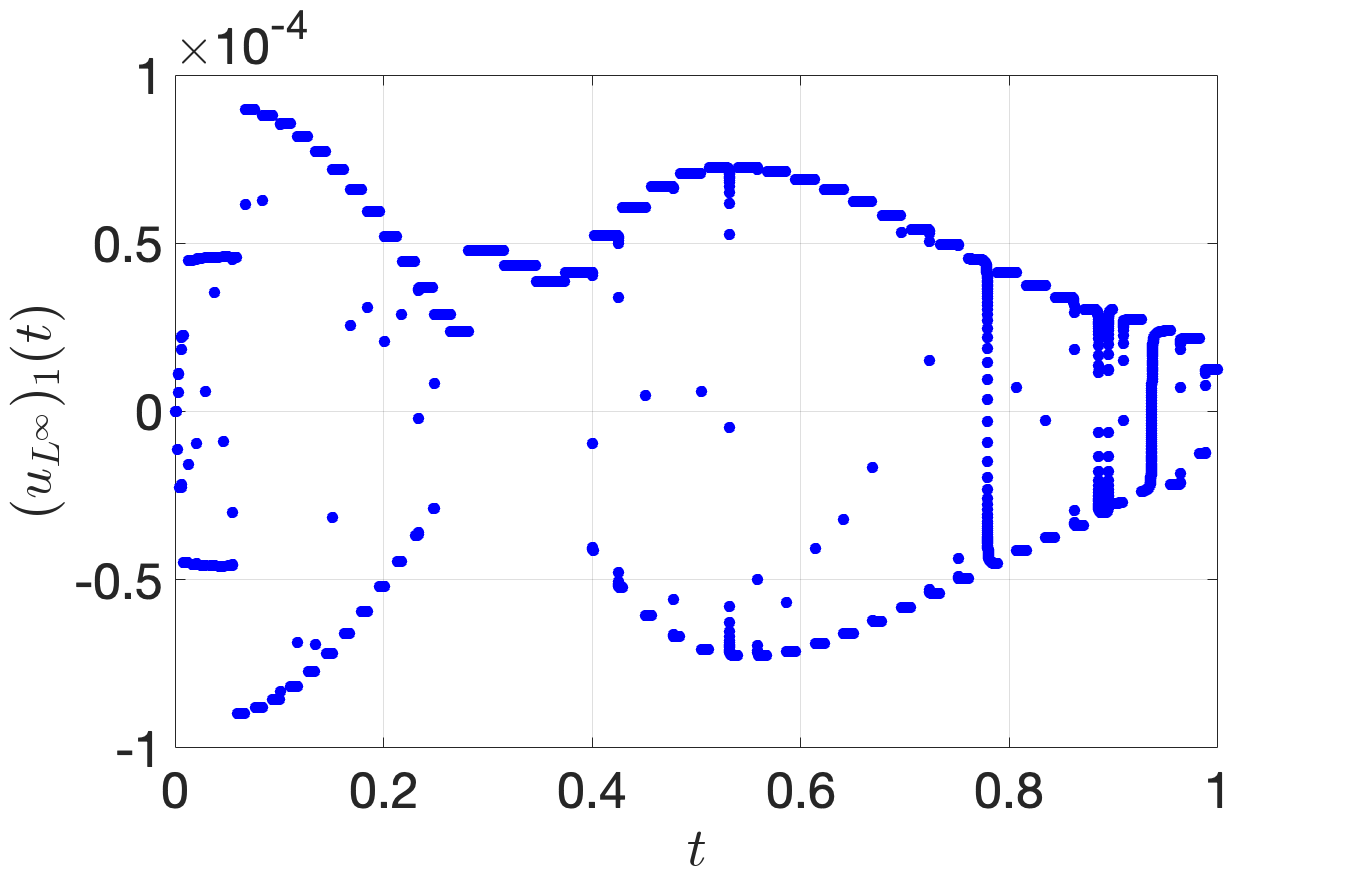} \\[3mm]
(e)
\end{center}
\end{minipage}
\begin{minipage}{80mm}
\begin{center}
\includegraphics[width=80mm]{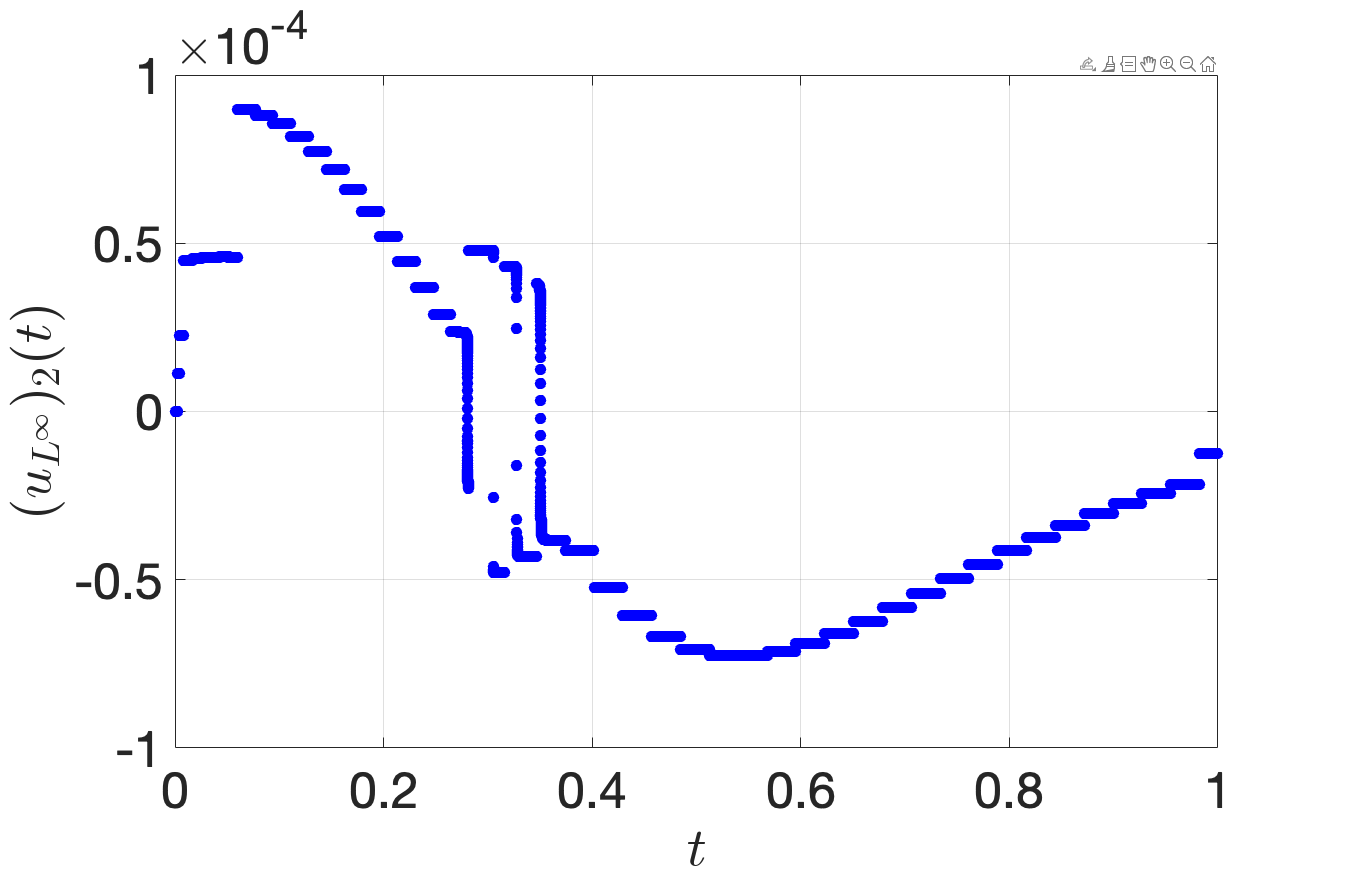} \\[3mm]
(f)
\end{center}
\end{minipage}
\
\caption{\sf Example 3 -- interpolant residual vector components
computed for equation~\eqref{ode:vdp} and {\sc Matlab}'s {\tt ode113}, with $0\le t\le1$ and $x(0)=(-1,-3)$, using (a)--(b)~{\sc Matlab}'s {\tt deval}, (c)--(d) $L^2$-minimization and (e)--(f)~stage $L^\infty$-minimization.}
\label{fig:ex3e}
\end{figure}

\afterpage{\clearpage}
\begin{figure}[t!]
\begin{minipage}{80mm}
\begin{center}
\includegraphics[width=80mm]{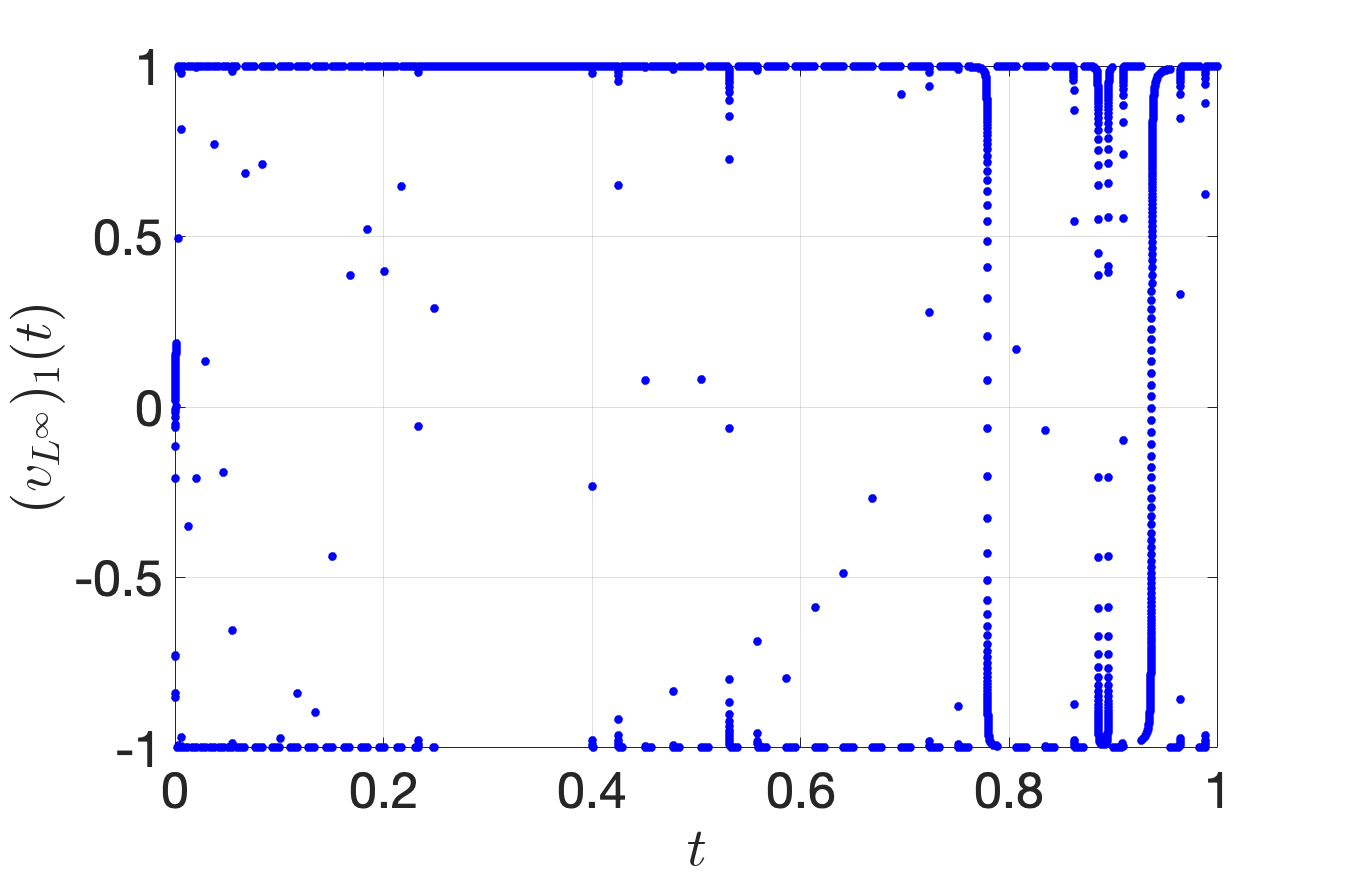} \\[3mm]
(a)
\end{center}
\end{minipage}
\begin{minipage}{80mm}
\begin{center}
\includegraphics[width=80mm]{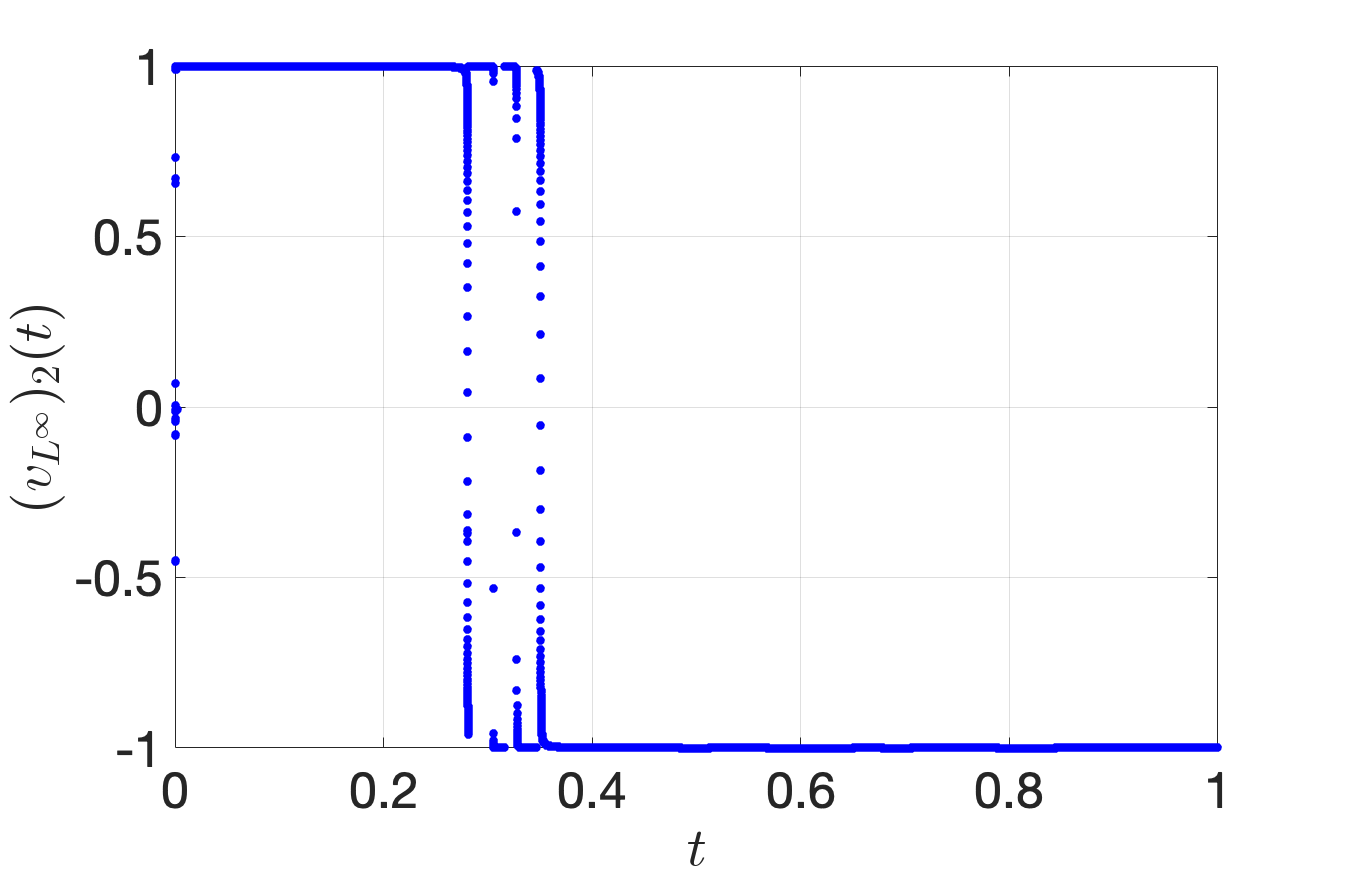} \\[3mm]
(b)
\end{center}
\end{minipage}
\\[10mm]
\begin{minipage}{80mm}
\begin{center}
\includegraphics[width=80mm]{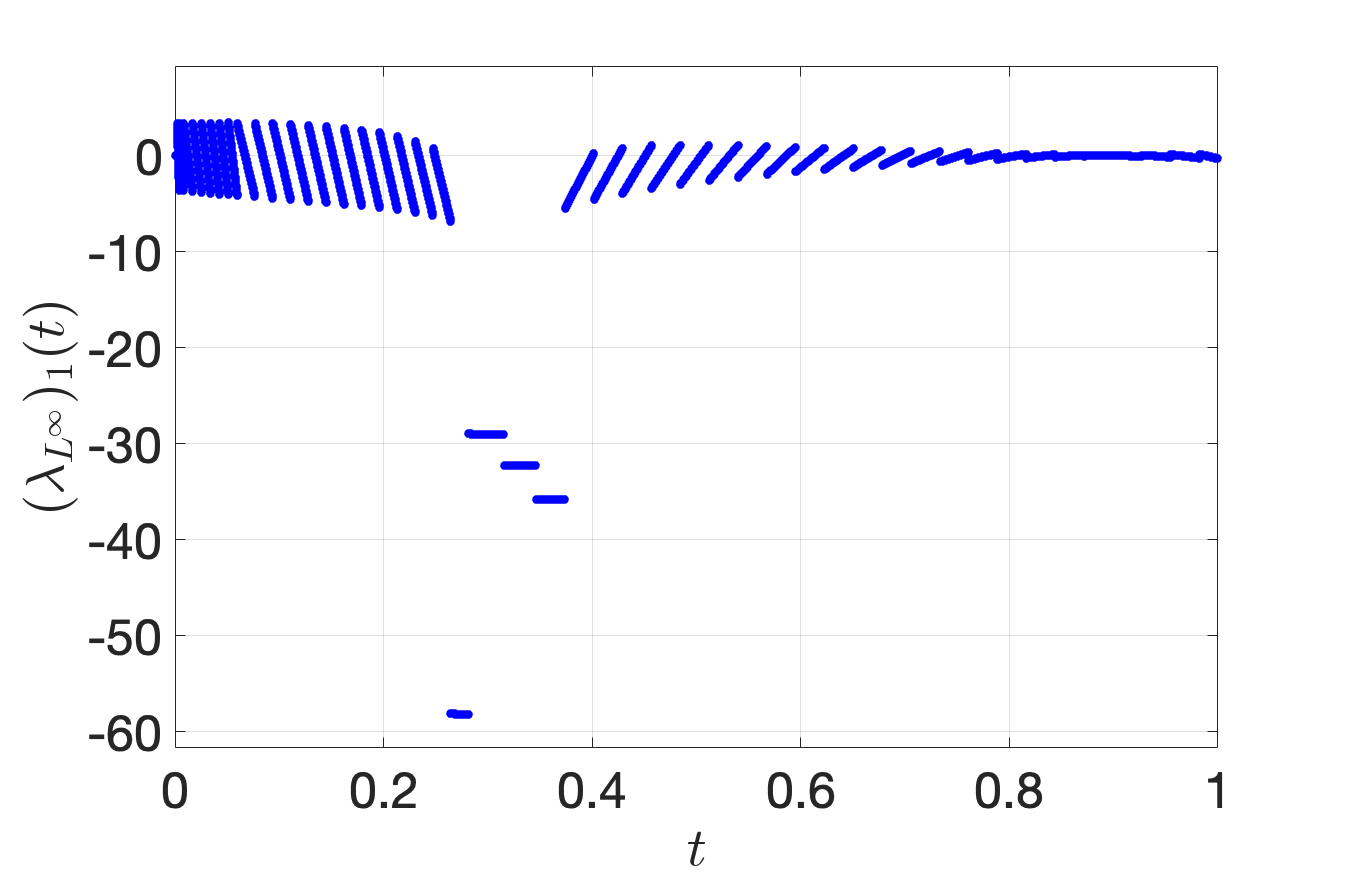} \\[3mm]
(c)
\end{center}
\end{minipage}
\begin{minipage}{80mm}
\begin{center}
\includegraphics[width=80mm]{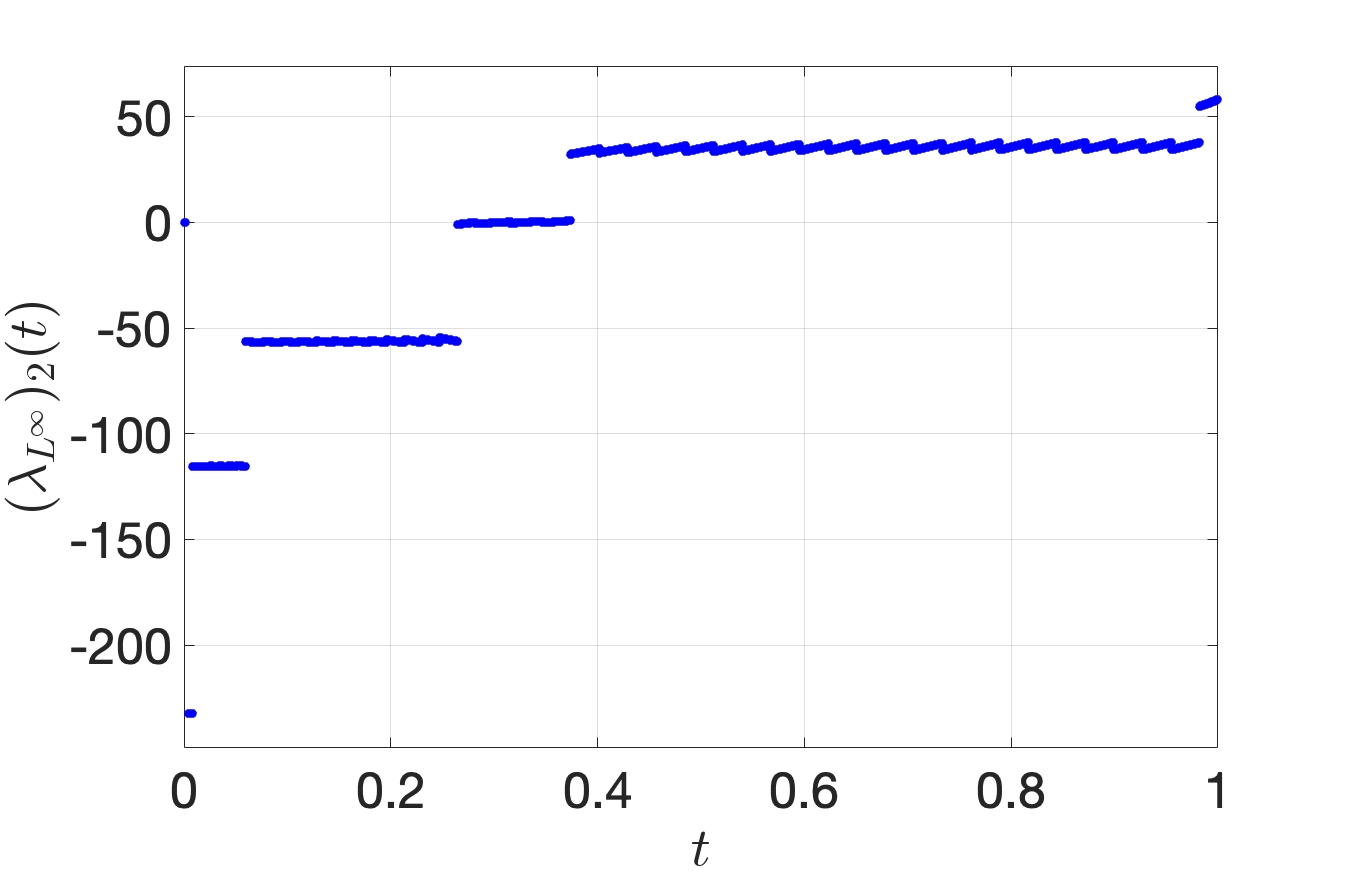} \\[3mm]
(d)
\end{center}
\end{minipage}
\
\caption{\sf Example 3 -- (a)--(b) control variable vector and (c)--(d) costate variable vector components, all for equation~\eqref{ode:vdp}, with $0\le t\le1$, $x(0)=(-1,-3)$, and {\sc Matlab}'s {\tt ode113}.}
\label{fig:ex3f}
\end{figure}

\newpage
\section{Discussion of Results\label{sec:discussion}}
In all the examples using \texttt{ode45}, we see that the  minimizing residuals (either $L^2$ or $L^\infty$) are never larger than those produced by the polynomial interpolants used by \texttt{deval}, and are in some cases a hundred times smaller. This independent confirmation of the quality of the computed skeleton of the solution to the differential equation provides a useful reassurance that the code has done its job. In the case of \texttt{ode113} and \texttt{ode15s}, where the polynomial approximations are not continuous, the residuals computed thereby are not reliable; the examples here show that (except when \texttt{ode113} accidentally got the exact solution) the true residual can be larger than that computed using polynomials.

The fact that the optimal residuals are not continuous is perhaps disconcerting.  They come from a piecewise continuous, but not piecewise continuously differentiable, interpolant.  We point out that requiring the interpolant to be continuously differentiable is usually not possible: if the interpolant is continuously differentiable, then it will almost always be possible to find another interpolant with a slightly smaller residual. That is, the solution to the minimization problem will not actually \textsl{exist}, if we insist on too much continuity. This is a familiar fact, discussed in many textbooks, for example~\cite{young1969lectures}.  Nonetheless, the modeller may not be expecting discontinuous inputs to the right-hand side of the model differential equations, and be happier with the larger residuals produced by smoother interpolants.

A further refinement of this idea is possible, as mentioned in~\cite{CorJan2016}: one might compute a ``modified equation'' to account for the largest part of the residual, and then use the techniques of this present paper to find the best interpolant that fits that modified equation.  If we follow that process, the discontinuous residual will typically be an order of magnitude smaller.

For clarity, here is an example.  Suppose we are solving $\dot y = F_f(y)$ by using the parameterized 2nd order Runge--Kutta method
\begin{align*} \begin{array}{c|cc}
     0\; \\
     \sfrac{1}{2a}\; & \;\sfrac{1}{2a}\; \\
     \hline
         & 1-a & a
   \end{array} \end{align*}
This is a 2nd order Runge--Kutta method for any choice of $a \ne 0$.

Using the \texttt{BSeries.jl} package we mentioned previously, the modified equation is (to 2nd order) computed as follows.
\begin{lstlisting}
using BSeries
using Latexify
import SymPyPythonCall; sp = SymPyPythonCall;
a = sp.symbols("a", real = true)
A = [0 0; 1/(2*a) 0]; b = [1-a, a]; c = [0, 1/(2*a)]
coeffs2 = bseries(A, b, c, 3)
meq = modified_equation(coeffs2)
\end{lstlisting}
The output of the above equation has to be divided by $h$ (because of the conventions of the code normalization) in order to construct the right-hand side of the modified equation below.  Again, we use a common notation for elementary differentials using trees.
\begin{equation}
    \dot y = F_{f}\mathopen{}\left( \rootedtree[.] \right)\mathclose{} - \frac{ 1 }{6} h^{2} F_{f}\mathopen{}\left( \rootedtree[.[.[.]]] \right)\mathclose{} + h^{2} \left( \frac{-1}{6} + \frac{1}{8 a} \right) F_{f}\mathopen{}\left( \rootedtree[.[.][.]] \right)\mathclose{} + O(h^3)
    \label{eq:Ralstonmodified}
\end{equation}
If we take the skeleton and minimize the residual not for $\dot y = F_f$ but rather in the above equation~\eqref{eq:Ralstonmodified}, then we would find that the minimal residual would be $O(h^3)$ in size rather than $O(h^2)$.  That is, the largest part of the residual would be explained by the method of modified equations, and only a minor part would contain discontinuities (and its magnitude, if small, would be a guarantee of fidelity to the model).

One is tempted to choose $a=3/4$ (this is \textsl{Ralston's method}) which removes one term from the modified equation above, because that would make the example simpler.  But the purpose of this paper is not to \textsl{improve} numerical methods but rather to provide tools from control theory that would \textsl{explain their success}. For that, the choice of parameter makes no difference.
\section{The Lorenz Equations Again}
If we use the explicit trapezoidal rule, as Lorenz did, to solve the Lorenz equations on (say) $0 \le t \le T = 27.5$ which is long enough so that the solution is appreciably in the attracting set, we can look at a \textsl{sample} residual, say on a single time-step $[T-h, T]$.  Because the method is only second order accurate, the cubic Hermite interpolant that fits the known values $y_{N-1}$ and $y_N$ at $t_{N-1} = T-h$ and $t_N = T$, and also fits the known derivatives $F_f(y_{N-1})$ and $F_f(y_{N})$ there, is accurate enough to give a decent estimate of the residual.
\begin{figure}
    \centering
    \subfigure[Final timestep $N=10^4$]{\includegraphics[width=0.45\linewidth]{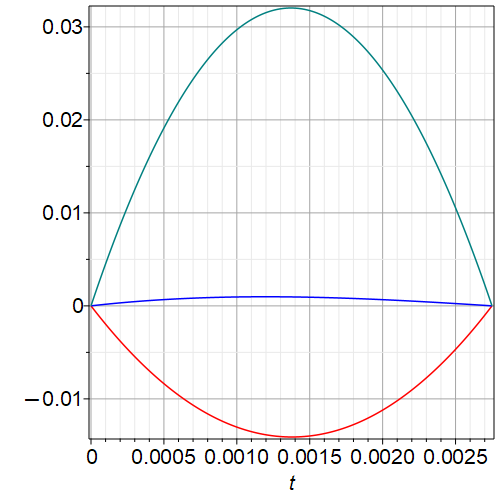}}
    \subfigure[Residual accuracy]{\includegraphics[width=0.45\linewidth]{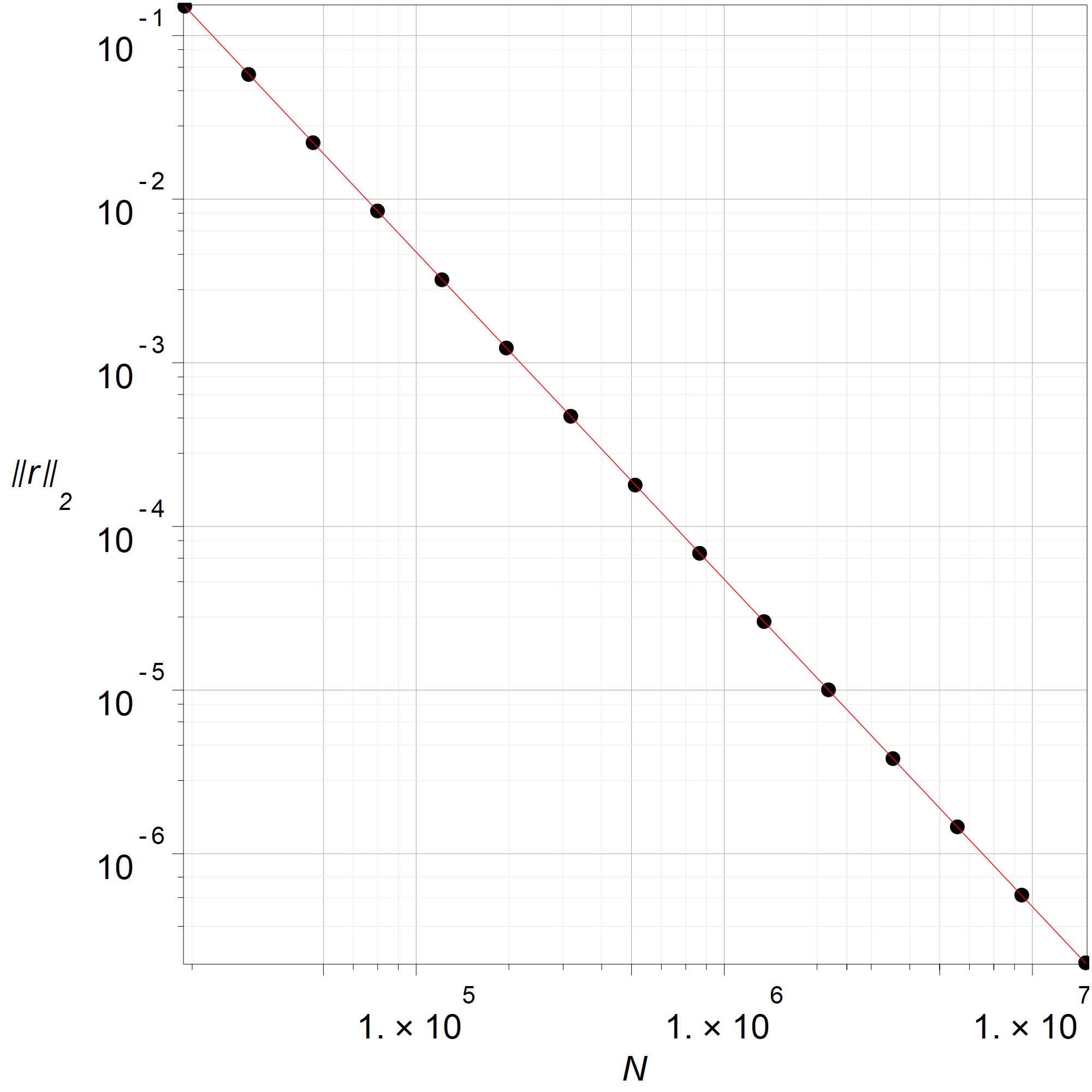}}
    \caption{(left) The residual using cubic Hermite interpolation to the solution of the Lorenz system with $\sigma = 10 $, $\rho = 28$, and $\beta = 8/3$, and a time-step $h=0.00275$ (or $10,000$ steps), on the final subinterval $[T-h,T]$ where $T=27.5$ (the axis is labelled, equivalently, as $[0,h]$). The residual $r_x = \dot{x}- \sigma(y-x)$ for $x$ is in red, the residual $r_y$ is in blue, and the residual $r_z$ is teal-coloured. The maximum value of $\sqrt{r_x^2 + r_y^2 + r_z^2}$ occurs near $t= T-h/2$ and is approximately $0.035$. (right) The maximum $2$-norm $\sqrt{r_x^2 + r_y^2 + r_z^2}$ of the sampled residuals over $0 \le t \le T$ for various $h=T/N$. Samples were taken at $t = (k+1/2)h$ for $0 \le k \le N-1$, and the maximum was recorded for each $N$. The red line is $C/N^2$ for $C = 4.75\times 10^7$, which reflects the size of the second derivatives of the Lorenz equations.}
    \label{fig:Lorenzresidualoneinterval}
\end{figure}
We see in Figure~\ref{fig:Lorenzresidualoneinterval} that the maximum residual with this interpolant occurs very nearly in the middle of the interval.  We can sample that and compute the norm (say the $2$-norm) of all three residuals, and do so for a sequence of decreasing $h$.  When we do that---see Figure~\ref{fig:Lorenzresidualoneinterval}---we find that as expected the size of the residual decays like $C/N^2 = O(h^2)$, although the constant $C$ is quite large, being about $4.75\times 10^7$. This is, however, simply reflective of the size of the second derivative terms in equation~\eqref{eq:modifiedLorenz}.

With the cubic Hermite interpolant, we get an upper bound for the minimal residual.  The tools of this paper offer a way to find a minimal residual. When we do that for the interval shown in the left side of Figure~\ref{fig:Lorenzresidualoneinterval}, we find that the $2$-norm $\sqrt{r_x^2 + r_y^2 + r_z^2}$ of the minimal residual is nearly constant across the interval, and is approximately $0.023$, which is about $66\%$ of the residual shown by the cubic Hermite interpolant.  Because it is a \textsl{minimal} residual, of course it must be smaller.  That the residual from the cubic Hermite interpolant is less than twice as large as the minimum possible shows that, for this method and this problem, cubic Hermite interpolants do quite a good job of assessing the quality of the numerical solution of the Lorenz equations by the trapezoidal rule.
It's a pity that cubic Hermite interpolants have too low an order to be useful for higher-order methods than $2$nd order.  There is a generalization to higher-order piecewise Hermite interpolation, which we call ``blends,'' that may be interesting for the purpose~\cite{Corless2023,corless2023ISSAC,corless2024hermite}, and this could be investigated in future work.

If we took the modified equation~\eqref{eq:modifiedLorenz} into account, we could show that the known $O(h^2)$ terms account for the majority of the residual. We could then bound the remaining $\delta(t)$ in equation~\eqref{eq:modifiedLorenz}. This might be important, depending on the context of the particular problem being solved.  See Figure~\ref{fig:Lorenzresidual} where we plot the three components of $r(t) = \dot z - F_f(z)$ against each other. The very clear correlation of the residual with the solution raises the spectre of feedback and resonance, and, again, depending on the context of the problem, one might wish reassurance that the potential resonance was completely understood.

The Maple worksheet where we computed these can be found at
\url{https://maple.cloud/app/4871518686674944/Solving+the+Lorenz+Equations+in+Maple}.
We computed the derivative of the cubic Hermite interpolant of each component $x(t)$, $y(t)$, and $z(t)$, and put $r_x(t) = \dot{x}(t) - \sigma(y(t)-x(t))$, $r_y(t) = \dot{y}(t) - (x(t)(\rho-z(t))-y(t))$, and $r_z(t) = \dot{z}(t) - (x(t)y(t)-\beta z(t))$.  For the left-hand side of Figure~\ref{fig:Lorenzresidualoneinterval} we plotted each of these across the interval $[T-h,T]$.  For the right hand side of that figure and for Figure~\ref{fig:Lorenzresidual} we only sampled the residual at one point in each timestep $[t_n, t_{n+1}]$, namely the half-way point. That is, we evaluated cubic Hermite interpolants to $x(t)$, $y(t)$, and $z(t)$ at $t = t_n + h/2$ on each step, together with their derivatives $\dot{x}(t)$, $\dot{y}(t)$, and $\dot{z}(t)$, which allowed evaluation of the residual close to where it is of maximum size for the cubic Hermite interpolant. We then plotted all the samples. For the right-hand side of Figure~\ref{fig:Lorenzresidualoneinterval} we computed the maximum 2-norm of the sampled residuals $\sqrt{ r_x^2 + r_y^2 + r_z^2}$ on $0 \le t \le T$ and recorded that, for runs with $N = F_{22}=17,711$, $F_{23} = 28,657$, $F_{24} = 46,368$, and so on up to $N=F_{36} = 14,930,352$.
The numbers $F_k$ are
Fibonacci numbers, which make for nice spacing on a log-log plot. That the residuals decay as $N$ increases indicates that we are solving systems closer and closer to the Lorenz system.  Of course, since the system is chaotic, the individual trajectories do not converge (except in a very short initial time interval).

In the interest of convenience for the reader, the cubic Hermite interpolant to a function taking on values $y_n$ and $y_{n+1}$ at each end of a step of width $h$, with derivative values $f(y_n)$ and $f(y_{n+1})$ at the corresponding ends, is
\begin{equation}
\left(\left(1+2 s \right)y_n +s h f(y_n) \right) \left(1-s \right)^{2}+\left(\left(3-2 s \right)y_{n+1} -\left(1-s \right) f(y_{n+1}) h \right) s^{2}
\end{equation}
where $s=(t-t_n)/h$.  From here one gets the simple formulae for the value of the function and derivative at the half-way point, which one can use to estimate the residual.
\begin{align}
    y_{n+1/2} &= \frac{1}{2}\left( y_n + y_{n+1} \right) - \frac{h}{8}\left( f(y_{n+1}) - f(y_n) \right) \\
    \dot{y}_{n+1/2} &= \frac32\left( \frac{y_{n+1}-y_n}{h}\right) - \frac14\left( f(y_n) + f(y_{n+1}) \right)\>.
\end{align}

What we get here is \textsl{not} the minimal residual advocated in this paper, but rather an upper bound for that minimal residual\footnote{This upper bound is relatively simple to compute, because the method is such a low-order method.  Cubic Hermite interpolation requires no extra function evaluations.  Higher-order methods, and more sophisticated methods with many heuristics and ``magic constants'' would mandate more effort, such as is advocated in this paper.}.  It obviously would require more effort to compute such a minimal residual, and the value of that effort would strongly depend on the context of the problem.  What we wished to convey by this example is that it's believable that for more complicated problems such effort could well be worthwhile.

\begin{figure}
    \centering
    \includegraphics[width=0.5\linewidth]{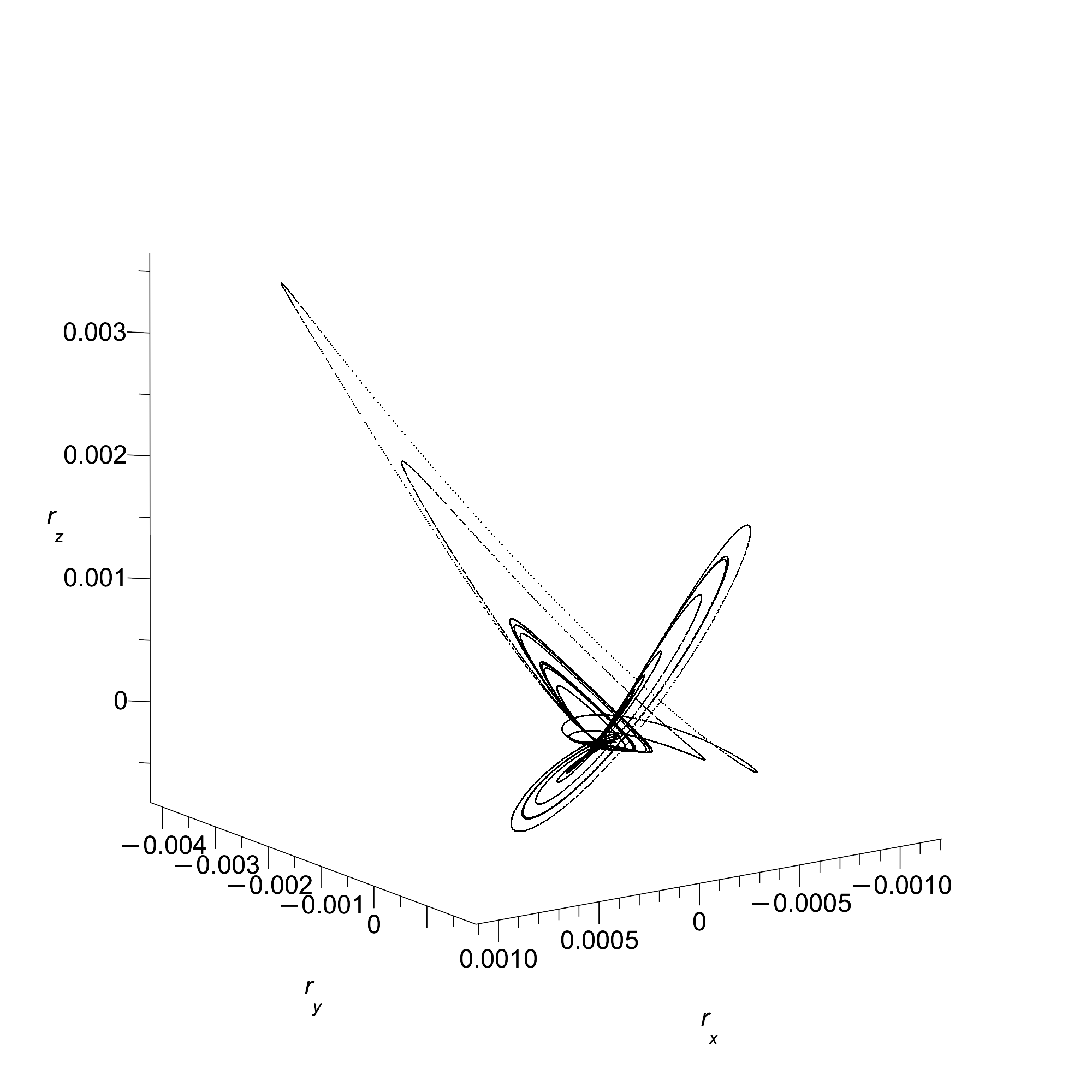}
    \caption{The residuals sampled at the half-step, with $N=10^5$ steps taken on $0 \le t \le 27.5$, of the explicit trapezoidal rule on the Lorenz equations.  We see a clear correlation with the solution of the Lorenz equations.}
    \label{fig:Lorenzresidual}
\end{figure}
\section{Concluding Remarks}
\label{sec:conclusion}

We passed several remarks about so-called ``na{\"\i}ve users'' of modern codes for solving IVPs for ODEs numerically, saying that the users couldn't be expected to understand just what the codes were doing.  This is of course an oversimplification; yes, modern users are busy people and focused on what the results of their simulations mean, rather than on the intricacies of the basic and well-tested codes they are using in their simulations. Yes, they should know how to use modern codes, which are excellent and efficient.  See in particular the Julia package by Rackauckas and Nie~\cite{rackauckas2017differentialequations}, which we find especially impressive. But it seems somehow unfair to force users to learn \textsl{all} the details of how the codes work: the codes should just work as the users expect them to.

Unfortunately, the codes do \textsl{not} always work as users expect them to, even though the documentation is usually clear.  {One issue is, we believe, that \textsl{local error} and its relation to \textsl{forward error} (also known as ``global error'') is not as widely understood as it should be. Another and perhaps more significant issue is that the theory of error is true only asymptotically as the (mean) stepsize $h$ goes to zero, while for efficiency the code tries to make $h$ as large as possible consistent with the tolerances.  This antinomy is managed by heuristics and ``magic constants'' in many codes.}
A further issue is that the language used to discuss the polynomial approximations provide by \texttt{deval} sometimes uses the words ``interpolant'' and sometimes ``continuous extension.''  These words are sometimes incorrect, strictly speaking, and can be misleading.  The words ``dense output'' is sometimes used, and this is more accurate, but its subtleties could easily be overlooked.

In cases where lives or a lot of money are involved, and greater surety of the correctness of the simulation is needed, it is incumbent on the user to be aware of that fact, and to check the results.  There are many tools already in existence for doing so.  Examples include using more reliable (continuously differentiable) interpolants~\cite{EnrJacNorTho1986} and computing residuals using those\footnote{Re-reading that paper, we see that the task we set ourselves in this present paper is at least forty years old.  Quoting from their section 5, Applications of Interpolation: ``Interpolants can also be employed to improve the reliability of a code. One
way this can be done is to use an interpolant to produce an estimate of the defect
$z'(x) - f(x, z(x))$ in the numerical solution, where $z$ interpolates the computed
values $\{y_n\}$. The size of the defect is frequently a good way to characterize the
accuracy of the numerical solution.
Ideally, we would like to choose $z$ to minimize some norm of the defect.
Although none of the interpolants discussed in this paper satisfy this ideal (as
far as we know), the defect associated with the higher-order interpolant $z_n^p$ is
generally not much larger than the minimum.''}. An alternative would be to use a different method, which controlled the residual directly~\cite{Enright2000}.  This paper provides even more tools, which may be familiar to people who know some control theory.

To be explicit, the methods of this paper can
\begin{enumerate}
    \item identify when the dense output supplied by the code gives too \textsl{large} a residual (e.g. for \lstinline{ode45})
    \item identify when the dense output supplied by the code gives too \textsl{small} an estimate for the residual (e.g. for \lstinline{ode113})
    \item Identify when a custom interpolant (e.g. cubic Hermite interpolation for the trapezoidal rule) does a good job
    \item supply reassurance by sampling residuals at strategic places that the code has done a good job of giving the exact solution to a nearby problem.
\end{enumerate}
One reason to want such reassurance is that modern codes
can be quite complex, and the interaction of high-order interpolants, stepsize control methods, and hard-coded limits on such controls, is nonlinear and can be difficult to analyze on a large and complex problem.
We admit that some of the value of the methods of this paper would come from the user's familiarity with the methods of control theory, and indeed carrying out the program may require access to commercial optimization tools such as AMPL.

The method we advocate here will guarantee that the code has solved a ``nearby problem'' and quantify just how near.  Of course, one has to understand the effects of changing the problem to a nearby one, perhaps by using the Gr\"obner--Alexeev nonlinear variation of constants formula, or studying the sensitivity of the model to changes in another way, but this has to be done even if one has the exact analytical solution.  See~\cite{CorFil2013} for more discussion of this point.

We re-iterate that we used the stage $L^\infty$-norm mainly because we were able to obtain analytical expressions for the residual in Examples 1 and 2.  With the true $L^\infty$-norm over the whole interval $[t_0,t_f]$, this advantage is lost, and the residual can only be obtained numerically.   The multi-stage or multi-process optimal control setting we have used in the present paper will already facilitate the means to compute the residual.  The (true) $L^\infty$-minimization should be expected to result in residuals which are better than those obtained by the stage $L^\infty$-minimization.

{\color{black} $L^\infty$-minimization has resulted in numerical experiments where there appears to be at most one switching in each optimal control stage. Therefore, another improvement in stage $L^\infty$-minimization could be achieved by parametrizing the problem with respect to switching times and thus computing the residuals more accurately---see~\cite{KayNoaSch2024} and the references therein.  This would also allow one to use higher order discretization schemes in intervals between consecutive switchings, as in~\cite{KayNoaSch2024}.}

As a final remark, we see in~\cite{Corless1994} (a paper from more than thirty years ago) a similar idea in embryo.  Only $L^2$ minimization was considered in that paper, and only sketched for a simple example.  But in one respect that paper advocates something that we have not done here, namely to include other terms in the equation which are motivated physically in order to interpret some aspects of the numerical solution.  The example used there involved numerical solution of the simple harmonic oscillator $\ddot y + y = 0$ and in the analysis thereof included an artificial damping term $2\zeta \dot y$ with an unknown damping $\zeta$. The paper suggested minimizing the residual with an optimal $\zeta$ in order to interpret the numerics.  We note that this can now be done more generally using the methods of this present paper.

\textbf{Acknowledgements}
We are grateful to David Ketcheson and Hendrik Ranocha for help with the B-series package.  We thank the author of~\cite{Stepanov2022} for putting the \LaTeX\ source for that paper on arXiv, which allowed us to avoid much of the effort of constructing the Butcher tableau for the Dormand--Prince RK (5,4) pair for ourselves. This, in addition for the work of the paper itself which we found helpful. We also thank a sharp-eyed referee who caught an omission (and a duplicate row) in our printing of that tableau, even so.  This work was partially supported by NSERC grant RGPIN-2020-06438.


\begin{thebibliography}{10}

\bibitem{AscMatRus1995}
Uri~M. Ascher, Robert M.~M. Mattheij, and Robert~D. Russell.
\newblock {\em Numerical Solution of Boundary Value Problems for Ordinary
  Differential Equations}.
\newblock Society for Industrial and Applied Mathematics, January 1995.
\newblock URL: \url{http://dx.doi.org/10.1137/1.9781611971231}.

\bibitem{AugMau2000}
Dirk Augustin and Helmut Maurer.
\newblock Second order sufficient conditions and sensitivity analysis for
  optimal multiprocess control problems.
\newblock {\em Control and Cybernetics}, 29(1):11--31, 2000.
\newblock URL: \url{{https://bibliotekanauki.pl/articles/206723}}.

\bibitem{Betts2020}
John~T. Betts.
\newblock {\em Practical Methods for Optimal Control Using Nonlinear
  Programming, Third Edition}.
\newblock Society for Industrial and Applied Mathematics, January 2020.
\newblock URL: \url{http://dx.doi.org/10.1137/1.9781611976199}.

\bibitem{BurKayMou2024}
Regina~S. Burachik, C.~Yal\c{c}{\i}n Kaya, and Walaa~M. Moursi.
\newblock Infeasible and critically feasible optimal control.
\newblock {\em Journal of Optimization Theory and Applications},
  203(2):1219–1245, April 2024.
\newblock URL: \url{http://dx.doi.org/10.1007/s10957-024-02419-0}.

\bibitem{Butcher1987}
John~C. Butcher.
\newblock {\em {Numerical Methods for Ordinary Differential Equations}}.
\newblock Wiley, July 2016.
\newblock URL: \url{http://dx.doi.org/10.1002/9781119121534}.

\bibitem{butcher2021b}
John~C Butcher.
\newblock {\em B-series: algebraic analysis of numerical methods}, volume~55.
\newblock Springer, 2021.

\bibitem{Knitro}
Richard~H. Byrd, Jorge Nocedal, and Richard~A. Waltz.
\newblock {\em Knitro: An Integrated Package for Nonlinear Optimization}, page
  35–59.
\newblock Springer US, 2006.
\newblock URL: \url{http://dx.doi.org/10.1007/0-387-30065-1_4}.

\bibitem{calver2017numerical}
Jonathan Calver and Wayne Enright.
\newblock {Numerical methods for computing sensitivities for ODEs and DDEs}.
\newblock {\em Numerical Algorithms}, 74(4):1101--1117, 2017.

\bibitem{ClaVin1989}
Frank~H. Clarke and Richard~B. Vinter.
\newblock Applications of optimal multiprocesses.
\newblock {\em SIAM Journal on Control and Optimization}, 27(5):1048–1071,
  September 1989.
\newblock URL: \url{http://dx.doi.org/10.1137/0327056}.

\bibitem{Corless2023}
Robert Corless.
\newblock Blends have decent numerical properties.
\newblock {\em Maple Transactions}, 3(1), February 2023.
\newblock URL: \url{http://dx.doi.org/10.5206/mt.v3i1.15890}, \href
  {https://doi.org/10.5206/mt.v3i1.15890} {\path{doi:10.5206/mt.v3i1.15890}}.

\bibitem{corless2023ISSAC}
Robert Corless.
\newblock Blendstrings: an environment for computing with smooth functions.
\newblock In {\em Proceedings of the 2023 International Symposium on Symbolic
  and Algebraic Computation}, ISSAC '23, page 199–207, New York, NY, USA,
  2023. Association for Computing Machinery.
\newblock \href {https://doi.org/10.1145/3597066.3597117}
  {\path{doi:10.1145/3597066.3597117}}.

\bibitem{Corless1994}
Robert~M. Corless.
\newblock Error backward.
\newblock In P.~Kloeden and K.~Palmer, editors, {\em Proceedings of Chaotic
  Numerics, Geelong, 1993}, volume 172 of {\em AMS Contemporary Mathematics},
  pages 31--62, 1994.
\newblock URL: \url{https://hdl.handle.net/20.500.14721/38912}.

\bibitem{corless2000elementary}
Robert~M. Corless.
\newblock An elementary solution of a minimax problem arising in algorithms for
  automatic mesh selection.
\newblock {\em ACM SIGSAM Bulletin}, 34(4):7--15, 2000.
\newblock URL: \url{https://doi.org/10.1145/377626.377633}.

\bibitem{corless2002new}
Robert~M. Corless.
\newblock {A new view of the computational complexity of IVP for ODE}.
\newblock {\em Numerical Algorithms}, 31(1):115--124, 2002.
\newblock URL: \url{https://doi.org/10.1023/A:1021108323034}.

\bibitem{corless2024hermite}
Robert~M Corless.
\newblock {An Hermite--Obreshkov method for 2nd-order linear initial-value
  problems for ODE: with special attention paid to the Mathieu equation.}
\newblock {\em Numerical Algorithms}, 96(3):1109--1141, 2024.

\bibitem{corless2025numerical}
Robert~M Corless.
\newblock {Numerical methods for Chaotic ODE}.
\newblock {\em arXiv preprint arXiv:2501.06123}, 2025.

\bibitem{CorFil2013}
Robert~M. Corless and Nicolas Fillion.
\newblock {\em {A Graduate Introduction to Numerical Methods: From the
  Viewpoint of Backward Error Analysis}}.
\newblock Springer New York, 2013.
\newblock URL: \url{http://dx.doi.org/10.1007/978-1-4614-8453-0}.

\bibitem{CorHarJefKnu96}
Robert~M. Corless, Gaston~H. Gonnet, David E.~G. Hare, David~J. Jeffrey, and
  Donald~E. Knuth.
\newblock On the {Lambert $W$} function.
\newblock {\em Advances in Computational Mathematics}, 5(1):329–359, December
  1996.
\newblock URL: \url{http://dx.doi.org/10.1007/BF02124750}.

\bibitem{CorJan2016}
Robert~M. Corless and Julia~E. Jankowski.
\newblock Variations on a theme of {E}uler.
\newblock {\em SIAM Review}, 58(4):775–792, January 2016.
\newblock URL: \url{http://dx.doi.org/10.1137/15M1032351}.

\bibitem{CorKayMoi2019}
Robert~M. Corless, C.~Yal\c{c}{\i}n Kaya, and Robert H.~C. Moir.
\newblock Optimal residuals and the {D}ahlquist test problem.
\newblock {\em Numerical Algorithms}, 81(4):1253–1274, November 2018.
\newblock URL: \url{http://dx.doi.org/10.1007/s11075-018-0624-x}.

\bibitem{DmiKag2011}
Andrei~V. Dmitruk and Alexander~M. Kaganovich.
\newblock Maximum principle for optimal control problems with intermediate
  constraints.
\newblock {\em Computational Mathematics and Modeling}, 22(2):180–215, April
  2011.
\newblock URL: \url{http://dx.doi.org/10.1007/s10598-011-9096-8}.

\bibitem{Dormand1980}
J.R. Dormand and P.J. Prince.
\newblock {A family of embedded Runge--Kutta formulae}.
\newblock {\em Journal of Computational and Applied Mathematics}, 6(1):19–26,
  March 1980.
\newblock URL: \url{http://dx.doi.org/10.1016/0771-050X(80)90013-3}, \href
  {https://doi.org/10.1016/0771-050x(80)90013-3}
  {\path{doi:10.1016/0771-050x(80)90013-3}}.

\bibitem{Enright2000}
Wayne~H. Enright.
\newblock Continuous numerical methods for {ODE}s with defect control.
\newblock {\em Journal of Computational and Applied Mathematics},
  125(1--2):159–170, December 2000.
\newblock URL: \url{http://dx.doi.org/10.1016/S0377-0427(00)00466-0}.

\bibitem{Enright}
Wayne~H. Enright.
\newblock Lecture notes on the numerical solution of {ODE}s.
\newblock \url{http://www.cs.toronto.edu/~enright/teaching/CSC2302/IVP.pdf},
  2014.

\bibitem{EnrJacNorTho1986}
Wayne~H. Enright, Kenneth~R. Jackson, Syvert~P. N\o{}rsett, and Per~G. Thomsen.
\newblock Interpolants for {Runge--Kutta} formulas.
\newblock {\em ACM Trans. Math. Softw.}, 12(3):193–218, September 1986.
\newblock URL: \url{https://doi.org/10.1145/7921.7923}.

\bibitem{AMPL}
Robert Fourer, David~M Gay, and Brian~W Kernighan.
\newblock {AMPL}.
\newblock \url{https://ampl.com/resources/books/ampl-book/}, 2003.

\bibitem{HaiWan1996}
Ernst Hairer and Gerhard Wanner.
\newblock {\em {Solving Ordinary Differential Equations II:Stiff and
  Differential Algebraic Problems}}.
\newblock Springer Berlin Heidelberg, 1996.
\newblock URL: \url{http://dx.doi.org/10.1007/978-3-642-05221-7}.

\bibitem{HaiNorWan1993}
Ernst Hairer, Gerhard Wanner, and Syvert~P N{\o}rsett.
\newblock {\em {Solving Ordinary Differential Equations I: Nonstiff Problems}}.
\newblock Springer, 1993.
\newblock URL: \url{https://books.google.ca/books?id=F93u7VcSRyYC}.

\bibitem{HegKay2025}
Markus Hegland and C.~Yal\c{c}{\i}n Kaya.
\newblock Probability density estimation via optimal control.
\newblock \url{https://arxiv.org/abs/2510.00835}, 2025.

\bibitem{ilie2008adaptivity}
Silvana Ilie, Gustaf S{\"o}derlind, and Robert~M. Corless.
\newblock Adaptivity and computational complexity in the numerical solution of
  {ODE}s.
\newblock {\em Journal of Complexity}, 24(3):341--361, 2008.
\newblock URL: \url{https://doi.org/10.1016/j.jco.2007.11.004}.

\bibitem{Matlab}
The~MathWorks Inc.
\newblock Matlab version 24.2.0.2712019 (r2024b), 2024.
\newblock URL: \url{https://www.mathworks.com}.

\bibitem{Kaya2019}
C.~Yal\c{c}{\i}n Kaya.
\newblock Markov--{D}ubins interpolating curves.
\newblock {\em Computational Optimization and Applications}, 73(2):647–677,
  February 2019.
\newblock URL: \url{http://dx.doi.org/10.1007/s10589-019-00076-y}.

\bibitem{Kaya2022}
C.~Yal\c{c}{\i}n Kaya.
\newblock Observer path planning for maximum information.
\newblock {\em Optimization}, 71(4):1097–1116, December 2022.
\newblock URL: \url{http://dx.doi.org/10.1080/02331934.2021.2011868}.

\bibitem{KayMau2014}
C.~Yal\c{c}{\i}n Kaya and Helmut Maurer.
\newblock A numerical method for nonconvex multi-objective optimal control
  problems.
\newblock {\em Computational Optimization and Applications}, 57(3):685--702,
  September 2014.
\newblock URL: \url{http://dx.doi.org/10.1007/s10589-013-9603-2}.

\bibitem{KayNoa2013}
C.~Yal\c{c}{\i}n Kaya and J.~Lyle Noakes.
\newblock Finding interpolating curves minimizing {$L^\infty$} acceleration in
  the {E}uclidean space via optimal control theory.
\newblock {\em SIAM Journal on Control and Optimization}, 51(1):442–464,
  January 2013.
\newblock URL: \url{http://dx.doi.org/10.1137/12087880X}.

\bibitem{KayNoaSch2024}
C.~Yal\c{c}{\i}n Kaya, Lyle Noakes, and Philip Schrader.
\newblock Curves of minimax spirality.
\newblock {\em ESAIM: Control, Optimisation and Calculus of Variations}, 32:6,
  2026.
\newblock URL: \url{http://dx.doi.org/10.1051/cocv/2025095}, \href
  {https://doi.org/10.1051/cocv/2025095} {\path{doi:10.1051/cocv/2025095}}.

\bibitem{ketcheson2023computing}
David~I Ketcheson and Hendrik Ranocha.
\newblock Computing with {B}-series.
\newblock {\em ACM Transactions on Mathematical Software}, 49(2), 06 2023.
\newblock \href {https://arxiv.org/abs/2111.11680} {\path{arXiv:2111.11680}},
  \href {https://doi.org/10.1145/3573384} {\path{doi:10.1145/3573384}}.

\bibitem{Lorenz1963}
Edward~N. Lorenz.
\newblock {Deterministic Nonperiodic Flow}.
\newblock {\em Journal of the Atmospheric Sciences}, 20(2):130--141, March
  1963.
\newblock URL:
  \url{http://dx.doi.org/10.1175/1520-0469(1963)020<0130:DNF>2.0.CO;2}, \href
  {https://doi.org/10.1175/1520-0469(1963)020<0130:dnf>2.0.co;2}
  {\path{doi:10.1175/1520-0469(1963)020<0130:dnf>2.0.co;2}}.

\bibitem{NocWri2006}
Jorge Nocedal and Stephen~J Wright.
\newblock {\em Numerical Optimization}.
\newblock Springer New York, 2006.
\newblock URL: \url{http://dx.doi.org/10.1007/978-0-387-40065-5}.

\bibitem{rackauckas2017differentialequations}
Christopher Rackauckas and Qing Nie.
\newblock Differentialequations. jl--a performant and feature-rich ecosystem
  for solving differential equations in {J}ulia.
\newblock {\em Journal of open research software}, 5(1):15--15, 2017.
\newblock URL:
  \url{https://openresearchsoftware.metajnl.com/articles/10.5334/jors.151}.

\bibitem{ranocha2021bseries}
Hendrik Ranocha and David~I Ketcheson.
\newblock {BSeries.jl}: {C}omputing with {B}-series in {J}ulia.
\newblock \url{https://github.com/ranocha/BSeries.jl}, 09 2021.
\newblock \href {https://doi.org/10.5281/zenodo.5534602}
  {\path{doi:10.5281/zenodo.5534602}}.

\bibitem{shampine1985interpolation}
Lawrence~F Shampine.
\newblock {Interpolation for Runge--Kutta methods}.
\newblock {\em SIAM journal on Numerical Analysis}, 22(5):1014--1027, 1985.
\newblock URL: \url{https://epubs.siam.org/doi/pdf/10.1137/0722060}.

\bibitem{Shampine1986}
Lawrence~F. Shampine.
\newblock Some practical {Runge--Kutta} formulas.
\newblock {\em Mathematics of Computation}, 46(173):135–150, 1986.
\newblock URL: \url{http://dx.doi.org/10.1090/S0025-5718-1986-0815836-3}.

\bibitem{ShaRei1997}
Lawrence~F Shampine and Mark~W Reichelt.
\newblock The {Matlab ODE} suite.
\newblock {\em SIAM Journal on Scientific Computing}, 18(1):1--22, 1997.
\newblock URL: \url{https://epubs.siam.org/doi/10.1137/S1064827594276424}.

\bibitem{Sparrow:1982}
Colin Sparrow.
\newblock {\em The {L}orenz equations: bifurcations, chaos, and strange
  attractors}, volume~41.
\newblock Springer-Verlag, New York, 1982.

\bibitem{Stepanov2022}
Misha Stepanov.
\newblock Embedded (4,5) pairs of explicit 7-stage {Runge--Kutta} methods with
  {FSAL} property.
\newblock {\em Calcolo}, 59(4), November 2022.
\newblock URL: \url{http://dx.doi.org/10.1007/s10092-022-00486-1}.

\bibitem{Stetter1973}
Hans~J. Stetter.
\newblock {\em Analysis of Discretization Methods for Ordinary Differential
  Equations}.
\newblock Springer Berlin Heidelberg, 1973.
\newblock URL: \url{http://dx.doi.org/10.1007/978-3-642-65471-8}.

\bibitem{tucker2002rigorous}
Warwick Tucker.
\newblock {A rigorous ODE solver and Smale’s 14th problem}.
\newblock {\em Foundations of Computational Mathematics}, 2(1):53--117, 2002.

\bibitem{Young1966}
Charles Young.
\newblock Comments on {``Deterministic Nonperiodic Flow''}.
\newblock {\em Journal of the Atmospheric Sciences}, 23(5):628--629, September
  1966.
\newblock URL:
  \url{http://dx.doi.org/10.1175/1520-0469(1966)023<0628:CONF>2.0.CO;2}, \href
  {https://doi.org/10.1175/1520-0469(1966)023<0628:conf>2.0.co;2}
  {\path{doi:10.1175/1520-0469(1966)023<0628:conf>2.0.co;2}}.

\bibitem{young1969lectures}
Laurence~Chisholm Young.
\newblock {\em Lectures on the Calculus of Variations and Optimal Control
  Theory}.
\newblock W. B. Saunders, 1969.
\newblock URL: \url{https://bookstore.ams.org/CHEL/304}.

\end{thebibliography}

\end{document}